\newcommand{\varliminf}{\mathop{\underline{\lim}}}
\newcommand{\varlimsup}{\mathop{\overline{\lim}}}
\newtheorem{theorem}{Theorem}[section]
\newtheorem{lemma}[theorem]{Lemma}
\newtheorem{proposition}[theorem]{Proposition}
\newtheorem{corollary}[theorem]{Corollary}
\renewcommand{\P}{\bP}
\newcommand{\rf}{J}
\newcommand{\dks}{d_{\xi}}
\newcommand{\hks}{h_{\xi}}
\newcommand{\pmu}{p}
\newcommand{\bE}{\mathbb{E}}
\newcommand{\bN}{\mathbb{N}}
\newcommand{\bP}{\mathbb{P}}
\newcommand{\bR}{\mathbb{R}}
\newcommand{\bZ}{\mathbb{Z}}
\newcommand{\xvec}{\mathbf{x}}
\newcommand{\yvec}{\mathbf{y}}
\newcommand{\vvec}{\mathbf{v}}
\newcommand{\vvbar}{\bar{\mathbf v}}
\newcommand{\uvec}{\mathbf{u}}
\newcommand{\om}{\omega}
\newcommand{\e}{\varepsilon}
\renewcommand{\l}{\ell}
\newcommand{\lmgf}{M}
\newcommand{\wt}{\widetilde}
\newcommand{\Vvv}{\operatorname{\mathbb{V}ar}}
\newcommand{\ddd}{\displaystyle}
\begin{document}
\begin{frontmatter}

\title{Large deviation rate functions for the partition function in a log-gamma
distributed random~potential}
\runtitle{Large deviations for log-gamma polymer}

\begin{aug}
\author[A]{\fnms{Nicos} \snm{Georgiou}\corref{}\ead[label=e1]{georgiou@math.utah.edu}\ead[label=u2,url]{http://www.math.utah.edu/\textasciitilde georgiou}}
\and
\author[B]{\fnms{Timo} \snm{Sepp\"al\"ainen}\thanksref{t1}\ead[label=e3]{seppalai@math.wisc.edu}\ead[label=u1,url]{http://www.math.wisc.edu/\textasciitilde seppalai}}
\runauthor{N. Georgiou and T. Sepp\"al\"ainen}
\affiliation{University of Utah and University of Wisconsin--Madison}
\address[A]{Mathematics Department\\
University of Utah\\
JWB\\
155 S 1400 E\\
Salt Lake City, Utah 84112-0090\\
USA\\
\printead{e1}\\
\printead{u2}}
\address[B]{Mathematics Department\\
University of Wisconsin--Madison\\
Van Vleck Hall\\
480 Lincoln Dr.\\
Madison, Wisconsin 53706-1388\\
USA\\
\printead{e3}\\
\printead{u1}} 
\end{aug}

\thankstext{t1}{Supported in part by NSF Grant DMS-10-03651 and by the
Wisconsin Alumni Research Foundation.}

\received{\smonth{11} \syear{2011}}
\revised{\smonth{4} \syear{2012}}

%
\begin{abstract}
We study right tail large deviations of the logarithm of the
partition function for directed lattice paths in i.i.d. random
potentials. The main purpose is the derivation of explicit formulas for
the $1+1$-dimensional exactly solvable case with log-gamma distributed
random weights. Along the way we establish some regularity results for
this rate function for general distributions in arbitrary dimensions.
\end{abstract}

%
\begin{keyword}[class=AMS]
\kwd[Primary ]{60K37}
\kwd[; secondary ]{60K35}
\kwd{60F10}
\end{keyword}
\begin{keyword}
\kwd{Directed polymer in random environment}
\kwd{partition function}
\kwd{large deviations}
\kwd{random walk in random potential}
\end{keyword}

\end{frontmatter}

\section{Introduction}\label{intro}

We study a version of the model called \textit{directed polymer in a
random environment}
where a fluctuating path is coupled with a random environment.
This model was introduced in the statistical physics literature in
\cite{huse-henl} and early
mathematically rigorous work followed
in \cite{bolt-cmp-89,imbr-spen}. We consider directed paths in the
nonnegative orthant
$\bZ_+^d$ of the $d$-dimensional integer lattice. The paths are
allowed nearest-neighbor
steps oriented along the coordinate axes. A random weight $\om(\uvec)$
is attached to each lattice point $\uvec\in\bZ_+^{d}$.
Together the weights form the \textit{environment} $\om= \{
\om(\uvec)\dvtx
\uvec\in\bZ_+^{d}\}$.
The space of environments is denoted by $\Omega$.
$\bP$ is a probability measure on $\Omega$ under which
the weights $ \{\om(\uvec)\}$ are i.i.d. random variables.

For $\vvec,\uvec\in\bZ^d_+$ such that $\vvec\le\uvec$
(coordinatewise ordering),
the set of admissible paths from $\vvec$ to $\uvec$ with $\vert
\uvec- \vvec\vert_1= m$ is
%
\begin{eqnarray}\label{Pi1}
\Pi_{\vvec,\uvec} &=& \bigl\{ x_\centerdot= \{ \vvec=x_0,
x_1,\ldots, x_{m} = \uvec\}\dvtx \forall k, x_k
\in\bZ^d_+ \mbox{ and }
\nonumber\\[-8pt]\\[-8pt]
&&\hspace*{93pt} x_{k+1} - x_k \in\{\mathbf e_i\dvtx 1\le i\le d\}
\bigr\},
\nonumber
\end{eqnarray}
where $e_i $ is the $i$th standard basis vector of $\bR^d$.
The \textit{point-to-point partition function} is
%
\begin{equation}\label{parf}
Z_{\vvec, \uvec} = \sum_{ x_\centerdot\in\Pi_{\vvec, \uvec} }e^{\sum_{j=1}^m \om
(x_j)}.
\end{equation}
This is the normalization factor in the \textit{quenched polymer distribution}
%
\begin{equation}\label{queme}
Q_{\vvec, \uvec}(x_\centerdot) = Z_{\vvec, \uvec}^{-1} \prod
_{j=1}^{m}e^{\om(x_j)},
\end{equation}
which is a probability distribution on the paths in the set $\Pi_{\vvec, \uvec}$.
When paths start at the origin ($\vvec= \mathbf0$), we drop $\vvec$
from the notation;
$Z_{\uvec}=Z_{\mathbf0,\uvec}$ and $\Pi_{\uvec}=\Pi_{\mathbf
0,\uvec}$.
Note that the weight
at the starting point $x_0$ was not included in the sum in the exponent
in (\ref{parf}).
This makes no difference for the results.
Sometimes it is convenient to include this weight, and then we write
$Z^\square_{\vvec, \uvec} = e^{\om(\vvec)} Z_{\vvec, \uvec}$
where the superscript
$\square$ reminds us that all weights in the rectangle are included.

In the polymer model one typically studies fluctuations of the path and
fluctuations
of $\log Z_{\uvec}$. This paper considers only $\log Z_{\uvec}$. Specifically
our main object of interest is the right tail large deviation rate function
%
\begin{equation}\label{introJ}
\rf_{\uvec}(r) = -\lim_{n\rightarrow\infty} n^{-1}\log\bP\{ \log
Z_{\lfloor{n\uvec}\rfloor} \ge nr\}
\end{equation}
for $\uvec\in\bR_+^d$, $r\in\bR$. Throughout we denote the
floor of a vector as $\lfloor{n\yvec}\rfloor= (\lfloor
{ny_1}\rfloor, \lfloor{ny_2}\rfloor,\ldots,
\lfloor{ny_d}\rfloor)$.
This function $J$ exists very generally for superadditivity reasons,
and in Section
\ref{secreg} we establish some of its regularity properties.

The focus of the
paper is an exactly solvable case where $d=2$ and
$-\om(\uvec)$ is log-gamma distributed. By ``exactly solvable'' we
mean that
special properties of the log-gamma case permit explicit computations, such
as a formula for the limiting point-to-point free energy
%
\begin{equation}\label{p1}
p(\yvec) = \lim_{n\to\infty} n^{-1} \log Z_{\lfloor{n\yvec
}\rfloor} ,\qquad\mbox{$\bP$-a.s.}
\end{equation}
and fluctuation exponents \cite{sepp-poly}. In the same spirit, in
this paper we compute explicit
formulas for the rate function $J$ and other related quantities in the
context of
the $1+1$-dimensional log-gamma polymer.

One can also consider point-to-line partition functions over all
directed paths of a fixed
length. For $m\in\bN$ the partition function is defined by
%
\begin{equation}\label{totparf}
Z^{\mathrm{line}}_m= \sum_{\uvec\in\bZ_+^d\dvtx \vert\uvec\vert_1=m}
Z_{\uvec}.
\end{equation}
Due to the $n^{-1}\log$ in front, in the results we look at
$Z^{\mathrm{line}}_m$
behaves like the maximal $Z_{\uvec}$ over $ \vert\uvec\vert_1=m$.

Some comments are in order.

There are currently three known exactly solvable directed polymer
models, all
in $1+1$ dimensions: the two with a discrete aspect are (i) the log-gamma
model introduced in \cite{sepp-poly}, and
(ii) a
model introduced in \cite{oconn-yor-01} where
the random environment is a collection
of Brownian motions. Some fluctuation exponents were derived for the
second model
in \cite{sepp-valk-10}, and it has been further studied in \cite{oconn-toda}
via a connection with the quantum Toda lattice. This Brownian model possesses
structures similar to those in the log-gamma model, so we expect that
the results
of the present paper could be reproduced for the Brownian model.

The third exactly solvable model is the continuum directed random
polymer \cite{Amir-Cor-Qual} that is expected to be a universal
scaling limit for a large class of polymer models; see \cite{Corwin}
for a recent review.

Usually the directed lattice polymer model is placed in a space--time
picture where the
paths are oriented in the time direction. (See articles and lectures
\mbox{\cite
{come-shig-yosh-03,come-shig-yosh-04,come-yosh-aop-06,denholl-polymer}} for
recent results and reviews of the general case.) In two dimensions (1
time${} + {}$1 space dimension),
the space--time picture is the same as our purely spatial picture, up to
a $45^\circ$ rotation
of the lattice and a change of lattice indices. The temporal aspect is not
really present in our work. So we have not separated a time dimension,
but simply
regard the paths as directed lattice paths.

Another standard feature of directed polymers that we have omitted is
the inverse temperature
parameter $\beta\in(0,\infty)$ that appears as a multiplicative
constant in front of the weights:
$ Z^{\beta}_{\vvec, \uvec} = \sum_{ x_\centerdot\in\Pi_{\vvec,
\uvec} }\exp\{ {\beta\sum_{j=1}^m \om(x_j)}\} $. For a fixed
weight distribution, $\beta$ modulates the strength
of the coupling between the walk and the environment. It is known that
in dimension
$1+3$ and higher, there can be a phase transition. By contrast, in low
dimensions ($1+1$ and $1+2$), the model is in the so-called strong
coupling regime for all
$0<\beta<\infty$ \cite{come-varg-06,laco-10}. The $\beta$ parameter
plays no role in the present work and has a fixed value $\beta=1$.
This is the unique $\beta$ value that turns the log-gamma model into
an exactly solvable model.

The techniques of the current paper are entirely probabilistic and rely
on the
stationary version of the log-gamma model. It can be expected that as a
combinatorial approach to this model, fully developed \cite
{corw-oconn-sepp-zygo},
more complete results
and alternative proofs for
the present results can be found.\vspace*{8pt}

\textit{Earlier literature.} Precise large deviation rate functions
for $\log Z$
in the case of directed polymers have not been derived in the past.
The strongest concentration inequalities can be found in recent references
\cite{Comets-Gregorio-arc,Liu-Watbled-2009,watbled-arc}.
The normalization of the left tail varies with the distribution of the weights
as demonstrated by \cite{Ben-Ari}, but the right tails have the same
normalization $n$.
Carmona and Hu \cite{Carmona-Hu-Gaussian}
have some bounds on the left tail of $\log Z$ in Gaussian
environments in dimensions $1+3$ and higher and for small enough $\beta$.
Similar bounds were proved later in \cite{moreno-2010} for bounded environments
using concentration inequalities for product measures.

For the exactly solvable zero-temperature models (i.e., last passage percolation
models), large deviation principles have been proved. For the longest increasing
path among planar Poisson points, an LDP for the length resulted from a
combination
of articles \cite{deus-zeit-99,kimj-96,loga-shep-77,sepp-ptrf-98}.
These results came before the advent of determinantal techniques.
For the corner growth model with geometric and exponential weights
\cite{joha} derived an LDP
in addition to the Tracy--Widom limit. An earlier right tail LDP
appeared in
\cite{sepp98ebp}.\vspace*{8pt}

\textit{Notation.} We collect some notation and conventions here for
easy reference. $\bN$ is for positive integers, $\bZ_{+}$ for
nonnegative integer, $\bR_{+}$ for nonnegative real numbers and $\bR^d_+$ is the set of all vectors with
nonnegative real coordinates.
Vector notation: elements of $\bR^d$ and $\bZ^d$ are $\vvec=
(v_1,v_2,\ldots, v_d)$. Coordinatewise ordering $\vvec\le\uvec$
means $v_1 \le u_1, v_2 \le u_2,\ldots, v_d \le u_d$. Particular
vectors are $\mathbf{1}= (1,1,\ldots, 1)$ and $\mathbf{0}=
(0,0,\ldots, 0)$. $\lfloor{\yvec}\rfloor= (\lfloor{y_1}\rfloor,
\lfloor{y_2}\rfloor,\ldots, \lfloor{y_d}\rfloor)$
where $\lfloor{y}\rfloor=\max\{n\in\bZ\dvtx n\le y\}$ is the integer
part of $y\in
\bR$.
The $\ell^1$ norm on $\bR^d$ is $\vert\vvec\vert_1=\vert v_1\vert
+\cdots
+\vert v_d\vert$.

The convex dual of a function $f\dvtx \bR\to(-\infty, \infty] $ is
$f^*(y)=\sup_{x\in\bR}\{ xy-f(x)\}$, and $f=f^{**}$ if and only if
$f$ is convex and lower
semicontinuous. We refer to \cite{rock-ca} for basic convex analysis.

The partition function $Z$ does not include the weight of the initial
point of the paths,
while $Z^\square$ does. In two dimensions we write $Z_{m,n}=Z_{(m,n)}$.

The usual gamma function is
$
\Gamma(\mu)=\int_{0}^{\infty} x^{\mu-1}e^{-x} \,dx
$
for $\mu>0$.
The digamma and trigamma functions are $\Psi_0=\Gamma'/\Gamma$
and $\Psi_1=\Psi_0'$. On $(0,\infty)$ $\Psi_0$ is increasing and concave
and $\Psi_1$ decreasing, positive and
convex, with $-\Psi_0(0+)=\Psi_1(0+)=\infty$.

\section{Large deviations for the log-gamma model}
\label{seclogg}

\subsection{The log-gamma model with i.i.d. weights}
\label{seciid-lg}
In this section we specialize to $d=2$ dimensions and the log-gamma
distributed weights.
Fix a positive real parameter $\mu$. This parameter remains fixed
through this entire
section, and hence is omitted from most notation.
In the log-gamma case we prefer to switch to multiplicative variables.
So the weight at point $(i,j)\in\bZ_+^2$ is $Y_{i,j}=e^{\om(i,j)}$
where the reciprocal $Y^{-1}$ has Gamma$(\mu)$ distribution.
Explicitly,
%
\begin{equation}\label{Gamu}
\bP\bigl\{Y^{-1} \ge s\bigr\}= \Gamma(\mu)^{-1} \int
_s^\infty x^{\mu-1}e^{-x} \,dx\qquad
\mbox{for $s\in\bR_+$.}
\end{equation}
As above, we write $Y$ for a generic random variable distributed as $Y_{i,j}$.
The digamma and trigamma functions give the
mean and variance, $\bE(\log Y)=-\Psi_0(\mu)$ and $\Vvv(\log
Y)=\Psi_1(\mu)$.

The logarithmic moment generating function (l.m.g.f.) of $\om=\log Y$ is
%
\begin{equation}\label{lmgfom}
\lmgf_{\mu}(\xi) = \log\bE \bigl( e^{\xi\log Y} \bigr)= \cases{\log
\Gamma(\mu-\xi) - \log\Gamma(\mu), &\quad $\xi\in (-\infty, \mu)$,
\cr
\infty, &\quad $\xi
\in[\mu,\infty)$.}\hspace*{-24pt}
\end{equation}

The point-to-point
partition function for directed paths from $(0,0)$ to $(m,n)$ is
%
\begin{equation} \label{parf2}
Z_{m,n} = \sum_{x_{\cdot}\in\Pi_{(m,n)}}\prod
_{j=1}^{m+n}Y_{x_j}.
\end{equation}
Note that we simplified notation by dropping the parentheses:
$Z_{m,n}=Z_{(m,n)}$.
For $(s,t)\in\bR_+^2$ the limiting free energy density exists by
superadditivity,
%
\begin{equation}\label{pmu1}
\pmu(s,t)= \lim_{n\to\infty} n^{-1}\log Z_{\lfloor
{ns}\rfloor, \lfloor{nt}\rfloor},\qquad \mbox{$
\bP$-a.s.}
\end{equation}
The limit is a finite constant. We begin by giving its exact value.
%
\begin{theorem}\label{llnTimo}
For $(s,t)\in\bR_+^2$ and $\mu\in(0,\infty)$, the limiting free
energy density
(\ref{pmu1}) is given by
%
\begin{equation}\label{pmu}
\pmu(s,t) = \inf_{0 < \rho< \mu}\bigl\{-s\Psi_0(\rho) - t
\Psi_0(\mu -\rho)\bigr\}.
\end{equation}
\end{theorem}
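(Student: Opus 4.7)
\medskip
\textbf{Proof plan for Theorem \ref{llnTimo}.}
The natural approach is to import the stationary log-gamma model of \cite{sepp-poly} and use it to bracket the i.i.d.\ free energy. Fix $\rho\in(0,\mu)$ and build an augmented environment on $\bZ_+^2$ by placing i.i.d.\ boundary weights with $U_{i,0}^{-1}\sim\mathrm{Gamma}(\rho)$ on the $x$-axis ($i\ge 1$), $V_{0,j}^{-1}\sim\mathrm{Gamma}(\mu-\rho)$ on the $y$-axis ($j\ge 1$), and the usual bulk weights $Y_{i,j}^{-1}\sim\mathrm{Gamma}(\mu)$ for $i,j\ge 1$, all independent. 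Let $\wt Z^{(\rho)}_{m,n}$ denote the resulting partition function over paths from the origin to $(m,n)$, with the boundary weights used on the initial horizontal/vertical segment. The \emph{Burke property} of \cite{sepp-poly} then guarantees that along any down-right lattice path the logarithmic increments of $\wt Z^{(\rho)}$ are independent, with horizontal increments distributed as $\log U$ and vertical increments as $\log V$. Telescoping $\log\wt Z^{(\rho)}_{m,n}$ along the L-shaped boundary of the rectangle and applying the strong law of large numbers yields
\be
 n^{-1}\log \wt Z^{(\rho)}_{\fl{ns},\fl{nt}} \;\longrightarrow\; -s\,\Psi_0(\rho)-t\,\Psi_0(\mu-\rho) \qquad \bP\text{-a.s.}
\label{statLLN}
\ee
since $\E[\log U]=-\Psi_0(\rho)$ and $\E[\log V]=-\Psi_0(\mu-\rho)$.

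For the \emph{upper bound} $\pmu(s,t)\le -s\Psi_0(\rho)-t\Psi_0(\mu-\rho)$, observe that $\wt Z^{(\rho)}_{m,n}$ decomposes as a sum over the exit point of the path from the boundary axes, and the term with exit from $(1,0)$ (respectively $(0,1)$) contains $U_{1,0}\cdot Z_{(1,0),(m,n)}$ (respectively $V_{0,1}\cdot Z_{(0,1),(m,n)}$) as a subsum. Hence, up to boundary factors that contribute $O(1)$ on the logarithmic scale after dividing by $n$, one has $Z_{m,n}\le \wt Z^{(\rho)}_{m,n}$ in a sense sufficient to transfer \eqref{statLLN} into $\limsup n^{-1}\log Z_{\fl{ns},\fl{nt}}\le -s\Psi_0(\rho)-t\Psi_0(\mu-\rho)$. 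Taking infimum over $\rho\in(0,\mu)$ gives one inequality.

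For the matching \emph{lower bound}, choose the unique minimizer $\rho^\star=\rho^\star(s,t)\in(0,\mu)$ of the right-hand side of \eqref{pmu}, characterized by the stationarity equation $s\Psi_1(\rho^\star)=t\Psi_1(\mu-\rho^\star)$, using the strict convexity of $\Psi_0$ on $(0,\mu)$ in either variable. At this characteristic parameter, the expected boundary contribution to $\log\wt Z^{(\rho^\star)}_{m,n}$ precisely matches the bulk rate; so one must show that the exit point of the $\wt Z^{(\rho^\star)}$-typical path lies within $o(n)$ of the origin, i.e.\ the bulk of the partition function is produced by genuinely interior paths. Once that localization is established, the bulk contribution is (up to negligible boundary pieces) a shifted copy of $Z_{\fl{ns},\fl{nt}}$, and combining with \eqref{statLLN} forces $\liminf n^{-1}\log Z_{\fl{ns},\fl{nt}}\ge -s\Psi_0(\rho^\star)-t\Psi_0(\mu-\rho^\star)$.

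The main technical obstacle is exactly this exit-point localization at the critical $\rho^\star$: one needs an upper bound on the probability that a typical path under $\wt Z^{(\rho^\star)}$ spends a macroscopic amount of time on the boundary. In the log-gamma setting this is precisely where the variance identity and exit-point lemmas developed in \cite{sepp-poly} for fluctuation exponents are used; differentiating the stationary free energy in $\rho$ and balancing horizontal vs.\ vertical boundary contributions at $\rho^\star$ makes the macroscopic-exit event cost a positive rate, forcing the infimum in \eqref{pmu} to be attained by the i.i.d.\ model. The rest of the argument is then routine convex analysis and the elementary verification that $\rho\mapsto -s\Psi_0(\rho)-t\Psi_0(\mu-\rho)$ is strictly convex on $(0,\mu)$ with interior minimizer, so that the infimum in \eqref{pmu} is actually a minimum.
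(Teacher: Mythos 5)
Your proposal takes a genuinely different route from the paper, and it has a real gap at the lower bound.

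\textbf{What is shared.} Both proofs start from the stationary log-gamma model, the Burke property from \cite{sepp-poly}, and the law of large numbers $n^{-1}\log Z^{(\rho)}_{\fl{ns},\fl{nt}}\to -s\Psi_0(\rho)-t\Psi_0(\mu-\rho)$. Your upper bound (drop all boundary exit terms except the first one) is fine; it is equivalent to observing that $Z^{(\rho)}_{m,n}\ge U_{1,0}\,Z^\square_{(1,1),(m,n)}$ and using continuity, and it amounts to plugging $a=0$ into the identity the paper calls \eqref{th-lln2}.

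\textbf{Where the proofs diverge.} The paper coarse-grains the exit-point decomposition \eqref{deco} into the equality
\[
p^{(\theta)}(s,t)=\sup_{0\le a\le s}\{-a\Psi_0(\theta)+p(s-a,t)\}\ \vee\ \sup_{0\le b\le t}\{-b\Psi_0(\mu-\theta)+p(s,t-b)\},
\]
valid for \emph{every} $\theta\in(0,\mu)$, and treats this as a one-parameter family of Legendre-type relations for $p(\cdot,t)$. Taking $s=t$, specializing $\theta$ to one half-interval, and passing to the double convex dual $f^{**}=f$ then \emph{solves} for $p$. No statement about where the quenched exit point lives is ever needed.

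You instead pick the single critical value $\rho^\star$ with $s\Psi_1(\rho^\star)=t\Psi_1(\mu-\rho^\star)$ and claim the lower bound follows once one shows the exit point under $Z^{(\rho^\star)}$ is $o(n)$. In the language of the display above, this says the supremum is attained at $a=0$, $b=0$ for $\theta=\rho^\star$.

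\textbf{The gap.} This localization step is not established, and it is the hard part. Proving that for all $a>0$ one has $-a\Psi_0(\rho^\star)+p(s-a,t)<p(s,t)$ already presupposes knowledge of $p$, which is exactly what is being determined — so invoking it directly is circular. The exit-point estimates in \cite{sepp-poly} do produce such a bound, but there the value of $p(s,t)$ and the exit-point control are derived \emph{together} through the variance identity and a fairly long chain of second-moment arguments; extracting the localization as a stand-alone black box to feed into your lower bound would require isolating precisely which of those estimates are logically prior to the free-energy formula, and you do not do so. Indeed the paper explicitly states its purpose is to replace that ``buried'' derivation by an elementary one: the convex-duality inversion is exactly the device that sidesteps the exit-point control your plan relies on. If you do want to complete your route, the precise annealed statement you need is: for every $\epsilon>0$, the terms in \eqref{deco} with exit index exceeding $\epsilon n$ contribute a fraction of $Z^{(\rho^\star)}_{\fl{ns},\fl{nt}}$ that is $e^{-cn}$ for some $c>0$; combined with continuity of $p$ this would close the argument, but proving that bound without prior knowledge of $p$ is nontrivial.
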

The value $\pmu(s,t)$ was already derived in \cite{sepp-poly} but the
proof was buried among
estimates for fluctuation exponents. In Section \ref{secpf1}
we sketch an elementary approach that utilizes special features of the
log-gamma model.
For the other explicitly solvable $1+1$-dimensional
polymer with Brownian environment, Moriarty and O'Connell \cite
{mori-oconn-07} computed the limiting free energy with a very different
large deviation approach.


The next result is a large deviation principle (LDP) for $\log
Z_{\lfloor{ns}\rfloor, \lfloor{nt}\rfloor}$
under normalization $n$.
The rate function is
%
\begin{equation}
\label{fdp} I_{s,t}(r) = \cases{ \ddd\sup_{\xi\in[0, \mu)} \Bigl\{ r\xi-
\inf_{\theta\in(\xi
,\mu)} \bigl( t\lmgf_{\theta}(\xi)-s\lmgf_{\mu-\theta}(-
\xi ) \bigr) \Bigr\}, &\quad $r \ge\pmu(s,t)$,
\vspace*{2pt}\cr
\infty, &\quad $r < \pmu(s,t)$.}\hspace*{-32pt}
\end{equation}
On the boundary ($s=0$ or $t=0$), the result reduces to i.i.d. large
deviations,
so we only consider $(s,t)$ in the interior of the quadrant.
%
\begin{theorem}\label{FULLldp}
Let $Y^{-1}\sim\operatorname{Gamma}(\mu)$ as in (\ref{Gamu}) and
$(s,t)\in(0,\infty)^2$. Then the distributions of $n^{-1}\log
Z_{\lfloor{ns}\rfloor, \lfloor{nt}\rfloor}$ satisfy a LDP
with normalization $n$ and rate function $I_{s,t}$. Explicitly, these
bounds hold
for any open set $G$ and any closed set $F$ in $\bR$:
%
\begin{equation}\label{fu2}
\varlimsup_{n\to\infty}n^{-1}\log\bP\bigl\{ n^{-1}\log
Z_{\lfloor
{ns}\rfloor, \lfloor{nt}\rfloor} \in F \bigr\} \le-\inf_{r\in F} I_{s,t}(r)
\end{equation}
and
%
\begin{equation}\label{fu1}
\varliminf_{n\to\infty}n^{-1}\log\bP\bigl\{ n^{-1}\log
Z_{\lfloor
{ns}\rfloor, \lfloor{nt}\rfloor} \in G \bigr\} \ge-\inf_{r\in G} I_{s,t}(r).
\end{equation}
On $[\pmu(s,t), \infty)$ the rate function $I_{s,t}$ is finite,
strictly increasing,
continuous and convex.
%
\begin{figure}

\includegraphics{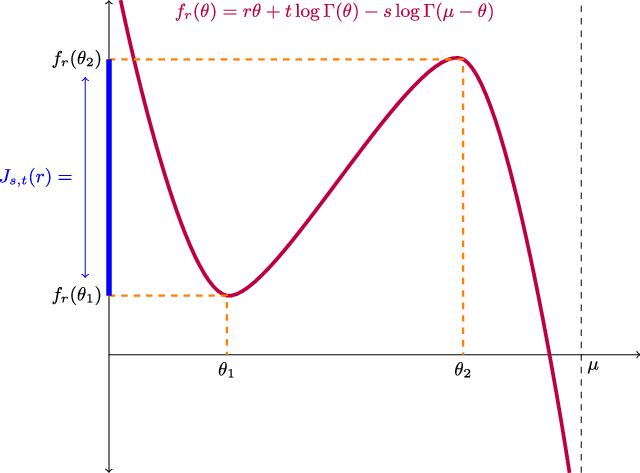}

\caption{Graphical representation of the solution to the variational problem
(\protect\ref{fdp}) that gives the rate function $\rf_{s,t}(r)
=f_r(\theta_2)-f_r(\theta_1) $. The curve $f_r(\theta)$ has the
same general shape as long as $r > p(s,t)$.}\label{bla}
\end{figure}
In particular, the unique zero of $I_{s,t}(r)$ is at $r=\pmu(s,t)$.
The right tail rate defined in (\ref{introJ}) is given by
%
\begin{equation}
\label{Jgam} \rf_{s,t}(r) =\cases{0, &\quad $r\in\bigl(-\infty, \pmu(s,t)\bigr]$,
\vspace*{2pt}\cr
I_{s,t}(r), &\quad $r\in\bigl[\pmu(s,t), \infty\bigr)$.}
\end{equation}
\end{theorem}
%
\begin{remark}
From a computational point of view, the solution to the variational
problem in (\ref{fdp}) can be computed
by
\[
I_{s,t}(r) = \sup_{0<\theta< \mu}\Bigl\{ f_r(\theta) -
\inf_{0<z \le
\theta} f_r(z) \Bigr\}= f_r(
\theta_2) - f_r(\theta_1),
\]
where
\[
f_r(\theta) = r\theta+ t\log\Gamma(\theta) - s\log\Gamma(\mu -
\theta),
\]
and for any $r> p(s,t)$,
$0< \theta_1 < \theta_2 < \mu$ are the solutions to the equation $
\frac{d}{d\theta} f_r(\theta) = 0$. (See Figure \ref{bla}.) This again
implies that the rate function is strictly positive as long as $r>p(s,t)$.
\end{remark}

\begin{remark} We do not address the precise large deviations in the left
tail, that is, in the range $r< \pmu(s,t)$. We expect the correct
normalization to be $n^2$. (Personal communication from I. Ben-Ari.)
Presently we do not have a technique for computing the rate function in that
regime. We include the trivial part $ I_{s,t}(r)=\infty$ for $r< \pmu
(s,t)$ in the theorem
so that we can compute the limiting l.m.g.f. by
a straightforward application of Varadhan's theorem.
\end{remark}


Define for $\xi\in\bR$,
%
\begin{equation}\label{lmgf2}
\Lambda_{s,t}(\xi) = \lim_{n\to\infty} n^{-1} \log\bE
e^{ \xi\log Z_{\lfloor{ns}\rfloor, \lfloor
{nt}\rfloor}}.
\end{equation}

\begin{corollary}\label{lmgf-thm1} Let $\xi\in\bR$. Then the limit in (\ref{lmgf2})
exists and is given by
%
\begin{equation}\label{finpieces}
\Lambda_{s,t}(\xi)= I^*_{s,t}(\xi) = \cases{ \pmu(s,t)\xi, &\quad
$\xi< 0$,
\vspace*{2pt}\cr
\ddd\inf_{\theta\in(\xi,\mu)} \bigl\{ t\lmgf_{\theta}(\xi )-s
\lmgf_{\mu-\theta}(-\xi) \bigr\}, &\quad $0\le\xi<\mu$,
\vspace*{2pt}\cr
\infty, &\quad $\xi\ge\mu$.}\hspace*{-28pt}
\end{equation}
\end{corollary}
%
\begin{remark}\label{infrestr}
Symmetry of $\Lambda_{s,t}$ in $(s,t)$
is clear from (\ref{lmgf2}) but not immediately obvious in the $0\le
\xi<\mu$ case
of (\ref{finpieces}). It turns out that if $s\le t$ the infimum is
achieved at a unique
$\theta_0 \in[(\mu+ \xi)/2, \mu)$, and then for $\Lambda_{t,s}(\xi)$, the same infimum is uniquely
achieved at
$\theta_1 = \mu+ \xi- \theta_0\in(\xi, (\mu+ \xi)/2]$.
In the case $s=t$ a simple formula arises:
$ \Lambda_{t,t}(\xi)=2t(\log\Gamma(\frac{\mu-\xi}2)-\log\Gamma
(\frac{\mu+\xi}2))$.
\end{remark}
%
\begin{remark} The first case of (\ref{fdp}) gives $I_{s,t}$ as the dual
$\Lambda_{s,t}^*$, and the reader may wonder whether this is the logic
of the proof
of the LDP.
It is not, for we have no direct way to compute $\Lambda_{s,t}$. Instead,
Theorem \ref{FULLldp} is first proved in an indirect manner
via the stationary model described in the
next subsection, and then $\Lambda_{s,t}$ is derived by Varadhan's theorem.
\end{remark}

Let us also record the result for the point-to-line case.
It behaves like the point-to-point
case along the diagonal.
%
\begin{corollary}\label{free-rate}
Let $Y^{-1}\sim\operatorname{Gamma}(\mu)$ as in (\ref{Gamu}) and $s>0$.
Then the distributions of $\log Z^{\mathrm{line}}_{\lfloor
{ns}\rfloor}$
satisfy an LDP
with normalization $n$ and rate function $I_{s/2, s/2}$.
\end{corollary}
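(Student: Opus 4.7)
The plan is to deduce the LDP for $n^{-1}\log Z^{\text{line}}_{\fl{ns}}$ from the LDPs for the point-to-point partition functions along the antidiagonal $\{(a,s-a):0\le a\le s\}$ (Theorem \ref{FULLldp}) via the sandwich
\[
Z_{\fl{ns/2},\,\fl{ns}-\fl{ns/2}} \;\le\; Z^{\text{line}}_{\fl{ns}} \;\le\; (\fl{ns}+1)\max_{0\le k\le\fl{ns}} Z_{k,\,\fl{ns}-k}.
\]
The left inequality, combined with Theorem \ref{FULLldp} at $(s/2,s/2)$, immediately gives the right-tail lower bound
\[
\varliminf_{n\to\infty} n^{-1}\log\bP\{n^{-1}\log Z^{\text{line}}_{\fl{ns}}\ge r\} \;\ge\; -I_{s/2,s/2}(r)\qquad(r>\pmu(s/2,s/2))
\]
and renders the left tail trivial: $\bP\{n^{-1}\log Z^{\text{line}}_{\fl{ns}}\le r\}\le \bP\{n^{-1}\log Z_{\fl{ns/2},\fl{ns/2}}\le r\}$ decays faster than any exponential for $r<\pmu(s/2,s/2)$ by the $I_{s/2,s/2}\equiv +\infty$ clause of Theorem \ref{FULLldp}.

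For the upper bound, the right inequality together with a union bound gives
\[
\bP\{Z^{\text{line}}_{\fl{ns}}\ge e^{nr}\} \;\le\; \sum_{k=0}^{\fl{ns}} \bP\{Z_{k,\,\fl{ns}-k}\ge e^{nr}/(\fl{ns}+1)\}.
\]
Discretizing $[0,s]$ into a fine mesh, applying Theorem \ref{FULLldp} uniformly on the mesh (via continuity of $I_{a,s-a}(r)$ in $a$ inherited from the convex-analytic formula \eqref{fdp}), and invoking the principle of the largest term yields
\[
\varlimsup_{n\to\infty} n^{-1}\log\bP\{n^{-1}\log Z^{\text{line}}_{\fl{ns}}\ge r\}\;\le\;-\inf_{a\in[0,s]} I_{a,s-a}(r).
\]
Since $I_{s/2,s/2}$ is $+\infty$ below $\pmu(s/2,s/2)$ and strictly increasing and continuous above, these right-tail bounds translate into the full LDP on $\bR$ by standard LDP conversions.

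The crucial remaining step is the identification $\inf_{a\in[0,s]} I_{a,s-a}(r) = I_{s/2,s/2}(r)$. Writing $I_{a,s-a}=\Lambda_{a,s-a}^*$ and applying Sion's minimax theorem on the compact interval $a\in[0,s]$,
\[
\inf_{a\in[0,s]} \sup_{\xi\in[0,\mu)}\{r\xi - \Lambda_{a,s-a}(\xi)\} \;=\; \sup_{\xi\in[0,\mu)}\{r\xi - \sup_{a\in[0,s]} \Lambda_{a,s-a}(\xi)\}.
\]
The hypotheses hold because $(a,\xi)\mapsto r\xi-\Lambda_{a,s-a}(\xi)$ is convex in $a$ and concave in $\xi$: the formula from Corollary \ref{lmgf-thm1},
\[
\Lambda_{a,b}(\xi)=\inf_{\theta\in(\xi,\mu)}\{b\,\lmgf_\theta(\xi)-a\,\lmgf_{\mu-\theta}(-\xi)\},
\]
is an infimum of affine functions of $(a,b)$ hence concave in $(a,b)$, while $\Lambda_{s,t}$ is a limiting l.m.g.f.\ hence convex in $\xi$. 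The symmetry $\Lambda_{a,b}=\Lambda_{b,a}$ of Remark \ref{infrestr}, combined with concavity, makes $a\mapsto \Lambda_{a,s-a}(\xi)$ concave and symmetric about $a=s/2$, so its maximum is attained there and $\sup_a\Lambda_{a,s-a}(\xi)=\Lambda_{s/2,s/2}(\xi)$, yielding the claim.

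The main technical hurdles I anticipate are the uniform principle-of-the-largest-term argument (controlling $I_{a,s-a}(r)$ uniformly in $a$ across the discretization) and the verification of Sion's semicontinuity hypotheses for $\Lambda_{a,b}(\xi)$ on the closed simplex, including the boundary cases $a\in\{0,s\}$ where the partition function reduces to an i.i.d.\ product. Both should be tractable from the convex-analytic regularity of $\Lambda_{s,t}$ and $I_{s,t}$ recorded in Theorem \ref{FULLldp} and Corollary \ref{lmgf-thm1}.
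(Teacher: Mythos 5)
Your proof is correct, and since the paper explicitly omits the proof (``there is nothing new in the proof of Corollary \ref{free-rate} so we omit it''), your argument is precisely the one the authors have in mind: the sandwich $Z_{\fl{ns/2},\,\fl{ns}-\fl{ns/2}}\le Z^{\text{line}}_{\fl{ns}}\le (\fl{ns}+1)\max_k Z_{k,\fl{ns}-k}$, a union bound plus discretization for the upper bound, and the concavity--symmetry identification of the diagonal as the maximizer. The heart of the matter, namely that $\Lambda_{a,b}(\xi)$ is concave in $(a,b)$ (infimum of affine functions of $(a,b)$) and symmetric, so that $\Lambda_{a,s-a}(\xi)\le\Lambda_{s/2,s/2}(\xi)$ for all $a\in[0,s]$, is exactly right.

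Two small remarks. First, Sion's minimax theorem is more than you need: you only require the inequality $\inf_a I_{a,s-a}(r)\ge I_{s/2,s/2}(r)$ for $r\ge\pmu(s/2,s/2)$, and this follows from the trivial direction $\inf\sup\ge\sup\inf$ together with $\sup_a\Lambda_{a,s-a}(\xi)=\Lambda_{s/2,s/2}(\xi)$; the reverse inequality $\inf_a I_{a,s-a}(r)\le I_{s/2,s/2}(r)$ is automatic by evaluation at $a=s/2$. (In fact the asserted equality $\inf_a I_{a,s-a}(r)=I_{s/2,s/2}(r)$ can fail for $r<\pmu(s/2,s/2)$, where some $I_{a,s-a}(r)$ are finite while $I_{s/2,s/2}(r)=\infty$, but as you note that range is handled separately by the superexponential left tail, so only the one-sided inequality on $[\pmu(s/2,s/2),\infty)$ is actually used.) Second, the uniform-in-$k$ control in the discretization is better attributed to the continuity of $\rf$ in the direction variable from Theorems \ref{J-thm1}--\ref{J-thm2} (or handled directly by a compactness argument using Lemma \ref{lm-J10}), rather than to formula \eqref{fdp} itself, since a supremum of continuous functions is in general only lower semicontinuous. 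With those caveats your argument closes the gap the paper leaves to the reader.
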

%
\begin{remark}
For $\e>0$ and $r= p(s,t) + \e$, one can show after some calculus
that there exists a nonzero constant $C = C_{s,t}(\mu)$ so that
\[
I_{s,t}(r) = C\e^{3/2} + o\bigl(\e^{3/2}\bigr).
\]
This suggests that $\Vvv(\log Z_{\lfloor{ns}\rfloor,\lfloor
{nt}\rfloor})$ is of order $n^{2/3}$. Rigorous upper bounds on the moments
$\bE|{\log Z_{\lfloor{ns}\rfloor,\lfloor{nt}\rfloor}-np(s,t)}|^p$
for $1\le p< 3/2$ can be
found in \cite{sepp-poly}, Theorem 2.4.

We computed the precise value of the constant $C$ for the point-to-line
rate function,
%
\begin{equation}\label{GMF-NG}
I_{1,1}(r) = \frac{4}{3}\frac{1}{\sqrt{|\Psi_2(\mu
/2)|}}\e^{3/2} + o
\bigl(\e^{3/2}\bigr),
\end{equation}
where $\Psi_2 = \Psi_0''$.
\end{remark}

\subsection{The stationary log-gamma model} \label{secstat-lg}
Next we consider the log-gamma model in a stationary situation that is special
to this choice of distribution.
Working with the stationary case is the key to explicit computations,
including all the previous
results, and provides some explanation for the formulas that arose for $I_{s,t}$
and $\Lambda_{s,t}$ in (\ref{fdp}) and (\ref{finpieces}).

The stationary model is created by
appropriately altering
the distributions of the weights on the boundaries of the quadrant $\bZ_+^2$.
We continue to use the parameter $\mu\in(0,\infty)$ fixed at the
beginning of this section,
and we introduce a second parameter $\theta\in(0,\mu)$.
Let the collection of independent weights $\{U_{i,0}, V_{0,j},
Y_{i,j}\dvtx
i,j \in\bN\}$
have the following marginal distributions:
%
\begin{eqnarray}\label{distr1}
U_{i,0}^{-1}&\sim&\operatorname{Gamma}(\theta),\qquad V_{0,j}^{-1}
\sim\operatorname{Gamma}(\mu-\theta)\quad \mbox{and}\nonumber\\[-8pt]\\[-8pt]
Y_{i,j}^{-1}&\sim&
\operatorname{Gamma}(\mu).\nonumber
\end{eqnarray}
Define the partition function $Z^{(\theta)}_{m,n}$ by (\ref{parf2})
with the following weights:
at the origin $Y_{0,0}=1$, on the $x$-axis
$Y_{i,0} = U_{i,0}$, on the $y$-axis $Y_{0,j} = V_{0,j}$, and in the bulk
the weights $\{Y_{i,j}\dvtx i,j \in\bN\}$ are i.i.d. $\operatorname{Gamma}(\mu)^{-1}$
as before.
Equivalently, we can decompose the
stationary partition function
$ Z^{(\theta)}_{m,n}$ according to the exit point of
the path from the boundary
%
\begin{equation} \label{deco-1}
Z^{(\theta)}_{m,n} 
= \sum
_{k=1}^{m} \Biggl(\prod
_{i=1}^{k}U_{i,0} \Biggr)
Z^{\square}_{(k,1),(m,n)} +\sum_{\l=1}^{n}
\Biggl(\prod_{j=1}^{\l}V_{0,j}
\Biggr) Z^{\square}_{(1,\l),(m,n)}.
\end{equation}

The symbols $U_{i,0}$ and $V_{0,j}$ were at first introduced for the
boundary weights
to highlight the change of distribution. Next let us define for all
$(i,j)\in\bZ_+^2\setminus\{(0,0)\}$,
%
\begin{equation}\label{fractionid}
U_{i,j}= \frac{Z^{(\theta)}_{i,j}}{Z^{(\theta)}_{i-1,j}}
\quad\mbox{and}\quad V_{i,j}=
\frac{Z^{(\theta)}_{i,j}}{Z^{(\theta)}_{i,j-1}}.
\end{equation}
Note that this property was already built into the boundaries because,
for example, $Z^{(\theta)}_{i,0}=U_{1,0}\cdots U_{i,0}$.
The key result that allows explicit calculations for this model is the
following.
%
\begin{proposition}\label{burke-pr}
For each $(i,j)\in\bZ_+^2\setminus\{(0,0)\}$, we
have the following marginal distributions:
$U_{i,j}^{-1}\sim \operatorname{Gamma}(\theta)$ and $V_{i,j}^{-1}\sim
\operatorname{Gamma}(\mu
-\theta)$.
For any fixed $n\in\bZ_+$, the variables $\{U_{i,n}\dvtx i\in\bN\}$ are
i.i.d., and
for any fixed $m\in\bZ_+$, the variables $\{V_{m,j}\dvtx j\in\bN\}$ are i.i.d.
\end{proposition}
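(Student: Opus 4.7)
The plan is to reduce the entire proposition to an algebraic identity at a single lattice vertex and then propagate it across the quadrant by induction on ``down-right'' lattice paths.

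\emph{One-vertex identity.} The recursion $Z^{(\theta)}_{i,j} = Y_{i,j}\bigl(Z^{(\theta)}_{i-1,j} + Z^{(\theta)}_{i,j-1}\bigr)$ translates, via \eqref{fractionid}, into
\[
U_{i,j} = Y_{i,j}\Bigl(1 + \tfrac{U_{i,j-1}}{V_{i-1,j}}\Bigr), \qquad V_{i,j} = Y_{i,j}\Bigl(1 + \tfrac{V_{i-1,j}}{U_{i,j-1}}\Bigr).
\]
Start from independent positive variables with $U^{-1}\sim\text{Gamma}(\theta)$, $V^{-1}\sim\text{Gamma}(\mu-\theta)$, $Y^{-1}\sim\text{Gamma}(\mu)$, and write $A=U^{-1}$, $B=V^{-1}$, $C=Y^{-1}$. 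A brief manipulation gives $(U')^{-1} = CA/(A+B)$ and $(V')^{-1} = CB/(A+B)$. Two classical Beta-Gamma facts then close the argument: (i) $S:=A+B\sim\text{Gamma}(\mu)$ is independent of $R:=A/(A+B)\sim\text{Beta}(\theta,\mu-\theta)$; and (ii) if $C\sim\text{Gamma}(\mu)$ and $R\sim\text{Beta}(\theta,\mu-\theta)$ are independent, then $CR$ and $C(1-R)$ are \emph{independent}, with $CR\sim\text{Gamma}(\theta)$ and $C(1-R)\sim\text{Gamma}(\mu-\theta)$. Hence $(U')^{-1}$ and $(V')^{-1}$ are independent Gammas with the desired parameters.

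\emph{Down-right path induction.} Call a nearest-neighbor path in $\bZ_+^2$ going only south or east a down-right path. Given such a path $\gamma$ separating its upper-right lattice region $\cR(\gamma)$ from the axial corner, let $\cE(\gamma)$ denote the set of edge variables $U_{\cdot,\cdot}$ and $V_{\cdot,\cdot}$ crossed by $\gamma$. The inductive claim is that the variables in $\cE(\gamma)$ are mutually independent with the stated Gamma marginals and jointly independent of the bulk weights $\{Y_{i,j}:(i,j)\in\cR(\gamma)\}$. The base case is the $L$-shaped axial path, for which the statement is exactly hypothesis \eqref{distr1}. To advance $\gamma$ to $\gamma'$ by ``flipping'' a single southwest corner at a vertex $(i,j)\in\cR(\gamma)$, replace the incoming edges $(U_{i,j-1}, V_{i-1,j})$ by the outgoing edges $(U_{i,j}, V_{i,j})$: the one-vertex identity, applied with the previously unused bulk weight $Y_{i,j}$, yields the correct joint law for the new pair, and independence from the unchanged edges persists because those edges were independent of $(U_{i,j-1}, V_{i-1,j}, Y_{i,j})$ by induction.

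\emph{Harvesting the conclusions and main obstacle.} The marginal claim at a given $(i,j)$ follows by picking any $\gamma$ crossing an edge incident to $(i,j)$. For the horizontal i.i.d.\ claim at level $n$, take $\gamma$ to follow the $y$-axis up to height $n$, step one unit east, then run east indefinitely at height $n$; then $\{U_{i,n}:i\ge 1\}\subset\cE(\gamma)$ is mutually independent with Gamma$(\theta)^{-1}$ marginals, and the vertical case is symmetric. The technical heart is the Beta-Gamma identity: the fact that $CR$ and $C(1-R)$ are \emph{independent}, not merely uncorrelated, is what drives everything. The only other delicate point is the bookkeeping in the induction — one must track which bulk weight $Y_{i,j}$ has already been consumed so that the weight used at each flip is genuinely independent of $\cE(\gamma)$. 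Structuring the induction by down-right paths makes this transparent.
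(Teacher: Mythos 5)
Your proof is correct and essentially reconstitutes the argument the paper defers to: the paper does not prove Proposition~\ref{burke-pr} itself but cites Theorem~3.3 of \cite{sepp-poly} with the remark that it ``follows in an elementary fashion from the properties of the gamma distribution,'' and your Beta--Gamma one-vertex lemma combined with corner-flipping induction over down-right paths is precisely the mechanism used there. The only formal nicety worth flagging is that since each flip consumes a single bulk vertex, the induction reaches only finite modifications of the axial base path; so to harvest the i.i.d.\ claim along row $n$ you should use the truncated staircase that goes south to $(0,n)$, east to $(M,n)$, south to $(M,0)$, and then east along the axis, and afterwards let $M\to\infty$, rather than invoking the infinite horizontal path directly.
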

This is a special case of Theorem 3.3 in \cite{sepp-poly}, where the
independence
of these weights along more general down-right lattice paths is established.
Proposition~\ref{burke-pr} is the only result from \cite{sepp-poly}
that we use.
It follows in an elementary
fashion from the properties of the gamma distribution.

As an immediate application we can write
%
\begin{equation}\label{sum-1}
n^{-1}\log Z^{(\theta)}_{\lfloor{ns}\rfloor, \lfloor{nt}\rfloor} =n^{-1}\sum
_{j=1}^{\lfloor{nt}\rfloor} \log V_{0,j} +
n^{-1}\sum_{i=1}^{\lfloor{ns}\rfloor} \log
U_{i,\lfloor{nt}\rfloor}
\end{equation}
as a sum of two sums of i.i.d. variables, and from this compute
%
\begin{equation}\label{meanlog}
\bE\bigl(\log Z^{(\theta)}_{m,n}\bigr)=m\bE(\log U)+n\bE(\log V) =
-m\Psi_0(\theta) - n \Psi_0(\mu-\theta)\hspace*{-25pt}
\end{equation}
and obtain the law of large numbers,
%
\begin{equation}\label{th-lln}\qquad
n^{-1}\log Z^{(\theta)}_{\lfloor{ns}\rfloor, \lfloor{nt}\rfloor} \to p^{(\theta)}(s,t)=
-s\Psi_0(\theta)-t\Psi_0(\mu-\theta) ,\qquad\mbox{$\bP$-a.s.}
\end{equation}

Note that the two sums on the right-hand side of (\ref{sum-1}) are not
independent of each other.
In fact, they are so strongly negatively correlated that the variance
of their sum is
of order $n^{2/3}$ \cite{sepp-poly}.
Comparison of (\ref{pmu}) and (\ref{th-lln}) reveals a variational
principle at work:
$\pmu(s,t)$ is the minimal free energy of a stationary system with
bulk parameter $\mu$.


Instead of the right tail large deviation rate function, we
give the asymptotic l.m.g.f. in the next result. Define
%
\begin{equation}\label{L}
\Lambda_{\theta, (s,t)}(\xi) = \lim_{n\to\infty} n^{-1} \log\bE
e^{ \xi\log Z^{(\theta)}_{\lfloor{ns}\rfloor, \lfloor{nt}\rfloor}}.
\end{equation}

\begin{theorem}\label{La-th-thm}
Let $s,t\ge0 $ and $0 < \theta< \mu$. Then the limit in (\ref{L})
exists for $\xi\ge0$
and is given by
%
\begin{equation}\label{La-th}\qquad
\Lambda_{\theta, (s,t)}(\xi) = \cases{ \max \bigl\{ s\lmgf_{\theta}(\xi)-
t\lmgf_{\mu-\theta}(-\xi), t\lmgf_{\mu-\theta}(\xi)- s
\lmgf_{\theta}(-\xi) \bigr\},
\vspace*{2pt}\cr
\hspace*{38pt}0 \le\xi< \theta\wedge(\mu-\theta)
\vspace*{2pt}\cr
\infty, \qquad \xi\ge\theta\wedge(\mu-\theta).}
\end{equation}
%
\end{theorem}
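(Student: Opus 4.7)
The proof of Theorem \ref{La-th-thm} rests on the two product decompositions derived from the telescoping identity \eqref{fractionid}:
\[
Z^{(\theta)}_{m,n} = \prod_{j=1}^n V_{0,j} \cdot \prod_{i=1}^m U_{i,n} = \prod_{i=1}^m U_{i,0} \cdot \prod_{j=1}^n V_{m,j}.
\]
By Proposition \ref{burke-pr}, each of the four factors on the right is a product of i.i.d.\ random variables with an explicit logarithmic moment generating function; in particular $\bE\bigl(\prod_{j=1}^n V_{0,j}\bigr)^\xi = e^{n\lmgf_{\mu-\theta}(\xi)}$ and $\bE\bigl(\prod_{i=1}^m U_{i,n}\bigr)^\xi = e^{m\lmgf_\theta(\xi)}$ on the respective domains of finiteness.

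First I dispose of the divergent case $\xi\ge\theta\wedge(\mu-\theta)$. Assume without loss of generality $\xi\ge\theta$. Restricting the sum \eqref{parf2} to the single path that runs along the $x$-axis to $(m,0)$ and then straight up column $m$ gives the pathwise bound $Z^{(\theta)}_{m,n}\ge\prod_{i=1}^m U_{i,0}\cdot\prod_{j=1}^n Y_{m,j}$, where the two products on the right are independent by the construction of the stationary model. Since $\bE U^\xi=\Gamma(\theta-\xi)/\Gamma(\theta)=\infty$ for $\xi\ge\theta$, we conclude $\bE(Z^{(\theta)}_{m,n})^\xi=\infty$ and hence $\Lambda_{\theta,(s,t)}(\xi)=\infty$. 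The case $\xi\ge\mu-\theta$ is symmetric.

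For $0\le\xi<\theta\wedge(\mu-\theta)$, the lower bound follows from the FKG inequality applied to the underlying product-measure space of the independent inputs $\{V_{0,j}\}\cup\{U_{i,0}\}\cup\{Y_{i,j}\}$. The function $(Z^{(\theta)}_{m,n})^\xi$ is coordinatewise nondecreasing in every input (since $\xi\ge 0$ and $Z$ is), while $(\prod_{j=1}^n V_{0,j})^{-\xi}$ is nonincreasing in the $V_{0,j}$ and constant in the rest. FKG for oppositely monotone functions on a product space yields
\[
\bE\bigl[(Z^{(\theta)}_{m,n})^\xi \bigl(\textstyle\prod_{j=1}^n V_{0,j}\bigr)^{-\xi}\bigr]\le \bE\bigl[(Z^{(\theta)}_{m,n})^\xi\bigr]\cdot \bE\bigl[\bigl(\textstyle\prod_{j=1}^n V_{0,j}\bigr)^{-\xi}\bigr].
\]
The left side equals $e^{m\lmgf_\theta(\xi)}$, by the first product decomposition combined with the i.i.d.\ Gamma$(\theta)^{-1}$ distribution of $\{U_{i,n}\}_{i=1}^m$ from Proposition \ref{burke-pr}, while the last factor on the right equals $e^{n\lmgf_{\mu-\theta}(-\xi)}$. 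Rearranging produces
\[
\bE\bigl[(Z^{(\theta)}_{m,n})^\xi\bigr]\ge e^{m\lmgf_\theta(\xi)-n\lmgf_{\mu-\theta}(-\xi)}.
\]
The symmetric FKG argument, using the second product decomposition and dividing by $\prod_{i=1}^m U_{i,0}$, gives $\bE[(Z^{(\theta)}_{m,n})^\xi]\ge e^{n\lmgf_{\mu-\theta}(\xi)-m\lmgf_\theta(-\xi)}$. Taking $n^{-1}\log$ and sending $n\to\infty$ with $m=\fl{ns}$ and polymer height $\fl{nt}$ gives the lower bound on $\Lambda_{\theta,(s,t)}(\xi)$ by the maximum of the two expressions in \eqref{La-th}.

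The matching upper bound is the main technical step. The strategy is to combine the sum-of-i.i.d.\ representation \eqref{sum-1} with a union bound and Cram\'er's theorem applied separately to $\sum_{j=1}^{\fl{nt}}\log V_{0,j}$ and $\sum_{i=1}^{\fl{ns}}\log U_{i,\fl{nt}}$, then transfer the probability bound to the LMGF via Varadhan's lemma (or directly via exponential Chebyshev with an optimized split). The difficulty is that the two sums in \eqref{sum-1} are strongly negatively correlated, so a naive independent-sum estimate only gives the loose bound $\Lambda\le s\lmgf_\theta(\xi)+t\lmgf_{\mu-\theta}(\xi)$; the max-of-two structure in \eqref{La-th} reflects the sharper dichotomy that in the rare-event regime only one of the two sums carries the atypical deviation, while the other is pinned near its mean by the identity-imposed negative correlation. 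Verifying this dichotomy at the LDP level, and checking that the two ``pure'' scenarios match exactly the two FKG lower bounds just established, is what pins $\Lambda_{\theta,(s,t)}(\xi)$ down to the claimed maximum.
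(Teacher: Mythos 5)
Your treatment of the divergent case $\xi\ge\theta\wedge(\mu-\theta)$ is correct. Your FKG argument for the lower bound is correct and is a genuinely different, more elementary route than the paper takes: from the telescoping identity $Z^{(\theta)}_{m,n}=\prod_j V_{0,j}\cdot\prod_i U_{i,n}$, the fact that $(Z^{(\theta)}_{m,n})^\xi$ is coordinatewise increasing while $(\prod_j V_{0,j})^{-\xi}$ is decreasing in the boundary $V$'s and constant in the rest gives $\bE[(\prod_i U_{i,n})^\xi]\le\bE[(Z^{(\theta)}_{m,n})^\xi]\cdot\bE[(\prod_j V_{0,j})^{-\xi}]$ by FKG on the product space, and the left side and second factor on the right are explicit by Proposition~\ref{burke-pr}. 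This cleanly yields $\Lambda_{\theta,(s,t)}(\xi)\ge\max\{s\lmgf_\theta(\xi)-t\lmgf_{\mu-\theta}(-\xi),\,t\lmgf_{\mu-\theta}(\xi)-s\lmgf_\theta(-\xi)\}$ without any of the machinery in Sections~\ref{sec:reg} and~\ref{sec:pf1}. The paper does not use FKG anywhere; it instead gets both bounds at once from the horizontal/vertical decomposition and infimal-convolution duality.

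The matching upper bound, however, is not proved: you describe a strategy and explain why the naive approach fails, but you do not carry it out, and the step you defer --- ``verifying this dichotomy at the LDP level'' --- is precisely the hard content of the theorem. As you note, the two sums in \eqref{sum-1} are strongly negatively correlated, so Cram\'er's theorem applied to each separately plus a union bound cannot give the claimed max (a union bound on $\{L_n+U_n\ge nr\}$ over a partition of the $L_n$-axis only yields $\inf_q\max\{\lambda(q),\phi(r-q)\}$ as an upper bound on the rate, which is weaker than the independent infimal-convolution rate and has no apparent connection to the claimed formula). The assertion that ``only one of the two sums carries the atypical deviation while the other is pinned near its mean'' needs to be demonstrated from the structure of the model; it does not follow from Proposition~\ref{burke-pr} alone, which gives marginal i.i.d.\ distributions but says nothing quantitative about the joint law of $\sum\log V_{0,j}$ and $\sum\log U_{i,\fl{nt}}$. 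The paper's proof of the upper bound goes through a substantial chain: the right-tail LDP for the i.i.d.\ model (Proposition~\ref{ldp-pr1}), the right-tail LDPs for the horizontal and vertical partition functions with the explicit transition formulas of Theorem~\ref{hor-thm}, the transfer from LDP to l.m.g.f.\ in Lemma~\ref{var-aux-lm}, and finally the case analysis that identifies $\Lambda^{\text{hor}}\vee\Lambda^{\text{ver}}$ with the stated maximum. None of that is replaced by your sketch, so the upper bound remains a genuine gap.
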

%
\begin{remark} Let the parameters $0 < \theta< \mu$ be given. The
\textit{characteristic direction} is the choice
%
\begin{equation}\label{char3}
(s,t)=c\bigl(\Psi_1(\mu-\theta), \Psi_1(\theta)\bigr)
\qquad\mbox{for a constant $c>0$.}
\end{equation}
With this choice
the variance of $\log Z^{(\theta)}_{\lfloor{ns}\rfloor, \lfloor
{nt}\rfloor}$ is of order
$n^{2/3}$, while
in other directions the fluctuations of $\log Z^{(\theta)}_{\lfloor
{ns}\rfloor,
\lfloor{nt}\rfloor}$ have
order\vspace*{1pt} of magnitude $n^{1/2}$ and they are asymptotically Gaussian \cite
{sepp-poly}.
By this token, we would expect the large deviations in the
characteristic situation
to be unusual, while in the off-characteristic directions we would
expect the more
typical large deviations of order $e^{-n}$ in both tails. In Lemma \ref
{left-lm}(b) we
give a bound on the left tail that indicates superexponential decay under
(\ref{char3}). This also implies that if (\ref{char3}) holds, then
formula (\ref{La-th})
can be complemented with the case $\Lambda_{\theta, (s,t)}(\xi) =
p^{(\theta)}(s,t)\xi$
for $\xi\le0$. Presently we do not have further information about
these large deviations.
\end{remark}
%
\begin{remark} If the two sums in (\ref{sum-1}) were independent we
would have
$\Lambda_{\theta, (s,t)}(\xi) =s\lmgf_{\theta}(\xi)+ t\lmgf_{\mu
-\theta}(\xi)$.
Obviously (\ref{La-th}) reflects the strong negative correlation of
these sums,
but currently we do not have a good explanation (besides the proof!)
for the formula that arises.
\end{remark}

The maximum in (\ref{La-th}) comes from the choice of the first step
of the path:
either horizontal or vertical. Corresponding to this choice, define partition
functions
%
\begin{equation} \label{Zhor}
Z^{(\theta), \mathrm{hor}}_{\lfloor{ns}\rfloor, \lfloor{nt}\rfloor
}=\sum_{k=1}^{\lfloor{ns}\rfloor}
\Biggl(\prod_{i=1}^{k}U_{i,0}
\Biggr)Z^{\square}_{(k,1),(\lfloor{ns}\rfloor
,\lfloor{nt}\rfloor)}
\end{equation}
and
%
\begin{equation} \label{Zver}
Z^{(\theta), \mathrm{ver}}_{\lfloor{ns}\rfloor, \lfloor{nt}\rfloor
}=\sum_{\l=1}^{\lfloor{nt}\rfloor}
\Biggl(\prod_{j=1}^{\l
}V_{0,j}
\Biggr)Z^{\square}_{(1,\l),(\lfloor{ns}\rfloor,\lfloor
{nt}\rfloor)},
\end{equation}
together with l.m.g.f.'s
%
\begin{equation}\label{Lhor}
\Lambda^{\mathrm{hor}}_{\theta, (s,t)}(\xi) = \lim_{n\to\infty}
n^{-1} \log\bE e^{ \xi\log Z^{(\theta), \mathrm{hor}}_{\lfloor
{ns}\rfloor,
\lfloor{nt}\rfloor}}
\end{equation}
and
%
\begin{equation} \label{Lver}
\Lambda^{\mathrm{ver}}_{\theta, (s,t)}(\xi) = \lim_{n\to\infty}
n^{-1} \log\bE e^{ \xi\log Z^{(\theta), \mathrm{ver}}_{\lfloor
{ns}\rfloor,
\lfloor{nt}\rfloor}}.
\end{equation}
Then $Z^{(\theta)}_{\lfloor{ns}\rfloor, \lfloor{nt}\rfloor
}=Z^{(\theta), \mathrm
{hor}}_{\lfloor{ns}\rfloor, \lfloor{nt}\rfloor}+Z^{(\theta),
\mathrm{ver}}_{\lfloor{ns}\rfloor, \lfloor{nt}\rfloor}$
leads to
%
\begin{equation}\label{La-max}
\Lambda_{\theta, (s,t)}(\xi) = \Lambda^{\mathrm
{hor}}_{\theta, (s,t)}(\xi) \vee
\Lambda^{\mathrm{ver}}_{\theta, (s,t)}(\xi),
\end{equation}
which is the starting point for the proof of (\ref{La-th}).

The horizontal and vertical partition functions are in some sense
between the
stationary one and the one from (\ref{parf2}) with i.i.d. weights.
It turns out that these intermediate partition functions behave either
like the stationary
one or like the i.i.d. one, with a sharp transition in between, and
this holds both
at the level of the limiting free energy density and the l.m.g.f.
Let us focus on the horizontal case, the vertical case being the same
after the
swap $s\leftrightarrow t$ and $\theta\leftrightarrow\mu-\theta$.

Qualitatively, with $t$ fixed, when
$s$ is large $Z^{(\theta), \mathrm{hor}}_{\lfloor{ns}\rfloor,
\lfloor{nt}\rfloor}$
behaves like $Z^{(\theta)}_{\lfloor{ns}\rfloor, \lfloor{nt}\rfloor}$,
and when $s$ is small $Z^{(\theta), \mathrm{hor}}_{\lfloor
{ns}\rfloor, \lfloor{nt}\rfloor}$ behaves like
$Z_{\lfloor{ns}\rfloor, \lfloor{nt}\rfloor}$ from (\ref{parf2}).
The conditions\vspace*{1pt} for the transitions are the following:
%
\begin{equation}\label{trans1}
s\Psi_1(\theta)\ge t\Psi_1(\mu-\theta)
\end{equation}
and
%
\begin{equation}\label{trans2}
s \bigl(\Psi_0(\theta)-\Psi_0(\theta-\xi) \bigr)\ge t
\bigl(\Psi_0(\mu-\theta+\xi)-\Psi_0(\mu-\theta) \bigr).
\end{equation}
By the concavity of $\Psi_0$ and the fact that $\Psi_1 = \Psi_0'$,
(\ref{trans1}) implies (\ref{trans2}) for all $\xi\ge0$.
Assuming the limit exists for the moment, define
%
\begin{equation}\label{pres-hor}
p^{{(\theta), \mathrm{hor}}}(s,t)=\lim_{n\to\infty} n^{-1} \log
Z^{(\theta), \mathrm{hor}}_{\lfloor{ns}\rfloor, \lfloor{nt}\rfloor}.
\end{equation}
In this next theorem the functions $\pmu(s,t)$ and $\Lambda_{s,t}(\xi
)$ are the
ones defined by (\ref{pmu}) and (\ref{finpieces}).
%
\begin{theorem}\label{hor-thm}
Let $s,t \ge0$, $0<\theta< \mu$ and $0 \le\xi< \theta$.\vspace*{8pt}

\textup{(a)} The limit in (\ref{pres-hor}) exists and is given by
%
\begin{equation}\label{pres-hor1}
p^{(\theta),\mathrm{hor}}(s,t)= \cases{ p^{(\theta)}(s,t), &\quad if (\ref{trans1}) holds,
\cr
\pmu(s,t), &\quad if (\ref{trans1}) fails.}
\end{equation}

\textup{(b)} The limit in (\ref{Lhor}) exists and is given by
%
\begin{equation}\label{pres-hor2}
\Lambda^{\mathrm{hor}}_{\theta, (s,t)}(\xi) = \cases{ s\lmgf_{\theta}(
\xi)- t\lmgf_{\mu-\theta}(-\xi), &\quad if (\ref{trans2}) holds,
\cr
\Lambda_{s,t}(\xi), &\quad if (\ref{trans2}) fails.}
\end{equation}
\end{theorem}
%
\begin{remark}\label{La-rem5}
We saw in (\ref{pmu}) that the limiting free energy $\pmu(s,t)$ of
the i.i.d. model is the minimal
free energy of the stationary models with the same bulk parameter $\mu$.
This link does \textit{not} extend to the l.m.g.f.'s: for $0<\xi<\mu$,
$\Lambda_{s,t}(\xi)< \Lambda_{\theta, (s,t)}(\xi)$
for all $\theta\in(0,\mu)$.
We observe this
at the end of the proof of Theorem~\ref{La-th-thm} in Section \ref{secpf2}.
\end{remark}

\section{The right tail rate function in the general case}
\label{secreg}

The
proofs of the results for the log-gamma model utilize regularity properties
of the rate function $\rf$ of (\ref{introJ}). These properties can be
proved in
some degree of generality, and we do so in this section. So now we consider
%
\begin{equation}\label{parf51}
Z_{\uvec} = \sum_{ x_\centerdot\in\Pi_{\uvec} }e^{\sum_{j=1}^{\vert u\vert_1}
\om(x_j)}
\end{equation}
as defined in the \hyperref[intro]{Introduction}, with $\uvec\in\bZ_+^d$, general
$d\ge2$,
and general i.i.d. weights $\{\om(\uvec)\}$.
%

We assume
%
\begin{equation}\label{Cramerass}
\exists \xi>0 \mbox{ such that } \bE \bigl(e^{\xi|\om(\uvec
)|} \bigr)< \infty.
\end{equation}
This guarantees the existence of a Cram\'er large deviation rate
function defined by
%
\begin{equation}\label{c1}
I(r) = -\lim_{\e\rightarrow0}\lim_{n\rightarrow\infty} n^{-1} \log\bP \Biggl
\{ n^{-1}\sum_{i=1}^{n}\om(
\uvec_i) \in(r-\e, r+\e ) \Biggr\}.
\end{equation}
(Above $\{\uvec_j\}$ are any distinct lattice points.)
We state first the existence theorem for the limiting point-to-point
free energy density.
We omit the proof because similar superadditive and approximation arguments
appear elsewhere in our paper, and refer to \cite{NicossThesis}. Let
us also
point out that assumption
(\ref{Cramerass}) is unnecessarily strong for this existence result,
but our objective
here is not to optimize on this point.
%
\begin{theorem}\label{substd} Assume (\ref{Cramerass}).
There exists an event $\Omega_0 \subseteq\Omega$ of full
$\P$-probability on which the convergence
%
\begin{equation}\label{limZ17}
p(\yvec) =\lim_{n\to\infty} n^{-1} \log Z_{\lfloor{n\yvec}\rfloor}
\end{equation}
happens simultaneously for all $\yvec\in\bR_+^d$.
Limit (\ref{limZ17}) holds also in $L^1(\bP)$.
As a function of $\yvec$, $p$ is concave and continuous on $\bR_+^d$.
\end{theorem}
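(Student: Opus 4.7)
The plan is a Kingman-style argument along rational directions combined with a sandwich/continuity argument to glue the direction-dependent exceptional sets into a single $\bP$-null set.

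\textbf{Superadditivity and Kingman.} Because the convention \eqref{parf} excludes the starting-point weight, paths from $\mathbf 0$ to $\vvec+\uvec$ can be cut at $\vvec$ without double-counting, yielding
\[
\log Z_{\vvec+\uvec}\,\geq\,\log Z_{\vvec} + \log Z_{\vvec,\vvec+\uvec}.
\]
For fixed $\yvec\in\bQ_+^d$, choose $k\in\bN$ with $k\yvec\in\bZ_+^d$ and apply Kingman's subadditive ergodic theorem to $n\mapsto -\log Z_{nk\yvec}$; assumption \eqref{Cramerass} combined with Jensen's inequality supplies the $L^1$ bound $\bE|\log Z_{\vvec}|\leq C|\vvec|_1$ required for Kingman. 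This produces the $\bP$-a.s.\ and $L^1$ limit $p(\yvec)$ along the subsequence $nk$, and a short interpolation fills in the remaining $n$, the gap being controlled by $O(1)$ many boundary weights and therefore $o(n)$. Superadditivity plus homogeneity $p(m\yvec)=mp(\yvec)$ makes $p$ concave on $\bQ_+^d$; a standard extension then gives a concave (hence continuous on the interior) function on all of $\bR_+^d$, with separate attention to the boundary faces, where the model degenerates to a single i.i.d.\ sum along an axis.

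\textbf{Simultaneous convergence: the main obstacle.} The Kingman limits hold on a $\yvec$-dependent event $\Omega_{\yvec}$ of full probability. To consolidate them, fix a countable dense $D\subset\bR_+^d$ (including rational axis points) and put $\Omega_0=\bigcap_{\yvec\in D}\Omega_{\yvec}$. The key is a $\bP$-a.s.\ modulus-of-continuity estimate: for $\yvec'\in D$ close to $\yvec$, put $\hat\vvec=\fl{n\yvec}\vee\fl{n\yvec'}$ and $\bar\vvec=\fl{n\yvec}\wedge\fl{n\yvec'}$, and combine two applications of the superadditivity bound (one from $\fl{n\yvec}$ up to $\hat\vvec$, one from $\bar\vvec$ through $\fl{n\yvec'}$) to sandwich $\log Z_{\fl{n\yvec}}$ against $\log Z_{\fl{n\yvec'}}$ plus correction partition functions on rectangles of sidelength $O(n|\yvec-\yvec'|_1)$. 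A Cram\'er-type upper bound drawn from \eqref{Cramerass}, using that the number of admissible subpaths in such a rectangle is $e^{O(n|\yvec-\yvec'|_1)}$ and that each is weighted by an i.i.d.\ sum with finite exponential moments, gives
\[
\limsup_{n\to\infty}\,n^{-1}\bigl|\log Z_{\fl{n\yvec}}-\log Z_{\fl{n\yvec'}}\bigr|\,\leq\,\kappa\bigl(|\yvec-\yvec'|_1\bigr),
\]
with $\kappa(\eta)\to 0$ as $\eta\to 0$, uniformly on compact subsets of $\bR_+^d$. Continuity of $p$ (inherited from concavity) then promotes the limit along $D$ to a simultaneous limit at every $\yvec\in\bR_+^d$. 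The $L^1$ convergence for each individual $\yvec$ comes from the Kingman step, and uniform integrability of $\{n^{-1}\log Z_{\fl{n\yvec}}\}$ under \eqref{Cramerass} extends it to the extended limit.

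The delicate point is the uniform modulus-of-continuity estimate: because the set of directions is uncountable, a naive union bound fails, so one must either quantify the Cram\'er estimate on a carefully chosen geometric chain of grids and invoke Borel--Cantelli to handle all sufficiently close pairs $(\yvec,\yvec')$ at once, or argue via an $L^1$ comparison that collapses deterministically in the limit. Once this piece is in place, concavity, continuity, and the identification of the limit as a single function $p$ on $\bR_+^d$ follow formally from density of $D$ and the established properties on $\bQ_+^d$.
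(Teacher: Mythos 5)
The paper itself omits the proof of this theorem, pointing to the thesis \cite{Nicos'sThesis} and to ``similar superadditive and approximation arguments'' elsewhere in the text (notably the sandwich with a single fixed path in Lemma~\ref{lm-J10} and the projection coupling in the proof of Theorem~\ref{J-thm1}), so a direct side-by-side comparison is not possible. Your overall plan --- Kingman along rational (sub-lattice) directions, homogeneity and superadditivity to get concavity, then a sandwich against a countable dense set of directions to glue the exceptional sets --- is the standard route and is consistent with those hints. Two technical points, however, need fixing before this sketch becomes a proof.

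First, the $\vee/\wedge$ sandwich as written does not close. From $\hat\vvec=\fl{n\yvec}\vee\fl{n\yvec'}$ and $\bar\vvec=\fl{n\yvec}\wedge\fl{n\yvec'}$ you obtain $\log Z_{\fl{n\yvec}}\le\log Z_{\hat\vvec}-\log Z_{\fl{n\yvec},\hat\vvec}$ and $\log Z_{\bar\vvec}\le\log Z_{\fl{n\yvec'}}-\log Z_{\bar\vvec,\fl{n\yvec'}}$, but superadditivity gives no upper bound on $\log Z_{\hat\vvec}$ in terms of $\log Z_{\fl{n\yvec'}}$, so the two inequalities never link $\log Z_{\fl{n\yvec}}$ to $\log Z_{\fl{n\yvec'}}$. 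Moreover $\hat\vvec=\fl{n(\yvec\vee\yvec')}$ need not be one of your Kingman directions when $\yvec\notin D$. The remedy is to pick, for each $\yvec$ in the closed quadrant, two straddling points $\yvec_-,\yvec_+\in D$ with $\yvec_-\le\yvec\le\yvec_+$ coordinatewise and $|\yvec_+-\yvec_-|_1$ small; then $\log Z_{\fl{n\yvec_+}}\ge\log Z_{\fl{n\yvec}}+\log Z_{\fl{n\yvec},\fl{n\yvec_+}}$ gives the upper bound and $\log Z_{\fl{n\yvec}}\ge\log Z_{\fl{n\yvec_-}}+\log Z_{\fl{n\yvec_-},\fl{n\yvec}}$ the lower one, and continuity of $p$ finishes the argument.

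Second, the ``delicate point'' you flag (the correction terms live on boxes whose upper corner $\fl{n\yvec}$ varies over an uncountable set) has a clean resolution that you do not carry out. The correction $\log Z_{\fl{n\yvec_-},\fl{n\yvec}}$ is bounded below by minus the maximum of $\sum_j|\om(x_j)|$ over all directed paths of length at most $d(n\epsilon+d)$ starting at $\fl{n\yvec_-}$ and ending anywhere in the cube $\fl{n\yvec_-}+[0,n\epsilon+1]^d$. The number of such paths is $e^{O(n\epsilon)}$, and by \eqref{Cramerass} and Cram\'er each one exceeds $Ln\epsilon$ with probability $e^{-c(L)n\epsilon}$ where $c(L)\to\infty$ as $L\to\infty$; choosing $L$ large and applying a union bound and Borel--Cantelli gives an a.s.\ eventual bound that is uniform over all endpoints in the cube, i.e.\ over all $\yvec\in[\yvec_-,\yvec_-+\epsilon\mathbf1]$ simultaneously. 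Since the cubes are indexed by the countable set $\yvec_-\in D$, $\epsilon\in\bQ_+$, the total exceptional set is still null. With these two repairs, and a brief justification of continuity of $p$ up to the boundary faces of $\bR_+^d$ (which concavity alone does not supply; a projection or single-path comparison as in the proof of Theorem~\ref{J-thm1} is the natural tool), the argument is complete.
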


Next the right-tail LDP. To avoid issues of vanishing probabilities and
infinite values of the rate, we make the following further assumption:
%
\begin{equation}\label{J-ass7}
\forall r <\infty\qquad \P\bigl\{\om(\mathbf0)>r\bigr\}>0.
\end{equation}

\begin{theorem}\label{J-thm1}
Assume (\ref{Cramerass}) and (\ref{J-ass7}).
Then for $\uvec\in\bR_+^d\setminus\{\mathbf0\}$ and $r \in\bR$,
the following
$\bR_+$-valued limit exists:
%
\begin{equation}\label{psidef}
\rf_{\uvec}(r) = -\lim_{n\rightarrow\infty} n^{-1}\log\bP\{ \log
Z_{\lfloor{n\uvec}\rfloor} \ge nr\}.
\end{equation}
As a function of $(\uvec,r)$,
$ \rf$ is convex and continuous on
$ (\bR^d_+\setminus\{\mathbf0\})\times\bR$.
$\rf_\uvec(r)=0$ if and only if $r \le p(\uvec)$.
\end{theorem}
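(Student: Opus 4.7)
The plan splits the theorem into three tasks: (i) existence of the limit in \eqref{psidef}, (ii) joint convexity and continuity on $(\bR_+^d\setminus\{\mathbf 0\})\times\bR$, and (iii) the characterization of the zero set. I would handle (i) and (ii) with a standard superadditive/approximation argument, and deduce (iii) from the law of large numbers (Theorem \ref{substd}) together with a concentration estimate.

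The key structural input is the path-splitting inequality. Because a directed path from $\mathbf 0$ to $\xvec+\yvec$ can be restricted to pass through the intermediate integer point $\xvec$, and because the $Z$-convention excludes the starting-vertex weight,
\[Z_{\xvec+\yvec}\;\ge\;Z_{\mathbf 0,\xvec}\cdot Z_{\xvec,\xvec+\yvec},\]
and the two factors depend on disjoint sets of weights, hence are independent. Translation invariance gives $Z_{\xvec,\xvec+\yvec}\stackrel{d}{=}Z_\yvec$, so
\[\bP\{\log Z_{\xvec+\yvec}\ge r+s\}\;\ge\;\bP\{\log Z_\xvec\ge r\}\cdot\bP\{\log Z_\yvec\ge s\}.\]
Taking $\xvec=m\uvec$, $\yvec=n\uvec$ for integer $\uvec$ yields superadditivity of $\log\bP\{\log Z_{n\uvec}\ge ns\}$ in $n$, from which Fekete's lemma gives existence of $\rf_\uvec(s)$ for integer $\uvec$. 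Applied instead with $\xvec=\fl{m\uvec'}$, $\yvec=\fl{n\vvec'}$, $r=ms_1$, $s=ns_2$ and dividing by $m+n$ with $m/(m+n)\to\lambda$, the same inequality produces the joint convexity bound
\[\rf_{\lambda\uvec'+(1-\lambda)\vvec'}(\lambda s_1+(1-\lambda)s_2)\;\le\;\lambda\rf_{\uvec'}(s_1)+(1-\lambda)\rf_{\vvec'}(s_2).\]
The rounding discrepancy between $\fl{m\uvec'}+\fl{n\vvec'}$ and $\fl{(m+n)(\lambda\uvec'+(1-\lambda)\vvec')}$ is bounded, and the corresponding change in $\log Z$ depends on only $O(1)$ weights, contributing $o(n)$ in the limit under \eqref{Cramerass}. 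Convexity on a dense set then extends to all of $(\bR_+^d\setminus\{\mathbf 0\})\times\bR$.

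Finiteness of $\rf_\uvec(r)$ comes from \eqref{J-ass7}: since $\bP\{\omega(\mathbf 0)>M\}>0$ for every $M$, forcing every weight along a single fixed path to exceed any chosen threshold gives $\bP\{\log Z_{\fl{n\uvec}}\ge nr\}\ge p^{cn}$ for some $p=p(r)>0$, so $\rf_\uvec(r)<\infty$ everywhere. Continuity on the open interior of $\bR^d_+\times\bR$ is then automatic from finite-valued convexity; continuity up to the lower-dimensional boundary faces of $\bR^d_+$ follows by a separate approximation exploiting the product structure that emerges when some coordinates of $\uvec$ vanish.

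For the zero-set characterization, the easy direction uses the LLN $n^{-1}\log Z_{\fl{n\uvec}}\to p(\uvec)$ from Theorem \ref{substd}: for $r<p(\uvec)$ the probability in \eqref{psidef} tends to $1$ and so $\rf_\uvec(r)=0$, with continuity extending this to $r=p(\uvec)$. The hard direction, strict positivity for $r>p(\uvec)$, is the main obstacle I anticipate. Convexity of $\rf_\uvec$ in $r$ is not enough on its own, because $\rf_\uvec$ could a priori vanish on an entire interval $[p(\uvec),r^*]$ before turning positive. To close this gap I would invoke a sub-exponential concentration inequality for $\log Z_{\fl{n\uvec}}$ available under \eqref{Cramerass} from \cite{Liu-Watbled-2009,watbled-arc}, of the form
\[\bP\{\log Z_{\fl{n\uvec}}\ge\bE\log Z_{\fl{n\uvec}}+n\e\}\;\le\;\exp(-c(\e)n)\]
with $c(\e)>0$. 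Combined with $n^{-1}\bE\log Z_{\fl{n\uvec}}\to p(\uvec)$, this forces $\rf_\uvec(p(\uvec)+\e)\ge c(\e)>0$ and completes the characterization.
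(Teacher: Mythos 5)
Your overall architecture matches the paper's proof: superadditivity via the path-restriction inequality plus a bounded rounding correction, Fekete for existence, the same superadditive computation for joint convexity, finiteness from assumption \eqref{J-ass7} via a single forced path, the LLN for $\rf_\uvec(r)=0$ when $r\le p(\uvec)$, and a sub-exponential concentration inequality (the paper uses \cite{Comets-Gregorio-arc}, you propose \cite{Liu-Watbled-2009, watbled-arc}; both serve the same purpose) to rule out vanishing of $\rf_\uvec$ on an interval to the right of $p(\uvec)$.

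The genuine gap is continuity at boundary faces of $\bR_+^d$. Finite-valued convexity only gives you continuity on the \emph{interior} of the effective domain and upper semicontinuity on the whole set; lower semicontinuity as some coordinates $u_i\to 0$ is where the substance lies, and ``a separate approximation exploiting the product structure that emerges when some coordinates vanish'' is a promissory note, not an argument. There is no clean product structure for $\uvec$ near but off a face; the issue is precisely to compare $Z_{\fl{n\uvec}}$ with $u_d$ small and positive against the $(d-1)$-dimensional $Z_{\fl{n\vvec}}$ at $v_d=0$. The paper does this by a nontrivial coupling: decompose the set of paths in $\Pi_{\fl{n\uvec}}$ according to the projected locations of their $\fl{nu_d}$ steps in the $\mathbf e_d$ direction, build for each such configuration a coupled $(d-1)$-dimensional environment $\widetilde\om$ in which the corresponding partial sums of $\log Z$ are bounded by a single $(d-1)$-dimensional $\log\widetilde Z$, and then apply a union bound over the configurations, whose cardinality is controlled by Stirling's formula and an error $F_d(\uvec)$ that vanishes as $u_d\to 0$. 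Iterating this coordinate by coordinate yields a lower bound $\rf_\uvec(r)\ge \rf_{\widetilde\uvec_{1,k}}(r-\text{error})-\text{error}$ in which $\widetilde\uvec_{1,k}$ sits in the interior of $\bR_+^k$, and the errors vanish in the limit, delivering lower semicontinuity. Without some version of this path-decomposition and entropy-counting step, the continuity claim on all of $(\bR_+^d\setminus\{\mathbf 0\})\times\bR$ is not established; you should either supply such a coupling or replace the continuity claim with interior continuity plus upper semicontinuity.
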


Let us also remark that the weight $\om(\mathbf0)$ at the origin is
immaterial:
the limit is the same for $Z^\square$, so
for $\uvec\in\bR_+^d\setminus\{\mathbf0\}$ and $r \in\bR$,
%
\begin{equation}\label{J-15}
\rf_{\uvec}(r) = -\lim_{n\rightarrow\infty} n^{-1}\log\bP\bigl\{
\log Z^\square_{\lfloor{n\uvec}\rfloor} \ge nr\bigr\}.
\end{equation}
We observe this at the end of the proof of Theorem \ref{J-thm1}.

With a further
assumption on the Cram\'er rate function of the weight distribution
defined in (\ref{c1}),
we can extend the continuity of $ \rf_{\uvec}$ to $\uvec=0$:
%
\begin{equation}\label{J-ass8}
\alpha_\infty=\lim_{x\nearrow\infty} x^{-1}I(x)<\infty.
\end{equation}
Equation (\ref{J-ass7}) is equivalent to requiring that $ I(x)<\infty$ for all
large enough $x$, so
of course (\ref{J-ass8}) requires (\ref{J-ass7}). The constant
$\alpha_\infty$ is the
limiting slope of $I$ at $\infty$ which exists by convexity. When
assumption (\ref{J-ass8})
is in force we define
%
\begin{equation}\label{Jzero}
\rf_{\mathbf0}(r) = \cases{ 0, &\quad $r\le0$,
\cr
\alpha_\infty r, &\quad
$r\ge0$.}
\end{equation}

\begin{theorem}\label{J-thm2} Under assumptions (\ref{Cramerass}) and (\ref
{J-ass8}), and with $\rf_{\mathbf0}$
defined by~(\ref{Jzero}),
$\rf_\uvec(r)$ is finite and continuous on $\bR_+^d\times\bR$.
\end{theorem}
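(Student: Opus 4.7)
The plan is to note that Theorem \ref{J-thm1} already gives finiteness and joint continuity of $\rf$ on $(\bR^d_+\setminus\{\mathbf 0\})\times\bR$, and the formula \eqref{Jzero} is manifestly finite.  So the only thing to verify is continuity at each boundary point $(\mathbf 0,r_0)$: for every sequence $(\uvec_n,r_n)\to(\mathbf 0,r_0)$ with $\uvec_n\ne\mathbf 0$ one must check $\rf_{\uvec_n}(r_n)\to\rf_{\mathbf 0}(r_0)$.  I will sandwich $\rf_\uvec(r)$ between matching upper and lower bounds and treat the three regimes $r_0>0$, $r_0<0$, $r_0=0$ separately.

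\textbf{Upper bound.}  For $\uvec\ne\mathbf 0$ I would fix any axial directed path $x_\centerdot\in\Pi_{\fl{n\uvec}}$; its $m=\abs{\fl{n\uvec}}_1$ weights $\om(x_1),\dotsc,\om(x_m)$ are i.i.d., so $Z_{\fl{n\uvec}}\ge\exp\bigl(\sum_{j=1}^m\om(x_j)\bigr)$.  Cram\'er's theorem applied to the i.i.d.\ sum then yields, for $r>\abs{\uvec}_1\bE\om$,
\[
\rf_\uvec(r)\;\le\;\abs{\uvec}_1\,I\!\bigl(r/\abs{\uvec}_1\bigr).
\]
Because $I(x)/x$ is nondecreasing with limit $\alpha_\infty$ by \eqref{J-ass8}, the right side equals $r\cdot I(r/\abs{\uvec}_1)/(r/\abs{\uvec}_1)\to\alpha_\infty r_0$ as $(\uvec,r)\to(\mathbf 0,r_0)$ when $r_0>0$, yielding $\limsup_n\rf_{\uvec_n}(r_n)\le\rf_{\mathbf 0}(r_0)$.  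For $r_0<0$ I would invoke continuity of $p$ at $\mathbf 0$ (Theorem \ref{substd}) together with $p(\mathbf 0)=0$ (which holds because $Z_{\mathbf 0}=1$): this forces $r_n<p(\uvec_n)$ eventually and hence $\rf_{\uvec_n}(r_n)=0$.  For $r_0=0$ I would use that $\rf_\uvec$ is nondecreasing on $[p(\uvec),\infty)$ (immediate from convexity plus its vanishing on $(-\infty,p(\uvec)]$) to squeeze $\rf_{\uvec_n}(r_n)\le\rf_{\uvec_n}(\e)$ whenever $r_n\le\e$, and then let $\e\downarrow 0$ using the $r_0>0$ case.

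\textbf{Lower bound and main obstacle.}  Only the case $r_0>0$ is nontrivial, since elsewhere $\rf_{\uvec_n}(r_n)\ge 0=\rf_{\mathbf 0}(r_0)$.  I would fix $\xi\in(0,\alpha_\infty)$ and note that convex duality gives $\alpha_\infty=\sup\{\xi\ge 0:\lmgf(\xi)<\infty\}$ where $\lmgf(\xi):=\log\bE e^{\xi\om}$, so $\lmgf(\xi)<\infty$.  Jensen's inequality applied to $x\mapsto x^\xi$ (with a case split at $\xi=1$), combined with the multinomial bound $\#\Pi_{\fl{n\uvec}}\le d^m$, gives
\[
\bE Z_{\fl{n\uvec}}^{\,\xi}\;\le\;d^{m(\xi\vee 1)}\,e^{m\lmgf(\xi)},
\]
and the Chebyshev bound $\bP\{\log Z_{\fl{n\uvec}}\ge nr\}\le e^{-\xi nr}\bE Z_{\fl{n\uvec}}^{\,\xi}$ then yields
\[
\rf_\uvec(r)\;\ge\;\xi r-\abs{\uvec}_1\bigl((\xi\vee 1)\log d+\lmgf(\xi)\bigr).
\]
Sending $(\uvec,r)\to(\mathbf 0,r_0)$ with $\xi$ held fixed gives $\liminf_n\rf_{\uvec_n}(r_n)\ge\xi r_0$, and letting $\xi\nearrow\alpha_\infty$ afterwards gives $\liminf\ge\alpha_\infty r_0=\rf_{\mathbf 0}(r_0)$.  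The main delicate point---and the reason the argument must proceed in this two-step order---is that $\lmgf(\xi)$ may blow up as $\xi\nearrow\alpha_\infty$; the penalty $\abs{\uvec}_1\lmgf(\xi)$ has to be killed by $\abs{\uvec}_1\to 0$ separately for each fixed $\xi<\alpha_\infty$, before pushing $\xi$ to $\alpha_\infty$.  Matching the upper and lower bounds in all three regimes then completes the proof.
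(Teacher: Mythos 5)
Your proposal is correct, and the two approaches agree on the upper-semicontinuity half: bounding $Z_{\fl{n\uvec}}$ below by a single path and invoking Cram\'er gives $\rf_\uvec(r)\le\abs{\uvec}_1 I(r/\abs{\uvec}_1)$, which is exactly the paper's $\kappa_{\abs{\uvec}_1}(r)$, and the case analysis you run to show this tends to $\rf_{\mathbf 0}(r_0)$ is the same check the paper leaves to the reader.

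The genuine difference is in the lower-semicontinuity half. The paper reuses the dimensional-reduction coupling of inequality \eqref{lb9}, already established in the proof of Theorem \ref{J-thm1}: after sending $\eta\to 0$ it yields $\rf_\uvec(r)\ge\rf_{u_1\mathbf e_1}\bigl(r-\sum_{i\ge 2}F_i(\uvec)\bigr)-\sum_{i\ge 2}F_i(\uvec)$, and since $\rf_{u_1\mathbf e_1}=\kappa_{u_1}$ and $F_i(\uvec)\to 0$, the one-dimensional computation finishes the job. You instead prove the lower semicontinuity directly from Chebyshev plus path counting: for fixed $\xi\in(0,\alpha_\infty)$ the bound $\bE Z_{\fl{n\uvec}}^{\xi}\le d^{m(\xi\vee 1)}e^{m\lmgf(\xi)}$ (where $m=\abs{\fl{n\uvec}}_1$) gives $\rf_\uvec(r)\ge\xi r-\abs{\uvec}_1\bigl((\xi\vee 1)\log d+\lmgf(\xi)\bigr)$, and then you send $\abs{\uvec}_1\to 0$ before $\xi\nearrow\alpha_\infty$. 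This is self-contained and avoids reaching back into the proof of Theorem \ref{J-thm1}; the one extra ingredient you invoke is the standard convex-duality identity $\alpha_\infty=\sup\{\xi:\lmgf(\xi)<\infty\}$, which holds since $\alpha_\infty$ is the recession slope of $I=\lmgf^*$. The order of limits you flag (shrink $\abs{\uvec}_1$ first, then push $\xi\nearrow\alpha_\infty$) is indeed the crucial point, since $\lmgf$ may diverge at $\alpha_\infty$. Both arguments are sound; the paper's is shorter given that the coupling machinery is already in place, while yours is more elementary and localized to this theorem.
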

%
\begin{remark}\label{lg-rem1} Assumption (\ref{J-ass8}) is in particular valid for the
log-gamma model. For $Y^{-1}\sim\operatorname{Gamma}(\mu)$ the Cram\'er rate function
for $\om=\log Y$ is
%
\begin{equation}\label{logcram}\hspace*{23pt}
I_{\mu}(r) 
=-r\Psi^{-1}_0(-r)-
\log\Gamma\bigl(\Psi^{-1}_0(-r)\bigr)+\mu r+ \log\Gamma (
\mu),\qquad r\in\bR.
\end{equation}
The limiting slope on the right is $\alpha_\infty=\mu$, while the
limiting slope on the left
would be $\lim_{r\to-\infty}I'(r)=-\infty$. In this case $\rf_{\mathbf0}(r)$ is also the
``rate function'' for the single weight at the origin
%
\begin{equation}\label{lg-tail1}
\rf_{\mathbf0}(r)= -\lim_{n\rightarrow\infty} n^{-1}\log\bP\{ \log Y
\ge nr\}.
\end{equation}
\end{remark}

The remainder of this section proves Theorems \ref{J-thm1} and
\ref{J-thm2}, and then
we prove two further lemmas for later use.
\begin{pf*}{Proof of Theorem \ref{J-thm1}}
For $m,n\in\bR_+$, let $\xvec_{m,n} \in\{0,1\}^{d}$ so that
$\lfloor(m+\allowbreak n)\uvec\rfloor= \lfloor{m\uvec}\rfloor+\lfloor
{n\uvec}\rfloor+ \xvec_{m,n}$.
By superadditivity, independence and shift invariance,
%
\begin{eqnarray}\label{temp18}
&&\bP\bigl\{ \log Z_{\lfloor{(m+n)\uvec}\rfloor} \ge{(m+n)r}\bigr\}
\nonumber\\[-8pt]\\[-8pt]
&&\qquad \ge\bP\{ \log Z_{\lfloor{m\uvec}\rfloor}\ge mr\} \bP\{ \log Z_{\lfloor{n\uvec}\rfloor} \ge nr
\}
\bP\{ \log Z_{\xvec_{m,n}} \ge0\}. 
\nonumber
\end{eqnarray}
By assumption (\ref{J-ass7}) there is a uniform lower bound
\mbox{$\bP\{ \log Z_{\xvec_{m,n}} \ge0\}\ge\rho>0$}.
Thus $t(n)= \log\bP\{ \log Z_{\lfloor{n\uvec}\rfloor}\ge nr \} $
is superadditive with a small uniformly bounded correction.
Assumption (\ref{J-ass7}) implies
that
$t(n)>-\infty$ for all $n\ge n_0$.
Consequently by superadditivity the rate function
%
\begin{equation} \label{Fekete}
\rf_{\uvec}(r) = -\lim_{n\rightarrow\infty} n^{-1}\log\bP\{ \log
Z_{\lfloor{n\uvec}\rfloor} \ge nr\}
\end{equation}
exists for $\uvec= (u_1,\ldots,u_d) \in\bR_+^d$ and $r\in\bR$.
The limit in (\ref{Fekete}) holds also as $n\rightarrow\infty$
through real values, not just integers.

Similarly we get convexity of $ \rf$
in $(\uvec, r)$. Let $\lambda\in(0,1)$ and assume $(\uvec, r)
=\lambda(\uvec_1, r_1)+(1-\lambda)(\uvec_2, r_2)$. Then
\begin{eqnarray*}
&&
n^{-1}\log\bP\{ \log Z_{\lfloor{n\uvec}\rfloor} \ge nr \} \\
&&\qquad\ge\lambda(\lambda
n)^{-1} \log\bP\{ \log Z_{\lfloor{n\lambda
\uvec_1}\rfloor} \ge n\lambda r_1\}
\\
&&\qquad\quad{}+ (1-\lambda) \bigl((1-\lambda) n\bigr)^{-1} \log\bP\bigl\{ \log
Z_{\lfloor{n(1-\lambda) \uvec_2}\rfloor} \ge n(1-\lambda) r_2\bigr\}+o(1)
\end{eqnarray*}
and letting $n \rightarrow\infty$ gives
%
\begin{equation}
\rf_{\uvec}(r) \le\lambda\rf_{\uvec_1}(r_1) + (1-
\lambda) \rf_{\uvec_2}(r_2).
\end{equation}

Finiteness of $\rf$ follows from (\ref{J-ass7}), so now we know $\rf
$ to
be a finite, convex function on $(\bR_+^d\setminus\{\mathbf0\})
\times\bR$.
This implies that $\rf$ is continuous in the interior of
$(\bR_+^d\setminus\{\mathbf0\}) \times\bR$ and
upper semicontinuous on the whole set $(\bR_+^d\setminus\{\mathbf0\}
) \times\bR$
\cite{rock-ca}, Theorems 10.1 and 10.2.

The law of large numbers for the free energy implies $\rf_{\uvec }(r)=0
$ for $r< p(\uvec)$ and then by continuity for $r\le p(\uvec)$. With a
minor adaptation of \cite{Comets-Gregorio-arc}, Proposition~3.1(b), we
get a concentration inequality: given $\uvec$, for $\e>0$ there exists
a constant $c>0$ such that
%
\begin{equation}\qquad
\bP\bigl\{ \vert{\log Z_{\lfloor{n\uvec}\rfloor}} - \bE\log Z_{\lfloor
{n\uvec}\rfloor} \vert\ge n\e\bigr\}\le2
\exp\bigl( -c\e^2 n\bigr) \qquad\mbox{for all $n\in\bN$.}
\end{equation}
\label{321}
Since $n^{-1} \bE\log Z_{\lfloor{n\uvec}\rfloor} \to p(\uvec)$,
this implies that
$\rf_{\uvec}(r)>0 $ for $r>p(\uvec)$.

We do a coupling proof for lower semicontinuity. Let $(\uvec,r)\to
(\vvec,s)$ in $(\bR_+^d\setminus\{\mathbf0\})\times\bR$. If each
coordinate $v_i>0$, then we have continuity
$\rf_\uvec(r)\to\rf_\vvec(s)$ because convexity already gives
continuity in the interior. Thus we may assume that some coordinates of
$\vvec$ are zero. Since coordinates can be permuted without changing
$\rf$, let us assume that $\vvec= (v_1,v_2,\ldots, v_k,0,\ldots,0) $
for a fixed $1\le k<d$ where $v_1,\ldots,v_k>0$. If eventually $\uvec$
is also of the form $\uvec= (u_1,u_2,\ldots, u_k,0,\ldots,0) $ for the
same $k$, then we are done by convexity-implied continuity again, this
time in the interior of $(\bR_+^k\setminus\{\mathbf0\})\times\bR$.

The remaining case is the one where $u_1,\ldots, u_k>0$ and
$(u_{k+1},\ldots, u_d)\to\mathbf0$. We develop a family of couplings that
eliminates these $d-k$ last coordinates one by one, starting with
$u_d$, and puts
us back in the interior case with continuity.
Denote a lower-dimensional
projection by $\uvec_{1,k} = (u_1,u_2,\ldots, u_k)$.

The set of paths $\Pi_{\lfloor{n\uvec}\rfloor}$ is decomposed
according to the locations
of the $\lfloor{nu_d}\rfloor$ unit jumps in the $\mathbf e_d$-direction.
The projections of these locations form a vector $\pi$ from the set
\begin{eqnarray*}
\Lambda_{\lfloor{n\uvec}\rfloor} &=& \bigl\{ \pi= \bigl\{ \xvec^{i} \bigr
\}_{
i=0}^{\lfloor{nu_d}\rfloor+1}\in\bigl(\bZ_+^{d-1}
\bigr)^{\lfloor
{nu_d}\rfloor+2}\dvtx
\\
&&\hspace*{5pt}\mathbf0 = \xvec^{0} \le\xvec^{1} \le\cdots\le
\xvec^{\lfloor
{nu_d}\rfloor+1}= \lfloor{n\uvec_{1,d-1}}\rfloor \bigr\}.
\end{eqnarray*}
The partition function then decomposes according to the following jump
locations:
%
\begin{equation}\label{partdeco}
Z_{\lfloor{n\uvec}\rfloor} = \sum_{\pi\in\Lambda_{\lfloor
{n\uvec}\rfloor} } Z_{(\mathbf0, 0), (\xvec^{1}, 0)}
\prod_{i=1}^{\lfloor{nu_d}\rfloor} Z^{\square}_{(\xvec^{i}, i ), (\xvec^{i+1}, i )}
\equiv\sum_{\pi
\in\Lambda_{\lfloor{n\uvec}\rfloor} } Z_\pi,
\end{equation}
where the last equality defines the $d-1$-dimensional partition
functions $Z_\pi$.

For a fixed $\pi$, define a new environment $\wt\om$ indexed by $\bZ_+^{d-1}$ with
this recipe:
\begin{longlist}[(iii)]
\item[(i)] For $0\le i\le\lfloor{nu_d}\rfloor$: for $\yvec\in\bZ_+^{d-1}$
such that $\xvec^{i}\le\yvec\le\xvec^{i+1}$
but $\yvec\ne\xvec^{i}$, set $\wt\om(\yvec)=\om(\yvec,i)$.
\item[(ii)] $\wt\om(\mathbf0)=\om(\mathbf0,0)$ and for $1\le
i\le\lfloor{nu_d}\rfloor$, $\wt\om(\lfloor{n\uvec_{1,d-1}}\rfloor+ i\mathbf
{e}_{d-1}) =
\om(\xvec^{i}, i )$.
\item[(iii)] Pick all other $\wt\om(\yvec)$ independently of
everything else.
\end{longlist}
Now, keeping $\pi$ fixed, we project the paths down to $\bZ_+^{d-1}$
and create a partition function (marked by a tilde)
in the new environment $\wt\om$:
%
\begin{eqnarray}\label{coupling}
\log Z_{\pi} &=& \log Z_{(\mathbf0, 0), (\xvec^{1}, 0)} + \sum
_{i=1}^{\lfloor{nu_d}\rfloor} \log Z^{\square}_{(\xvec^{i}, i ),
(\xvec^{i+1},
i )}
\nonumber
\\
&=& \sum_{i=0}^{\lfloor{nu_d}\rfloor} \log Z_{(\xvec^{i}, i ),
(\xvec^{i+1}, i
)}
+ \sum_{i=1}^{\lfloor{nu_d}\rfloor}\om\bigl(
\xvec^{i}, i \bigr)
\nonumber\\[-8pt]\\[-8pt]
&=& \sum_{i=0}^{\lfloor{nu_d}\rfloor} \log\wt
Z_{\xvec^{i}, \xvec
^{i+1}} + \sum_{i=1}^{\lfloor{nu_d}\rfloor} \wt\om
\bigl(\lfloor{n\uvec_{1,d-1}}\rfloor+ i\mathbf {e}_{d-1}\bigr)
\nonumber
\\
&\le&\log\widetilde Z_{\lfloor{n\uvec_{1,d-1}}\rfloor+ \lfloor
{nu_d\mathbf{e}_{d-1}}\rfloor}.
\nonumber
\end{eqnarray}

Introduce the continuous
functions ($1\le i < d$)
%
\begin{equation}\label{patheses}
F_{i}(\uvec) = \sum_{j=1}^{i-1}
\bigl( (u_j + u_{i})\log(u_j +
u_{i}) - u_j \log u_j - u_{i}
\log u_{i} \bigr).
\end{equation}
Counting the number of ways to decompose
the length from $0$ to $\lfloor{nu_i}\rfloor$ into $\lfloor
{nu_d}\rfloor+1$
segments and Stirling's formula give
%
\begin{eqnarray}\label{pathcard}\quad
m_0 &=& \vert\Lambda_{\lfloor{n\uvec}\rfloor}\vert= \prod
_{1\le i
\le d-1}\pmatrix{ {\lfloor{nu_i}\rfloor+
\lfloor{nu_d}\rfloor}
\cr
{\lfloor{nu_d}\rfloor+1}}= \exp
\bigl\{nF_{d}(\uvec) + o(n)\bigr\}
\nonumber\\[-8pt]\\[-8pt]
&\le& \exp\bigl\{nF_{d}(\uvec) + n\eta\bigr\},
\nonumber
\end{eqnarray}
where the last inequality is valid for large $n$
and we introduced a small $\eta>0$ that we can send to zero after
limits in $n$ have
been taken.
By a union bound and the coupling (\ref{coupling}) separately for each
$\pi\in\Lambda_{\lfloor{n\uvec}\rfloor}$,
\begin{eqnarray*}
-\rf_{\uvec}(r) &\le& \varlimsup_{n\rightarrow\infty}n^{-1} \log\sum
_{\pi\in\Lambda_{\lfloor{n\uvec}\rfloor}} \bP \{ \log Z_{\pi
} \ge nr - \log
m_0 \}
\\
&\le& \lim_{n\rightarrow\infty} \biggl( \frac{\log m_0}n
\\
&&\hspace*{27.7pt}{} + n^{-1}\log \bP \bigl\{ \log\widetilde{Z}_{\lfloor{n\uvec_{1,d-1}}\rfloor+
\lfloor{nu_d\mathbf{e}_{d-1}}\rfloor} \ge nr -
nF_{d}(\uvec) -n\eta \bigr\} \biggr)
\\
&=& F_{d}(\uvec) - \rf_{\uvec_{1,d-1}+ u_d\mathbf
{e}_{d-1}} \bigl(r-F_{d}(
\uvec)-\eta \bigr).
\end{eqnarray*}
In the last step above a little correction as in (\ref{temp18}) replaces
$\lfloor{n\uvec_{1,d-1}}\rfloor+ \lfloor{nu_d\mathbf
{e}_{d-1}}\rfloor$ with
$\lfloor{n\uvec_{1,d-1} + nu_d\mathbf{e}_{d-1}}\rfloor$.

Let $\widetilde{\uvec}_{1, d}=\uvec$ and for $1\le i<d$,
\[
\widetilde{\uvec}_{1, i}= \uvec_{1, i}+\sum
_{j=i+1}^{ d} u_j\mathbf
{e}_{i} \in\bZ_+^{i}.
\]
Proceeding inductively, we get the lower bound
%
\begin{equation}\label{lb9}
\rf_{\uvec}(r) \ge\rf_{\widetilde{\uvec}_{1,k}} \biggl(r-\sum
_{k+1\le i \le d} \bigl(F_{i}(\uvec)-\eta \bigr) \biggr) -\sum
_{k+1\le i
\le d}F_{i}(\uvec).
\end{equation}
On the right-hand side we have a rate function $\rf_{\widetilde{\uvec
}_{1,k}}$
with $\widetilde{\uvec}_{1,k}\to\vvec_{1,k}$ in the
interior of $\bR_+^k$. Thus we have continuity. We can first let $\eta
\searrow0$.
Then let $(\uvec,r)\to(\vvec,s)$. Note that $u_i\to0$ implies
$F_i(\uvec)\to0$.
Together all this gives the lower semicontinuity
\[
\varliminf_{(\uvec,r)\to(\vvec,s)} \rf_{\uvec}(r) \ge\rf_{\widetilde{\vvec}_{1,k}}(s) =
\rf_{\vvec
}(s).
\]

Now we know $\rf$ is continuous on all of $(\bR_+^d\setminus\{
\mathbf0\}) \times\bR$.

Let us observe limit (\ref{J-15}). From one side we have
\[
\bP\bigl\{ \log Z^\square_{\lfloor{n\uvec}\rfloor} \ge nr\bigr\} \ge\bP\{ \log
Z_{\lfloor{n\uvec}\rfloor} \ge nr\}\bP\bigl\{\om (\mathbf0) \ge0\bigr\}.
\]
From the other, pick a coordinate $u_i>0$, and for each $n$ an integer
$n<m_n<n+o(n)$ such that $2\mathbf e_i+\lfloor{n\uvec}\rfloor\le
\lfloor{m_n\uvec}\rfloor$.
For each $n$ fix a directed path $\{x^n_j\}$ from
$2\mathbf e_i+\lfloor{n\uvec}\rfloor$ to $\lfloor{m_n\uvec}\rfloor$.
Inequality
\[
\om(\mathbf e_i)+ \log Z^\square_{2\mathbf e_i, 2\mathbf
e_i+\lfloor{n\uvec}\rfloor} +\sum
_j \om\bigl(x^n_j\bigr)
\le\log Z_{\lfloor{m_n\uvec}\rfloor}
\]
gives
\[
\bP\bigl\{ \log Z^\square_{\lfloor{n\uvec}\rfloor} \ge nr\bigr\} \bP \biggl\{
\om(\mathbf e_i)+ \sum_j \om
\bigl(x^n_j\bigr) \ge0 \biggr\} \le\bP\{ \log
Z_{\lfloor{m_n\uvec}\rfloor} \ge nr\}.
\]
Assumption (\ref{J-ass7}) and the continuity of $\rf$ give the conclusion.
\end{pf*}\eject
\begin{pf*}{Proof of Theorem \ref{J-thm2}}
It remains to prove continuity at $(\mathbf0, s)$.
Let $(\uvec,r)\to(\mathbf0,s)$. Define the right-tail Cram\'er rate function
for $a>0$, $x\in\bR$:
\begin{eqnarray*}
\kappa_a(x) &=& -\lim_{n\rightarrow\infty} n^{-1} \log\bP \Biggl
\{ n^{-1}\sum_{i=1}^{\lfloor{na}\rfloor}\om
(x_i) \ge nx \Biggr\} \\
&=&\cases{ aI(x/a), &\quad $x\ge a\bE\bigl[\om(
\mathbf0)\bigr]$,
\vspace*{2pt}\cr
0, &\quad $x\le a\bE\bigl[\om(\mathbf0)\bigr]$.}
\end{eqnarray*}
Check that as $(a,x)\to(0,s)$, $\kappa_a(x)\to\rf_{\mathbf0}(s)$
defined by (\ref{Jzero}).

For upper semicontinuity, bound $Z_{\lfloor{n\uvec}\rfloor}$ below
by a single path
\[
\rf_\uvec(r)\le-\lim_{n\rightarrow\infty} n^{-1} \log\bP \Biggl\{
n^{-1}\sum_{i=1}^{\vert\lfloor{n\uvec
}\rfloor\vert_1}\om
(x_i) \ge nr \Biggr\} =\kappa_{\vert\uvec\vert_1}(r).
\]

For lower semicontinuity, permute the coordinates so that $u_1>0$ as
$\uvec\to\mathbf0$.
Apply (\ref{lb9}) after $\eta$ has been taken to zero:
\[
\rf_{\uvec}(r) \ge\rf_{u_1\mathbf{e}_{1}} \biggl(r-\sum
_{2\le i \le
d} F_{i}(\uvec) \biggr) -\sum
_{2\le i \le d}F_{i}(\uvec).
\]
Since $ \rf_{u_1\mathbf{e}_{1}}=\kappa_{u_1}$ we get the lower
semicontinuity.
\end{pf*}

Finally two lemmas for later use. The next one allows more general lattice
sequences for the right-tail LDP.
%
\begin{lemma}\label{lm-J10} Let $\yvec\in(0,\infty)^d$ and $\uvec_n\in\bZ_+^d$
be a sequence such that
$n^{-1}\uvec_n\to\yvec$. Then for $r\in\bR$,
%
\begin{equation}\label{J10}
\lim_{n\rightarrow\infty} n^{-1}\log\bP\{ \log Z_{\uvec_n} \ge nr\} =
- \rf_{\yvec}(r).
\end{equation}
\end{lemma}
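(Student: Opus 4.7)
The plan is to sandwich the lattice sequence $\uvec_n$ between deterministic points $\fl{n\yvec^-}$ and $\fl{n\yvec^+}$ with $\yvec^\pm$ slightly below and above $\yvec$ coordinatewise, and then transfer the right-tail rate estimate using the continuity of $\rf$ already proved in Theorem \ref{J-thm1}. Fix $\e>0$ and choose $\yvec^\pm \in (0,\infty)^d$ with $\yvec^- < \yvec < \yvec^+$ coordinatewise and $\abs{\yvec^\pm-\yvec}_1<\e$. Since $n^{-1}\uvec_n\to\yvec$, for all large $n$ we have $\fl{n\yvec^-}\le \uvec_n\le \fl{n\yvec^+}$ and moreover $\abs{\fl{n\yvec^+}-\uvec_n}_1\le C\e n$, $\abs{\uvec_n-\fl{n\yvec^-}}_1\le C\e n$ for a suitable constant $C$.

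The key algebraic tool is the concatenation bound: whenever $\vvec\le\uvec$, restricting the sum in $Z_\uvec$ to paths that pass through $\vvec$ gives $Z_\uvec\ge Z_\vvec\cdot Z_{\vvec,\uvec}$, and these two factors are \emph{independent} since (with the convention of \eqref{parf}) the weights entering $Z_\vvec$ lie on $\{\,x:\mathbf 0<x\le \vvec\,\}$ while those entering $Z_{\vvec,\uvec}$ lie on $\{\,x:\vvec<x\le\uvec\,\}$, two disjoint sets. Apply this with $\vvec=\uvec_n,\ \uvec=\fl{n\yvec^+}$ to obtain, for any $\eta>0$,
\[
\bP\{\log Z_{\fl{n\yvec^+}}\ge n(r-\eta)\}\ge \bP\{\log Z_{\uvec_n}\ge nr\}\cdot \bP\{\log Z_{\uvec_n,\,\fl{n\yvec^+}}\ge -n\eta\}.
\]
Bound $Z_{\uvec_n,\,\fl{n\yvec^+}}$ below by the product of weights along a single fixed directed lattice path from $\uvec_n$ to $\fl{n\yvec^+}$. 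Its logarithm is a sum of $m_n\le C\e n$ i.i.d.\ copies of $\om$; by the weak law of large numbers (under \eqref{Cramerass}, $\bE\abs{\om}<\infty$), this sum is at least $-n\eta$ with probability tending to $1$, provided $\e$ is chosen small enough relative to $\eta$ so that $C\e(\abs{\bE\om}+1)<\eta$. Taking $n^{-1}\log$ and $\limsup_n$ then yields
\[
\varlimsup_{n\to\infty} n^{-1}\log\bP\{\log Z_{\uvec_n}\ge nr\}\le -\rf_{\yvec^+}(r-\eta),
\]
and sending $\eta\searrow 0$ and $\yvec^+\to \yvec$, continuity of $\rf$ from Theorem \ref{J-thm1} gives $\limsup\le -\rf_\yvec(r)$.

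The matching lower bound is symmetric: using $Z_{\uvec_n}\ge Z_{\fl{n\yvec^-}}\cdot Z_{\fl{n\yvec^-},\,\uvec_n}$ and independence,
\[
\bP\{\log Z_{\uvec_n}\ge nr\}\ge \bP\{\log Z_{\fl{n\yvec^-}}\ge n(r+\eta)\}\cdot \bP\{\log Z_{\fl{n\yvec^-},\,\uvec_n}\ge -n\eta\},
\]
the second factor again tending to $1$ by the single-path argument, leading to $\liminf\ge -\rf_{\yvec^-}(r+\eta)\to -\rf_\yvec(r)$. Combining the two bounds proves \eqref{J10}. The only step requiring a bit of care is the control of the correction partition functions $Z_{\uvec_n,\,\fl{n\yvec^+}}$ and $Z_{\fl{n\yvec^-},\,\uvec_n}$, i.e., showing that the reindexing of $\uvec_n$ to a nearby lattice point of the form $\fl{n\yvec^\pm}$ costs only $o(n)$ in the log-partition function; everything else reduces to continuity of $\rf$ and the basic law of large numbers for an i.i.d.\ sum.
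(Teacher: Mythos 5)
Your proof is correct and follows essentially the same strategy as the paper's: sandwich $\uvec_n$ between two nearby lattice points of the form $\fl{\cdot\,\yvec}$, bound the correction partition functions below by a product of i.i.d.\ weights along a single directed path, and invoke the continuity of $\rf$ from Theorem \ref{J-thm1}. The one cosmetic difference is how the sandwiching points are produced: the paper reparameterizes along the same direction, choosing $\ell_n,m_n$ with $\fl{\ell_n\yvec}\le\uvec_n\le\fl{m_n\yvec}$ and $n-\ell_n,\,m_n-n=o(n)$, so the correction paths have length $o(n)$ and $\P\{\om\ge 0\}>0$ from \eqref{J-ass7} already makes their contribution subexponential without any limiting in $\e$; you instead fix nearby directions $\yvec^\pm$, accept correction paths of length $O(\e n)$, and control them via the weak LLN, then send $\e$ (and $\eta$) to zero. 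Both routes are valid given the standing hypotheses; the paper's reindexing avoids the extra $\e,\eta$ bookkeeping, while your version uses the more familiar $\fl{n\yvec^\pm}$ endpoints at the cost of tracking the ratio $\e/\eta$.
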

\begin{pf} Let us use assumption (\ref{J-ass7}) again.
Since the coordinates of $\uvec_n$ and $\lfloor{n\yvec}\rfloor$ are
increasing
to $\infty$,
for each $n$ we can find $\ell_n$ and $m_n$ such that
$\lfloor{\ell_n\yvec}\rfloor\le\uvec_n\le\lfloor{m_n\yvec
}\rfloor$ and in such a way
that $ n-\ell_n$, $n-m_n$ are eventually $o(n)$. For each $n$ fix
directed paths
$\{x_{n,i}\}_{0\le i\le K_n} $ from $\lfloor{\ell_n\yvec}\rfloor$
to $\uvec_n$
and $\{x'_{n,j}\}_{0\le j\le K'_n}$ from $\uvec_n$ to $\lfloor
{m_n\yvec}\rfloor$. Then
\[
Z_{\lfloor{\ell_n\yvec}\rfloor} \cdot W_n \le Z_{\uvec_n} \le
Z_{\lfloor{m_n\yvec}\rfloor} \cdot\bigl(W'_n\bigr)^{-1},
\]
where
\[
\log W_n = \sum_{1\le i\le K_n}
\om(x_i) \quad\mbox{and}\quad \log W'_n = \sum
_{1\le i\le K'_n}\om\bigl(x'_i\bigr).
\]
Assumption $n^{-1}\uvec_n\to\yvec$ implies that $K_n$ and $K'_n$ are
also $o(n)$.

The estimates we need follow. For example,
\[
\bP\{ \log Z_{\lfloor{m_n\yvec}\rfloor} \ge nr\} \ge\bP\bigl\{\log W'_n
\ge0\bigr\} \bP \{ \log Z_{\uvec_n} \ge nr\}
\]
and then by assumption (\ref{J-ass7}) and the continuity of the rate function,
\[
\varlimsup_{n\to\infty} n^{-1}\log\bP\{ \log Z_{\uvec_n} \ge
nr\} \le\lim_{n\rightarrow\infty} n^{-1}\log\bP\{ \log Z_{\lfloor
{m_n\yvec}\rfloor} \ge
nr\} = - \rf_{\yvec}(r).
\]
Similarly for the complementary lower bound on $\varliminf$.
\end{pf}
%
\begin{lemma}\label{sumsind}
Suppose that for each $n$, $L_n$ and $Z_n$ are independent random
variables. Assume that the limits
%
\begin{eqnarray}
\label{cradev}
 {\lambda}(s) &=& -\lim_{n\rightarrow\infty}n^{-1}\log\bP\{ L_n
\ge ns \},
\\
\label{rightdev}
\phi(s) &=& -\lim_{n\rightarrow\infty}n^{-1}\log\bP
\{ Z_n \ge ns \}
\end{eqnarray}
exist and are finite for all $s\in\bR$. Assume that $\lambda
(a_\lambda)=\phi(a_\phi)=0$ for some
$a_\lambda$, $a_{\phi}\in\bR$. Assume also that $\lambda$ is continuous.
Then for $r\in\bR$
%
\begin{eqnarray}
\label{sumsind0}
&&
\lim_{n\rightarrow\infty} \frac{\log\bP\{ L_n + Z_n \ge nr \}}{n} \nonumber\\[-8pt]\\[-8pt]
&&\qquad=\cases{ \displaystyle -
\inf_{a_{\lambda}\le s \le r-a_{\phi}}\bigl\{ \phi(r-s)+ {\lambda}(s) \bigr\},
&\quad $r >
a_{\phi}+a_{\lambda}$,
\vspace*{1pt}\cr
0, &\quad $r \le a_{\phi}+a_{\lambda}$.}\nonumber
\end{eqnarray}
%
\end{lemma}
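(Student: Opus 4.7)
The plan is to prove matching upper and lower bounds for $n^{-1}\log\bP\{L_n + Z_n \ge nr\}$, yielding limit $-J_r$ with $J_r := \inf_{s\in\bR}\{\lambda(s)+\phi(r-s)\}$, and then to show that $J_r$ agrees with the restricted infimum in \eqref{sumsind0}. The lower bound is immediate from independence: for any fixed $s$,
\[
\bP\{L_n + Z_n \ge nr\} \ge \bP\{L_n \ge ns\}\,\bP\{Z_n \ge n(r-s)\},
\]
so $\varliminf n^{-1}\log\bP\{L_n + Z_n \ge nr\} \ge -\lambda(s)-\phi(r-s)$, and optimizing over $s$ gives $\ge -J_r$.

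For the upper bound, fix $\delta,M>0$ with $M$ large and $\delta$ small, and slice $\bR$ into the slabs $[k\delta,(k+1)\delta)$ for $-K\le k<K$, $K:=\lceil M/\delta\rceil$, together with two tails. A union bound on the value of $L_n/n$, combined with independence and the elementary observation that $L_n < -nM$ together with $L_n + Z_n \ge nr$ forces $Z_n \ge n(r+M)$, yields
\[
\bP\{L_n + Z_n \ge nr\} \le \bP\{Z_n \ge n(r+M)\} + \bP\{L_n \ge nM\} + \sum_{-K\le k<K}\bP\{L_n \ge nk\delta\}\,\bP\{Z_n \ge n(r-(k+1)\delta)\}.
\]
Since the sum has finitely many terms, $n^{-1}\log$ followed by $\varlimsup$ converts the right-hand side into the negative of the minimum of the corresponding rates. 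Monotonicity of $\lambda$ and $\phi$ (forced by their definitions), together with $\lambda(a_\lambda)=\phi(a_\phi)=0$, implies $\lambda(s)=0$ for $s\le a_\lambda$ and $\phi(t)=0$ for $t\le a_\phi$; plugging $s=-M$ into the definition of $J_r$ gives $\phi(r+M)\ge J_r$ for $M\ge -a_\lambda$, and $s=M$ gives $\lambda(M)\ge J_r$ for $M\ge r-a_\phi$, so the two boundary rates are harmless. For each slab, the assumed continuity of $\lambda$ (uniform on the compact $[-M,M]$) gives $\lambda(k\delta)\ge\lambda((k+1)\delta)-\omega_\lambda(\delta)$, hence $\lambda(k\delta)+\phi(r-(k+1)\delta)\ge J_r-\omega_\lambda(\delta)$. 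Sending $n\to\infty$, $\delta\to 0$, and then $M\to\infty$ produces $\varlimsup n^{-1}\log\bP\{\cdot\}\le -J_r$.

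Reducing the unrestricted infimum $J_r$ to the interval $[a_\lambda,r-a_\phi]$ is a short monotonicity check: for $s<a_\lambda$, $\lambda(s)+\phi(r-s)=\phi(r-s)\ge\phi(r-a_\lambda)$, matching the value at $s=a_\lambda$; the symmetric argument handles $s>r-a_\phi$. When $r\le a_\lambda+a_\phi$, taking $s=a_\lambda$ forces $r-s\le a_\phi$, both rates vanish, and $J_r=0$, matching the trivial upper bound $\bP\{\cdot\}\le 1$ combined with the lower bound already established.

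The principal obstacle is in the upper bound: a naive discretization produces an infinite sum of factors $\bP\{L_n\ge nk\delta\}\bP\{Z_n\ge n(r-(k+1)\delta)\}$ that cannot be controlled termwise without extra hypotheses such as $\lambda(s)\to\infty$ at infinity. The resolution is to truncate at level $M$ and exploit the fact that monotonicity alone forces the tail rates $\lambda(M)$ and $\phi(r+M)$ to exceed $J_r$ once $M$ is large, so that the continuity of $\lambda$ only needs to be invoked on the compact interval $[-M,M]$.
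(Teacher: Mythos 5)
Your proof is correct, and it uses essentially the same mechanism as the paper: a union bound over a finite partition of the range of $L_n/n$, independence to factor the probabilities, and the assumed continuity of $\lambda$ to pass from the discrete minimum to the infimum as the mesh refines. The only real difference is cosmetic: the paper places its partition endpoints at exactly $q_0=a_\lambda$ and $q_m=r-a_\phi$, so the two boundary terms $\bP\{Z_n\ge n(r-q_0)\}$ and $\bP\{L_n\ge nq_m\}$ are controlled directly by $\lambda(q_0)=0$ and $\phi(r-q_m)=0$ without any auxiliary limit, whereas you partition a large symmetric interval $[-M,M]$ and then send $M\to\infty$, using the monotonicity of $\lambda,\phi$ (and their vanishing below $a_\lambda,a_\phi$) to show the two tail terms already dominate $J_r$ once $M$ is large. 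Both choices arrive at the same place; the paper's is a bit tighter since it avoids the extra $M$-limit, but your version is equally valid and arguably makes the role of monotonicity more explicit. The reduction from the unrestricted infimum $J_r$ to the restricted one over $[a_\lambda,r-a_\phi]$, and the handling of the degenerate case $r\le a_\lambda+a_\phi$, are both correctly argued.
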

\begin{pf}
The lower bound $\ge$ follows from
\[
\bP\{ L_n + Z_n \ge nr \} \ge\bP\{ L_n \ge
ns \} \bP\bigl\{ Z_n \ge n(r-s) \bigr\}.
\]
Since an upper bound $0$ is obvious, it remains to show the upper bound
for the case $r > a_{\phi}+a_{\lambda}$.
Take a finite partition $ a_{\lambda}= q_0 < \cdots< q_m = r -a_{\phi
} $.
Then use a union bound and independence:
\begin{eqnarray*}
&&\bP\{ L_n + Z_n \ge nr \}
\\
&&\qquad\le\bP\{ L_n + Z_n \ge nr, L_n < n
q_0 \}
\\
&&\qquad\quad{} +\sum_{i=0}^{m-1}\bP\{ L_n +
Z_n \ge nr, nq_i \le L_n \le
nq_{i+1}\} +\bP\{ L_n \ge nq_m\}
\\
&&\qquad\le\bP\bigl\{ Z_n \ge n (r-q_0) \bigr\} + \sum
_{i=0}^{m-1}\bP\bigl\{ Z_n \ge
n(r -q_{i+1})\bigr\} \bP\{ L_n \ge nq_{i}\}
\\
&&\qquad\quad{} +\bP\{ L_n \ge nq_m\}.
\end{eqnarray*}
From this,
\begin{eqnarray*}
&&\varlimsup_{ n \rightarrow\infty} n^{-1} \log\bP\{ L_n +
Z_n \ge nr \}
\\
&&\qquad \le- \min \Bigl\{ \phi(r-q_0), \min_{0\le i \le m-1}\bigl[\phi
(r-q_{i+1})+ {\lambda}(q_{i})\bigr], \lambda(q_m)
\Bigr\}. %
\end{eqnarray*}
Note that $\lambda(q_0)=\phi(r-q_m)=0$, refine the partition and use the
continuity of $\lambda$.
\end{pf}

\section{Proofs for the i.i.d. log-gamma model}
\label{secpf1}

In this section we prove the results of Section \ref{seciid-lg}.
Throughout this section the dimension $d=2$ and the weights satisfy
$Y_{i,j}^{-1}\sim\operatorname{Gamma}(\mu)$ as in (\ref{Gamu}). As before,
for $(s,t)\in\bR_+^2\setminus\{(0,0)\}$ define
the function $\rf_{s,t}$ by the limit
%
\begin{equation}\label{J2}
\rf_{s,t}(r) = -\lim_{n\rightarrow\infty} n^{-1}\log\bP\{ \log
Z_{\lfloor{ns}\rfloor, \lfloor{nt}\rfloor} \ge nr\},\qquad r\in\bR.
\end{equation}
At the origin set
%
\begin{equation}\label{Jzero3}
\rf_{0,0}(r)=\cases{ 0, &\quad $r\le0$,
\cr
\mu r, &\quad $r\ge0$.}
\end{equation}
Then, as observed in Remark \ref{lg-rem1}, the function $ \rf_{s,t}(r)$ is finite
and continuous at all $(s,t,r)\in\bR_+^2\times\bR$.

We begin with a lemma that proves
Theorem \ref{llnTimo}.
%
\begin{lemma}\label{Jzero-lm} For $(s,t)\in\bR^2_+$
the limiting free energy of (\ref{pmu}) satisfies
%
\begin{equation}\label{pmu4}
\pmu(s,t) = \inf_{0 < \theta< \mu}\bigl\{-s\Psi_0(\theta) - t
\Psi_0(\mu-\theta)\bigr\}.
\end{equation}
The infimum
is achieved at some $\theta$ because $\Psi_0(0+)=-\infty$.
\end{lemma}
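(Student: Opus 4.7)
The plan is to prove the two matching inequalities. Let $f_{s,t}(\theta) = -s\Psi_0(\theta) - t\Psi_0(\mu-\theta)$ and $q(s,t) = \inf_{\theta \in (0,\mu)} f_{s,t}(\theta)$. Existence of an interior minimizer $\theta^{*} \in (0,\mu)$ is immediate: $f_{s,t}$ is continuous on $(0,\mu)$ with $f_{s,t}(0^+) = f_{s,t}(\mu^-) = +\infty$, since $\Psi_0(0^+) = -\infty$. Strict convexity of $f_{s,t}$ (because $\Psi_1$ is strictly decreasing and positive, so $\Psi_0$ is strictly concave) makes $\theta^{*}$ unique, and it solves the characteristic equation $s\Psi_1(\theta^{*}) = t\Psi_1(\mu-\theta^{*})$.

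For the upper bound $\pmu(s,t) \le q(s,t)$, I will use a stochastic coupling. Since Gamma$(\theta)$ is stochastically dominated by Gamma$(\mu)$ when $\theta<\mu$, their reciprocals are oppositely ordered. By an inverse-CDF coupling I realize the i.i.d.\ and stationary log-gamma models on a common probability space so that $U_{i,0} \ge Y_{i,0}$ and $V_{0,j} \ge Y_{0,j}$ hold pointwise, with identical bulk weights. Monotonicity of the path-sum \eqref{parf2} in the weights gives $Z^{(\theta)}_{m,n} \ge Z_{m,n}$ a.s. Passing to $n^{-1}\log$ limits via \eqref{th-lln} and \eqref{p1} yields $\pmu(s,t) \le \pth(s,t) = f_{s,t}(\theta)$ for each $\theta \in (0,\mu)$, and taking the infimum gives the upper bound.

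For the matching lower bound, I will combine the exit-point decomposition \eqref{deco-1} with the stationary LLN \eqref{th-lln}. The sum in \eqref{deco-1} is sandwiched between its largest term and $(m+n)$ times that largest term. Using the i.i.d.\ LLN for $\log U_{i,0}$, shift invariance, and uniform-in-$c$ convergence of $n^{-1}\log Z^{\square}_{(\fl{nc},1),(\fl{ns},\fl{nt})} \to \pmu(s-c,t)$ (from continuity of $\pmu$ in Theorem \ref{substd}), I obtain the variational identity
\be
f_{s,t}(\theta) = \sup_{c \in [0,s]}\bigl\{-c\Psi_0(\theta) + \pmu(s-c,t)\bigr\} \,\vee\, \sup_{d \in [0,t]}\bigl\{-d\Psi_0(\mu-\theta) + \pmu(s, t-d)\bigr\}.
\ee
Specializing to $\theta = \theta^{*}$, at least one of the two suprema equals $f_{s,t}(\theta^{*}) = q(s,t)$. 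If, say, the first does and is attained at some $c^{*} > 0$, then $\pmu(s-c^{*},t) = -(s-c^{*})\Psi_0(\theta^{*}) - t\Psi_0(\mu-\theta^{*}) = f_{s-c^{*}, t}(\theta^{*})$, forcing $\theta^{*}$ to also minimize $f_{s-c^{*}, t}$. But its unique critical-point equation $(s-c^{*})\Psi_1(\theta) = t\Psi_1(\mu-\theta)$, combined with $s\Psi_1(\theta^{*}) = t\Psi_1(\mu-\theta^{*})$, yields $c^{*}\Psi_1(\theta^{*}) = 0$, impossible since $c^{*} > 0$ and $\Psi_1 > 0$. Hence the supremum must be attained at $c = 0$ (and analogously $d = 0$), giving $\pmu(s,t) = f_{s,t}(\theta^{*}) = q(s,t)$.

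The main obstacle is the uniform-in-$c$ convergence $n^{-1}\log Z^{\square}_{(\fl{nc},1),(\fl{ns},\fl{nt})} \to \pmu(s-c, t)$, needed to interchange the maximum over exit points $k$ in \eqref{deco-1} with the $n \to \infty$ limit. This will be handled via the continuity of $\pmu$ from Theorem \ref{substd}, discretization of $c$ on a finite mesh, and monotonicity of the partition function in the starting point.
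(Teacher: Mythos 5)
Your proposal is correct but takes a genuinely different route from the paper, even though both arguments rest on the same two ingredients: the stationary LLN \eqref{th-lln} and the exit-point variational identity \eqref{th-lln2} (which you re-derive from \eqref{deco-1} and whose coarse-graining details both you and the paper handle sketchily; your suggested discretize-plus-monotonicity fix is the right one). The differences are in how the formula is then extracted. The paper \emph{specializes to $s=t$}, uses the symmetry $\pmu(s,t)=\pmu(t,s)$ to collapse the two suprema in \eqref{th-lln2} into a single one with coefficient $\Psi_0(\theta)\wedge\Psi_0(\mu-\theta)$, changes variables $v=\Psi_0(\theta)$, and reads the resulting identity as a Legendre transform $f^*(v)=-t\Psi_0(\mu-\Psi_0^{-1}(v))$ of the extended convex function $f(s)=-\pmu(s,t)$; it then computes $(f^*)'$ at the endpoints to justify restricting the biconjugation and recovers $\pmu$ via $f=f^{**}$. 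You instead keep $(s,t)$ general. Your upper bound $\pmu\le q$ via monotone coupling of Gamma weights is correct and transparent (though it is redundant: the choice $c=0$ or $d=0$ in the variational identity already gives $f_{s,t}(\theta)\ge\pmu(s,t)$ for every $\theta$, which is how the inequality is implicit in the paper). Your lower bound is the genuinely new idea: plug the unique minimizer $\theta^*$ of $f_{s,t}$ into the variational identity, observe that an interior maximizer $c^*>0$ (using continuity of $c\mapsto -c\Psi_0(\theta^*)+\pmu(s-c,t)$ to guarantee attainment) would force $\theta^*$ to \emph{also} minimize $f_{s-c^*,t}$ (here you need $\pmu\le q$ at the point $(s-c^*,t)$, which you have), and then the two critical-point equations $s\Psi_1(\theta^*)=t\Psi_1(\mu-\theta^*)$ and $(s-c^*)\Psi_1(\theta^*)=t\Psi_1(\mu-\theta^*)$ collide to give $c^*\Psi_1(\theta^*)=0$, impossible. (The endpoint $c^*=s$ is equally ruled out, since $f_{0,t}$ is strictly increasing on $(0,\mu)$ and so has no interior minimizer; you implicitly cover this.) Thus the sup must sit at $c^*=0$, and $\pmu(s,t)=f_{s,t}(\theta^*)=q(s,t)$. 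This contradiction-from-uniqueness argument neatly replaces the paper's convex-duality bookkeeping and avoids the $s=t$ specialization, at the cost of invoking the upper bound at the auxiliary point $(s-c^*,t)$ and the strict concavity of $\Psi_0$ for uniqueness of $\theta^*$.
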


\begin{pf} The proof anticipates some themes of the later LDP proof,
but in a simpler context. We already recorded the law of large numbers
(\ref{th-lln}).
%
%
\begin{figure}

\includegraphics{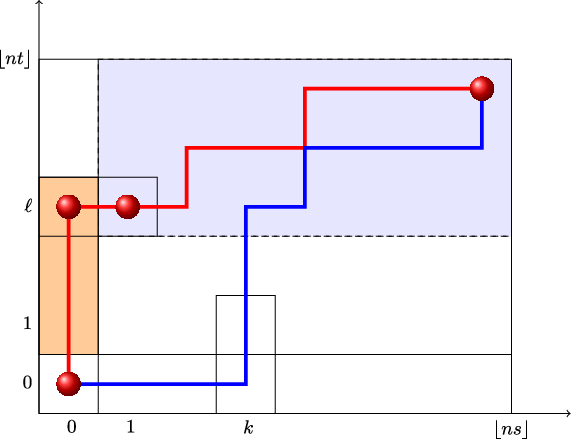}

\caption{Graphical representation of the decomposition in equation
(\protect\ref{deco}).}\label{figdec}
\end{figure}
The decomposition (see Figure \ref{figdec})
%
\begin{eqnarray}\label{deco}
Z^{(\theta)}_{\lfloor{ns}\rfloor, \lfloor{nt}\rfloor} 
&=& \sum
_{k=1}^{\lfloor{ns}\rfloor} \Biggl(\prod
_{i=1}^{k}U_{i,0} \Biggr)Z^{\square}_{(k,1),(\lfloor{ns}\rfloor,\lfloor{nt}\rfloor)}
\nonumber\\[-8pt]\\[-8pt]
&&{}+ \sum_{\l=1}^{\lfloor{nt}\rfloor} \Biggl(\prod
_{j=1}^{\l
}V_{0,j}
\Biggr)Z^{\square}_{(1,\l),(\lfloor{ns}\rfloor,\lfloor{nt}\rfloor)}\hspace*{-28pt}\nonumber
\end{eqnarray}
from (\ref{deco-1}) gives asymptotically
\begin{eqnarray*}
&&
\lim_{n\to\infty} n^{-1}\log Z^{(\theta)}_{\lfloor{ns}\rfloor,
\lfloor{nt}\rfloor}
\\
&&\qquad= \lim_{n\to\infty} \Biggl\{ \max_{1\le k\le\lfloor{ns}\rfloor
} \Biggl( n^{-1}
\sum_{i=1}^{k}\log U_{i,0} +
n^{-1}\log Z^{\square}_{(k,1),(\lfloor{ns}\rfloor,\lfloor
{nt}\rfloor)} \Biggr)
\\
&&\hspace*{28pt}\qquad\quad{} \vee\max_{1\le\ell\le\lfloor{nt}\rfloor} \Biggl( n^{-1} \sum
_{j=1}^{\ell}\log V_{0,j} + n^{-1}
\log Z^{\square}_{(1,\ell),(\lfloor{ns}\rfloor,\lfloor
{nt}\rfloor)} \Biggr) \Biggr\}.
\end{eqnarray*}
This can be coarse-grained with readily controllable errors of sums of
independent variables.
We omit the details since similar arguments appear elsewhere in the paper.
The conclusion is the alternative formula
%
\begin{eqnarray}
\label{th-lln2}
&&\lim_{n\to\infty} n^{-1}\log Z^{(\theta)}_{\lfloor{ns}\rfloor,
\lfloor{nt}\rfloor}
\nonumber\\
&&\qquad = \sup_{0\le a\le s}\bigl\{ -a\Psi_0(\theta) +\pmu(s-a,t) \bigr
\} \\
&&\qquad\quad{}\vee \sup_{0\le b\le t}\bigl\{ -b\Psi_0(\mu-\theta)
+\pmu(s,t-b) \bigr\}.
\nonumber
\end{eqnarray}

Take $s=t$, combine (\ref{th-lln}) and (\ref{th-lln2}), and use the symmetry
$\pmu(s,t)=\pmu(t,s)$ to get
\[
-t \bigl(\Psi_0(\theta)+\Psi_0(\mu-\theta) \bigr) =
\sup_{0\le a\le t} \bigl\{ -a \bigl(\Psi_0(\theta)\wedge
\Psi_0(\mu-\theta) \bigr) +\pmu(t-a,t) \bigr\}.
\]
Take $\theta\in(0,\mu/2]$ so that $\Psi_0(\theta)\le\Psi_0(\mu
-\theta)$
($\Psi_0$ is strictly increasing) and set $a=t-s$:
\[
-t \Psi_0(\mu-\theta) = \sup_{0\le s\le t} \bigl\{ s
\Psi_0(\theta) +\pmu(s,t) \bigr\}.
\]
Turn this into a convex duality through the change of variable
$v=\Psi_0(\theta)$:
%
\begin{equation}\label{dual33}\qquad
-t\Psi_0\bigl(\mu-\Psi_0^{-1}(v)\bigr)=
\sup_{0\le s\le t} \bigl\{ sv + \pmu (s,t) \bigr\},\qquad v \in\bigl(-\infty,
\Psi_0(\mu/2)\bigr].
\end{equation}

It follows from the limit definition of $\pmu(s,t)$ that it is concave
and continuous
in $s\in[0,t]$. Extend
$f(s)=-\pmu(s,t)$ to a lower semicontinuous convex function of $s\in
\bR$ by setting $f(s)=\infty$
for $s\notin[0,t]$. Then (\ref{dual33}) tells us that
\[
f^*(v)=-t\Psi_0\bigl(\mu-\Psi_0^{-1}(v)
\bigr) \qquad\mbox{for } v \in \bigl(-\infty, \Psi_0(\mu/2)\bigr].
\]
We can differentiate to get $\lim_{v\searrow-\infty} (f^*)'(v)=0$ and
$(f^*)'(\Psi_0(\mu/2))=t$. These derivative values imply that for
$s\in[0,t]$, the
supremum in the double convex duality can be restricted as follows:
\[
f(s)=\sup_{v \in(-\infty, \Psi_0(\mu/2)]} \bigl\{ vs - f^*(v)\bigr\}.
\]
Undoing the change of variables turns this equation into (\ref{pmu4})
which is thereby proved.
\end{pf}

The next lemma gives left tail bounds strong enough to imply
$ I_{s,t}(r)=\infty$ for $r< \pmu(s,t)$, and the same result for the
stationary
model. The proof is a straightforward coarse-graining argument.
We do not expect the results to be optimal.
%
\begin{lemma}\label{left-lm}
Fix $0< a < 1$. Then there exist constants $0<c,C<\infty$ that depend
on the parameters
given below, so that the following estimates hold:
\begin{longlist}[(a)]
\item[(a)] For $(s,t) \in(0,\infty)^2$ and $r < p(s,t)$,
%
\begin{equation}\label{left-5}
\bP\{\log Z_{\lfloor{ns}\rfloor, \lfloor{nt}\rfloor} \le nr \}
\le Ce^{-cn^{1+a}} \qquad\mbox{for all $n
\ge1$}.
\end{equation}
\item[(b)] For $(s,t)=\alpha(\Psi_1(\mu-\theta), \Psi_1(\theta))$ for some $\alpha>0$,
parallel to the characteristic direction, and $r< p^{(\theta)}(s,t)$,
%
\begin{equation}\label{left-stat}
\bP\bigl\{\log Z^{(\theta)}_{\lfloor{ns}\rfloor, \lfloor{nt}\rfloor} \le nr \bigr\} \le
Ce^{-cn^{1+a}} \qquad\mbox{for all $n\ge1$}.
\end{equation}
\end{longlist}
\end{lemma}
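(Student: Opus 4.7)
The plan is a coarse-graining argument, applicable to both parts. Fix $a\in(0,1)$ and set $K=\fl{n^{1-a}}$. Partition the target rectangle $[\mathbf 0,\fl{n(s,t)}]$ into $K$ consecutive diagonal sub-rectangles using corner points $\vvec_0=\mathbf 0,\vvec_1,\dots,\vvec_K=\fl{n(s,t)}$ with $\vvec_k-\vvec_{k-1}\approx(\fl{n^a s},\fl{n^a t})$.

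For part~(a), restricting the paths in the definition of $Z_{\fl{ns},\fl{nt}}$ to those passing through every corner $\vvec_k$ gives the pathwise lower bound
\[
\log Z_{\fl{ns},\fl{nt}}\;\ge\;\sum_{k=1}^K \log Z^{(k)},
\]
where $Z^{(k)}:=Z_{\vvec_{k-1},\vvec_k}$ are i.i.d.\ (they depend on disjoint bulk weights), each distributed (up to boundary effects) as $Z_{\fl{n^a s},\fl{n^a t}}$. Thus $\bP\{\log Z_{\fl{ns},\fl{nt}}\le nr\}\le\bP\{\sum_k\log Z^{(k)}\le nr\}$. By Theorem~\ref{substd} and the Comets--Gregorio concentration displayed in the proof of Theorem~\ref{J-thm1}, $\bE\log Z^{(k)}=n^a p(s,t)(1+o(1))$ and each $\log Z^{(k)}$ has sub-Gaussian concentration at scale $n^a$. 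For $r<p(s,t)$, apply a Chernoff bound to the independent sum,
\[
\bP\bigl\{\textstyle\sum_{k=1}^K \log Z^{(k)}\le nr\bigr\}\;\le\; e^{\lambda nr}\bigl(\bE e^{-\lambda \log Z^{(1)}}\bigr)^K,\qquad\lambda>0,
\]
and optimize over $\lambda$, using a sharper-than-sub-Gaussian left-tail bound on $\bE e^{-\lambda \log Z^{(1)}}$ that comes from the log-concavity of the distribution of $\om=\log Y$ (density $\propto e^{-\mu y-e^{-y}}$ on $\bR$), to obtain the super-exponential rate $e^{-cn^{1+a}}$.

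For part~(b), the characteristic-direction case, use the exchange representation \eqref{sum-1} together with Proposition~\ref{burke-pr} to write $\log Z^{(\theta)}_{\fl{ns},\fl{nt}}=\sum_{j=1}^{\fl{nt}}\log V_{0,j}+\sum_{i=1}^{\fl{ns}}\log U_{i,\fl{nt}}$ as a sum of two i.i.d.\ sums of log-Gamma random variables. A union bound of the form $\{\log Z^{(\theta)}\le nr\}\subset\{A\le nr_V\}\cup\{B\le n(r-r_V)\}$ combined with Cram\'er's theorem for each i.i.d.\ sum gives rate $e^{-cn}$; the characteristic-direction condition \eqref{char3} is precisely what equates the second derivatives of the two Cram\'er rate functions at their means, so a balanced split near $r_V=-t\Psi_0(\mu-\theta),\,r_U=-s\Psi_0(\theta)$ yields a strictly positive rate for $r<\pth(s,t)$. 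A parallel coarse-graining, applied separately to each of the two i.i.d.\ sums, upgrades the exponent from $n$ to $n^{1+a}$.

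\emph{Main obstacle.} A direct application of Comets--Gregorio to the full rectangle already produces $e^{-cn}$, so the coarse-graining is useful only if the per-sub-rectangle Laplace transform $\bE e^{-\lambda \log Z^{(1)}}$ admits a sub-Gaussian estimate whose effective variance parameter grows sublinearly in the sub-rectangle size $n^a$. Establishing such a sharpened moment estimate, by exploiting the log-concavity of $\om$ (and, for part~(b), the stationary boundary structure combined with Cram\'er applied to each of the two i.i.d.\ sums), is the main technical step.
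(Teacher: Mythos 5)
Your proposal diverges from the paper's proof in a fundamental way, and both branches have gaps.

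\textbf{Part (a).} You decompose the rectangle into $K\approx n^{1-a}$ consecutive sub-rectangles of size $\approx n^a$ along a single diagonal and lower-bound $\log Z_{\fl{ns},\fl{nt}}$ by the i.i.d.\ sum $\sum_{k=1}^K\log Z^{(k)}$. This is a legitimate pathwise lower bound, but then the Chernoff step stalls exactly where you say it does: the Comets--Gregorio concentration gives a sub-Gaussian mgf with variance proxy of order $n^a$ for each $\log Z^{(k)}$, and summing $K=n^{1-a}$ of those only yields variance proxy of order $n$, hence $e^{-cn}$, not $e^{-cn^{1+a}}$. Closing this gap would require proving that the left tail of $\log Z$ at scale $n^a$ is already \emph{strictly sub-linear-variance} sub-Gaussian, which is essentially an instance of what the lemma itself is asserting---so the argument is circular as stated. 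You flag this honestly as the ``main obstacle,'' but you do not resolve it, and the log-concavity of $\om$ does not by itself produce the required sharpening.

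\textbf{Part (b).} This branch is worse than a gap: the plan is incorrect. Writing $\log Z^{(\theta)}_{\fl{ns},\fl{nt}}=\sum_j\log V_{0,j}+\sum_i\log U_{i,\fl{nt}}$ and applying a union bound over a split point $r_V$, with Cram\'er applied to each i.i.d.\ sum, cannot give a rate better than $e^{-cn}$. Cram\'er's theorem gives the \emph{exact} exponential order $e^{-cn}$ for left-tail deviations of an i.i.d.\ sum of $\sim n$ log-gammas, and ``coarse-graining'' an i.i.d.\ sum does not improve that exponent---there is nothing to amplify. The superexponential rate in part (b) really does come from the strong negative dependence between the two i.i.d.\ sums (a fact the paper emphasizes), and a union bound cannot see this dependence; it can only give the worse of the two marginal rates. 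The appeal to ``equating second derivatives of the two Cram\'er rates'' at the characteristic direction is a red herring: it is relevant for the $n^{2/3}$ fluctuation scaling of the variance, not for upgrading the exponent of a large-deviation probability from $n$ to $n^{1+a}$.

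\textbf{What the paper actually does.} The paper's construction is based on roughly $n^a$ \emph{parallel}, pairwise-disjoint diagonal tubes $\Delta^n_0,\dots,\Delta^n_M$ (Figure~\ref{lb-fig3}), each carrying an independent partition function $Z^\Delta_i$ that is a product of $\sim n/m$ i.i.d.\ rectangle partition functions; the full partition function dominates a sum $\sum_i Z^\Delta_i$ (after pricing in a small number of weights off the tubes). The decisive step is then
\[
\bP\Bigl\{\log\textstyle\sum_{i=0}^M Z^{\Delta}_i\le n(r+\e)\Bigr\}
\le\bP\{\log Z^{\Delta}_0\le n(r+\e)\}^{M},
\]
because a sum of nonnegative terms is small only if every term is small, and the terms are independent. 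Each factor is $\le e^{-cn}$ by a one-dimensional Cram\'er estimate for the i.i.d.\ product over rectangles, and raising to the power $M\sim n^a$ gives $e^{-cn^{1+a}}$. The paper never needs a sharper-than-linear mgf for the block partition functions: the amplification of the exponent comes entirely from the \emph{independence across parallel tubes}, not from improved per-block concentration. Your single-chain decomposition throws away precisely this parallelism, which is why you end up needing (and lacking) a finer per-block moment bound. If you want to salvage part (a), replace the single diagonal chain by $\sim n^a$ disjoint parallel chains and use the sum-dominated-by-max trick; the same construction also handles part (b) (the paper sketches (a) as a variant of (b)).
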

\begin{pf}
We give a proof of (b) with some details left sketchy. Part (a) has a
similar proof.
We bound $Z^{(\theta)}_{\lfloor{ns}\rfloor, \lfloor{nt}\rfloor}$
from below\vspace*{1pt} by considering
a subset of lattice paths,
arranged in a collection of i.i.d. partition functions over subsets of
the rectangle.

The choice of
$(s,t)$ implies that
$ 
p^{(\theta)}(s,t) = p(s,t).
$
Fix $0< \e< (p^{(\theta)}(s,t) - r)/4 $. Fix $m \in\bN$ large
enough so that $m(s\wedge t) \ge1$ and
%
\begin{equation}\label{grainsize}
\bE\log Z_{\lfloor{ms}\rfloor, \lfloor{mt}\rfloor} > m(r + 2\e).
\end{equation}
%

Let $B^{k,\ell}_{a,b} = \{ a, \ldots, a+ k-1 \}\times\{b, \ldots,
\ell+b -1\}$ denote
the $k\times\ell$ rectangle with lower left corner at $(a,b)$. For
$i, \ell\ge0 $ define
pairwise disjoint $\lfloor{ms}\rfloor\times\lfloor{mt}\rfloor$ rectangles
\[
B^i_{\ell} = B^{\lfloor{ms}\rfloor, \lfloor{mt}\rfloor}_{(\ell
+i)\lfloor{ms}\rfloor-\ell+1,
\ell\lfloor{mt}\rfloor+1}.
\]
Define a diagonal union of these rectangles by
$ 
\Delta_i =
\bigcup_{\ell\ge0} B^i_\ell
$, $i\ge0$; see Figure~\ref{lb-fig3}.

\begin{figure}

\includegraphics{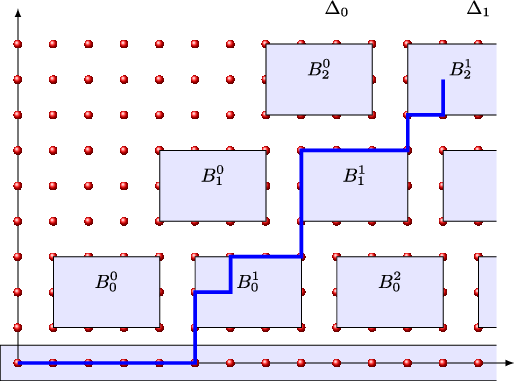}

\caption{The $\lfloor{ms}\rfloor\times\lfloor{mt}\rfloor$
rectangles and the diagonals
$\Delta_i$
in the proof of Lemma \protect\ref{left-lm}. The thickset line is a
lattice path that is counted in $Z_1$.}\label{lb-fig3}
\end{figure}

Let $M = \lfloor{n^a}\rfloor\lfloor{ms}\rfloor$. This is the range
of diagonals $\Delta_i$
we consider.
Then we cut the diagonals off before they exit the $\lfloor{ns}\rfloor
\times\lfloor{nt}\rfloor$ rectangle.
Let $N=N(n)$ be the maximal integer such that $B^M_N$ lies in
$[0, \lfloor{ns}\rfloor]\times[0, \lfloor{nt}\rfloor]$.
Diagonal $\Delta_M$ exits the $\lfloor{ns}\rfloor\times\lfloor
{nt}\rfloor$ rectangle
through the east edge,
and consequently
there exist positive constants $c_m$, $C_m$ such that
%
\begin{eqnarray}\label{Nbounds}
\lfloor{ns}\rfloor- c_m < N\lfloor{ms}\rfloor+ \lfloor {ms}\rfloor
\bigl\lfloor{n^a}\bigr\rfloor&\le&\lfloor{ns}\rfloor\quad
\mbox{and}\nonumber\\[-8pt]\\[-8pt]
\lfloor{nt}\rfloor- C_m n^a < N\lfloor {mt}\rfloor&\le&
\lfloor{nt}\rfloor.\nonumber
\end{eqnarray}

Having defined the cutoff $N$, define the remaining diagonals by
$\Delta^n_i =
\bigcup_{0\le\ell\le N} B^i_\ell$ for $0\le i\le M$. These
diagonals lie
in $[0, \lfloor{ns}\rfloor]\times[0, \lfloor{nt}\rfloor]$.
Fix a path $\pi$ that proceeds horizontally from point $(N\lfloor
{ms}\rfloor,
N\lfloor{mt}\rfloor+1)$ to
$(\lfloor{ns}\rfloor, N\lfloor{mt}\rfloor+1)$ and then vertically
up to $(\lfloor{ns}\rfloor, \lfloor{nt}\rfloor)$.
The number of lattice points on $\pi$ is a constant multiple of $n^a$.

For $0\le i\le M$, let $Z_i$ denote the partition function of paths
$x_\centerdot$
of the following type: $x_\centerdot$ proceeds along the $x$-axis from
the origin
to $(i\lfloor{ms}\rfloor+1, 0)$, enters $\Delta^n_i$ at $(i\lfloor
{ms}\rfloor+1, 1)$, and
stays in $\Delta^n_i$ until
it exits from the upper right corner of $B^i_N$ with a vertical step
that connects it with
$\pi$. After that $x_\centerdot$
follows $\pi$ to $(\lfloor{ns}\rfloor, \lfloor{nt}\rfloor)$. The
number $K$ of points
on $x_\centerdot$ outside $\Delta^n_i$ is independent of $i$ and
bounded by a constant multiple of $ n^a$. Let
\[
X = \min\bigl\{Y_x\dvtx x\in\pi\mbox{ or } x \in\bigl\{
(i,0)\dvtx 0\le i \le M\bigr\} \bigr\} 
\]
be the minimal weight outside $\Delta^n_i$ encountered by any path
$x_\centerdot$
of $Z_i$, for any
$0\le i\le M$.

Let $Z^{\Delta}_i$ be the partition function of all lattice paths in
$\Delta^n_i$ from
the lower left corner of $B^i_0$ to the upper right corner of $B^i_N$. Then
$ 
Z_i \ge X^K Z^{\Delta}_i,
$
and consequently
%
\begin{eqnarray}\label{boundstep1}
\quad\bP\bigl\{ \log Z^{(\theta)}_{\lfloor{ns}\rfloor, \lfloor{nt}\rfloor} \le nr \bigr\}
&\le& \bP \Biggl\{
\log\sum_{i=0}^{M} X^K
Z^{\Delta}_i \le nr \Biggr\}
\nonumber
\\[-2pt]
& = & \bP \Biggl\{ K \log X + \log\sum_{i=0}^{M}
Z^{\Delta}_i \le nr \Biggr\}
\nonumber\\[-9pt]\\[-9pt]
& \le & \bP\{ K \log X \le- n\e\} \nonumber\\[-2pt]
&&{}+ \bP \Biggl\{ \log\sum
_{i=0}^{M} Z^{\Delta}_i \le n(r+
\e) \Biggr\}.\nonumber
\end{eqnarray}
Explicit computation with the gamma distribution and $K\le cn^a$ give
the probability $\bP\{ K \log X \le- n\e\} \le e^{-n^2}$ for large $n$.

The $\{Z^{\Delta}_i\}$ are i.i.d., and $Z^{\Delta}_0$ is a product of
the i.i.d. partition
functions $Z^0_k$ of the individual rectangles $B^0_k$ whose mean was controlled
by (\ref{grainsize}). A~standard large deviation estimate for an
i.i.d. sum
gives
\begin{eqnarray*}
\bP \Biggl\{ \log\sum_{i=0}^{M}
Z^{\Delta}_i \le n(r+\e) \Biggr\} &\le& \bP\bigl\{ \log
Z^{\Delta}_0 \le n(r+\e)\bigr\}^M
\\[-4pt]
&=&\bP \Biggl\{ \sum_{k=0}^{N} \log
Z^0_k \le n(r+\e) \Biggr\}^M
\\[-2pt]
&=&\bP \Biggl\{ \sum_{k=0}^{n/m + o(n)} \log
Z^0_k \le n(r+\e) \Biggr\}^M \\[-2pt]
&\le&
e^{-cnM} \le e^{-c_1n^{1+a}}.
\end{eqnarray*}
Putting these bounds back on line (\ref{boundstep1}) completes the
proof of
(\ref{left-stat}).
\end{pf}\eject

The main work resides in proving the following right tail
result.
%
\begin{proposition}\label{ldp-pr1} Let $(s,t)\in\bR^2$. Then for all $r\in\bR$,
$\rf_{s,t}(r)$ is given by
%
\begin{equation}\label{phiexact}
\rf_{s,t}(r) 
= \sup_{\xi\in[0, \mu)} \Bigl\{ r
\xi-\inf_{\theta\in(\xi, \mu
)} \bigl( t\lmgf_{\theta}(\xi)-s\lmgf_{\mu-\theta}(-
\xi) \bigr) \Bigr\}.
\end{equation}
\end{proposition}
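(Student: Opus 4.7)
Formula \eqref{phiexact} expresses $\rf_{s,t}$ as the convex dual of $F(\xi)=\inf_{\theta\in(\xi,\mu)}\{t\lmgf_\theta(\xi)-s\lmgf_{\mu-\theta}(-\xi)\}$, so I prove the two inequalities $\rf_{s,t}(r)\ge r\xi - [t\lmgf_\theta(\xi)-s\lmgf_{\mu-\theta}(-\xi)]$ (for each $\xi\in[0,\mu)$, $\theta\in(\xi,\mu)$) and $\rf_{s,t}(r)\le \sup_{\xi,\theta}\{r\xi - [t\lmgf_\theta(\xi)-s\lmgf_{\mu-\theta}(-\xi)]\}$ separately. The main tool for both is the stationary log-gamma model of Section~\ref{sec:stat-lg}, which admits tractable Burke-type identities unavailable in the iid model. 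Throughout, write $m=\fl{Ns}$ and $n=\fl{Nt}$.

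For the rate lower bound (upper bound on $\bP$), I would apply Markov's inequality $\bP\{Z_{m,n}\ge e^{Nr}\}\le e^{-Nr\xi}\,\bE[Z_{m,n}^\xi]$ and estimate $\bE[Z_{m,n}^\xi]$ by comparison with the stationary model of parameter $\mu-\theta$. Taking a single vertical term from \eqref{deco-1} (the $\ell=1$ summand) yields the pointwise inequality
\[Z^{(\mu-\theta)}_{m+1,n+1} \;\ge\; V_{0,1}\,Y_{1,1}\,Z_{(1,1),(m+1,n+1)},\]
where the three factors are mutually independent, $V_{0,1}\sim$ Gamma$(\theta)^{-1}$, $Y_{1,1}\sim$ Gamma$(\mu)^{-1}$, and $Z_{(1,1),(m+1,n+1)}\stackrel{d}{=}Z_{m,n}$. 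For $\xi\in[0,\theta)$, taking $\xi$-th moments gives
\[\bE[Z_{m,n}^\xi] \;\le\; e^{-\lmgf_\theta(\xi)-\lmgf_\mu(\xi)}\;\bE\bigl[(Z^{(\mu-\theta)}_{m+1,n+1})^\xi\bigr].\]
The task now reduces to proving $\limsup_N N^{-1}\log\bE[(Z^{(\mu-\theta)}_{m,n})^\xi] \le t\lmgf_\theta(\xi)-s\lmgf_{\mu-\theta}(-\xi)$. This is done through the horizontal/vertical first-step split \eqref{Zhor}--\eqref{Zver}: each piece factorizes as a product of independent boundary weights and a bulk partition function, so applying Chernoff term-by-term in the $U$ or $V$ boundary sums (which are independent of the bulk) produces the required estimate whenever $\theta$ lies in the range making the chosen term dominate the other candidate in the max. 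Supremum over $\xi$ and infimum over $\theta$ then deliver the lower bound on $\rf_{s,t}$.

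For the matching rate upper bound (lower bound on $\bP$), fix an optimizing pair $(\xi^*,\theta^*)$ and construct an event of probability $\ge e^{-N[r\xi^*-F(\xi^*)]+o(N)}$ on which $\log Z_{m,n}\ge Nr$. The construction: arrange iid Cram\'er-scale deviations for the log-sums of the bulk weights along a chosen boundary stripe of the $m\times n$ rectangle, mimicking a tilted-$\theta^*$ stationary boundary with atypical mean, and invoke the typical-direction law of large numbers for the bulk partition function in the complementary rectangle. Independence of stripe from bulk multiplies probabilities, and Lemma~\ref{left-lm}(a) controls the concentration of the bulk so that the engineered boundary deviation is not swamped. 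Matching the probabilistic cost of the stripe deviation against the variational formula for $F(\xi^*)$ gives the desired lower bound on $\bP$.

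\textbf{Main obstacle.} The hardest step is the mgf estimate for $\bE[(Z^{(\mu-\theta)}_{m,n})^\xi]$. The Burke identity $\log Z^{(\mu-\theta)}_{m,n}=\sum_j\log V_{0,j}+\sum_i\log U_{i,n}$ writes the log-partition function as a sum of two iid sums, but these sums are strongly negatively correlated, so the naive independence calculation yields a strictly weaker bound (reflecting $\Vvv \sim n^{2/3}$ rather than $n$). The sharp bound instead comes from splitting the partition function into first-step horizontal and vertical components and estimating each separately; the resulting max of two mgf expressions, after the reparametrization $\theta\leftrightarrow\mu-\theta$, produces the $\inf_\theta$ structure in \eqref{phiexact}. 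An additional subtlety is verifying that the minimizing $\theta$ in \eqref{phiexact} falls in the regime where the chosen max term is indeed dominant, so that the Chernoff bound is tight; this is where the monotonicity properties of $\Psi_0$ recorded at the end of Section~\ref{sec:logg} enter.
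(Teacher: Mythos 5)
Your proposal is a genuinely different route from the paper's (they derive the identity $R_s(r)=\inf_a H^a_{s,t}(r)$ by sandwiching the Burke-iid sum $\sum_i\log U_{i,\fl{nt}}$ between known-rate quantities, then dualize), but the Chernoff half has a concrete gap.

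You want $\limsup_N N^{-1}\log\bE\bigl[(Z^{(\mu-\theta)}_{m,n})^\xi\bigr]\le t\lmgf_\theta(\xi)-s\lmgf_{\mu-\theta}(-\xi)$. That inequality is false in general: Theorem~\ref{La-th-thm} gives $\Lambda_{\mu-\theta,(s,t)}(\xi)=\max\{\,s\lmgf_{\mu-\theta}(\xi)-t\lmgf_\theta(-\xi),\;t\lmgf_\theta(\xi)-s\lmgf_{\mu-\theta}(-\xi)\,\}$, and observation (iii) after \eqref{aux29} shows the \emph{first} term strictly dominates when $s>\alpha_3 t$. So comparing $Z_{m,n}$ to the full stationary $Z^{(\mu-\theta)}$ gives a strictly weaker bound in part of the $(\theta,s,t)$ range, and you cannot just discard that range since the $\inf_\theta$ may be achieved there (the minimizers of the two expressions in Lemma~\ref{critical} are distinct for $s\ne t$). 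Restricting to $Z^{(\mu-\theta),\text{ver}}$ does not rescue this: the decomposition \eqref{Zver} contains the bulk partition functions $Z^{\square}_{(1,\ell),(m,n)}$ whose fractional moments are exactly the unknown you are trying to estimate, and the paper's own computation of $\Lambda^{\text{ver}}$ (\eqref{hor-ldp1}, Lemma~\ref{var-aux-lm}, \eqref{pres-ver2}) is done in terms of $\rf_{s,t}$, so invoking it here would be circular. Your ``apply Chernoff term-by-term in the boundary sums'' step cannot close this loop.

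The fix is to take the Burke ratio as the yardstick rather than $Z^{(\mu-\theta)}$ itself. From \eqref{L-Zdec} one has the pointwise bound $\prod_{i=1}^{m}U_{i,n}\ge\eta_k\,Z^{\square}_{\vvec(k),(m,n)}$ for any fixed $k$; the three factors on the right are independent, the left side is an iid product with explicit $\xi$-th moment $e^{m\lmgf_\theta(\xi)}$, and $\bE[\eta_k^\xi]$ is explicit. Raising to the $\xi$-th power and rearranging yields directly $\bE[Z_{m-1,n-1}^\xi]\le e^{(m+O(1))\lmgf_\theta(\xi)-(n+O(1))\lmgf_{\mu-\theta}(-\xi)}$, i.e.\ the form \eqref{cd2}, which equals \eqref{cd1} by the change of variable $\theta\mapsto\mu+\xi-\theta$ (Lemma~\ref{critical}). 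This is the right version of your idea and is essentially the first inequality of \eqref{UB} read off at the exponential-moment level. Your lower bound on $\bP$ via a tilted boundary stripe is plausible in outline but underspecified; note that the paper avoids any explicit construction by extracting both directions of the rate computation from the single identity $R_s(r)=\inf_a H^a_{s,t}(r)$ and convex duality.
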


Before turning to the proof of Proposition \ref{ldp-pr1} let us
observe how Theorem \ref{FULLldp} follows.
\begin{pf*}{Proof of Theorem \ref{FULLldp}}
Only a few simple observations are required.
Start by defining $I_{s,t}$ as given in (\ref{fdp}). Then formula
(\ref{Jgam})
that connects $I_{s,t}$ and $\rf_{s,t}$ is established by
(\ref{phiexact}) and by knowing that
$\rf_{s,t}(r)=0$ for $r\le\pmu(s,t)$ (Theorem~\ref{J-thm1}). The
regularity properties of
$I_{s,t}$ follow from the general properties of $\rf$ in Theorems
\ref{J-thm1} and \ref{J-thm2}.

The upper large deviation bound (\ref{fu1}) is built into (\ref
{left-5}) and (\ref{J2}).

For the lower large deviation bound (\ref{fu2}), we consider three
cases:

\begin{longlist}[(iii)]
\item[(i)]
If $\pmu(s,t)\in G$, then $\bP\{ n^{-1}\log Z_{\lfloor
{ns}\rfloor, \lfloor{nt}\rfloor} \in G\}\to1$ and
(\ref{fu2}) holds trivially because its right-hand side is $\le0$.

\item[(ii)] If $G\subseteq(-\infty, \pmu(s,t))$,
(\ref{fu2}) holds trivially because its right-hand side is $-\infty$.

\item[(iii)] The remaining case is the one where $G$ contains an interval
$(a,b)\subset(\pmu(s,t), \infty)$. Since the distribution is
continuous including
$a$ into $G$ makes no difference, and so
\begin{eqnarray*}
&&
n^{-1}\log\bP\bigl\{ n^{-1}\log Z_{\lfloor{ns}\rfloor, \lfloor
{nt}\rfloor} \in G
\bigr\}
\\
&&\qquad \ge n^{-1}\log \bigl( \bP\{ \log Z_{\lfloor{ns}\rfloor,
\lfloor{nt}\rfloor} \ge na\} - \bP\{
\log Z_{\lfloor{ns}\rfloor, \lfloor{nt}\rfloor} \ge nb\} \bigr)
\\
&&\qquad \longrightarrow-\rf_{s,t}(a),
\end{eqnarray*}
where the limit follows from (\ref{J2}) and the strict increasingness
of $\rf_{s,t}$
on $[\pmu(s,t), \infty)$
which implies that for large enough $n$,
\[
\bP\{ \log Z_{\lfloor{ns}\rfloor, \lfloor{nt}\rfloor} \ge nb\} \le e^{-n \rf_{s,t}(a)
-n\e}
\]
for some $\e>0$. We can take $a=\inf G\cap(\pmu(s,t), \infty)$ and then
$\rf_{s,t}(a) =\inf_{r\in G\cap(\pmu(s,t), \infty
)}I_{s,t}(r) = \inf_{r\in G} I_{s,t}(r)$.\qed
\end{longlist}
\noqed\end{pf*}

The remainder of the section is devoted to proving Proposition \ref{ldp-pr1}.
Again we begin with the decomposition (\ref{deco}) of the stationary
partition function.
Inside the sums on the right-hand side of (\ref{deco}) we have
partition functions with i.i.d. Gamma$^{-1}(\mu)$-weights
$\{Y_{i,j}\}$
whose large deviations we wish to extract.
But we do not know the large deviations of $\log Z^{(\theta)}_{\lfloor
{ns}\rfloor, \lfloor{nt}\rfloor}$,\vadjust{\goodbreak} so at\vspace*{1pt}
first the decomposition seems unhelpful.
To get around the problem, use
definition (\ref{fractionid}) to write
\[
\log Z^{(\theta)}_{\lfloor{ns}\rfloor, \lfloor{nt}\rfloor} - \log Z^{(\theta)}_{0, \lfloor{nt}\rfloor} =
\sum_{j=1}^{\lfloor{ns}\rfloor} \log U_{i,\lfloor{nt}\rfloor}.
\]
By Proposition \ref{burke-pr} we have a sum of i.i.d.'s on the right,
whose large deviations
we can immediately write down by Cram\'er's theorem. To take advantage
of this,
divide through (\ref{deco}) by $Z^{(\theta)}_{0, \lfloor{nt}\rfloor
}=\prod_{j=1}^{\lfloor{nt}\rfloor} V_{0,j} $
to rewrite it as
%
\begin{eqnarray}
\label{deco1} \prod_{i=1}^{\lfloor{ns}\rfloor}U_{i,\lfloor{nt}\rfloor}
&=&\sum_{\l=1}^{\lfloor{nt}\rfloor} \Biggl( \prod
_{j=\ell+1}^{\lfloor{nt}\rfloor} V_{0,j}^{-1} \Biggr)
Z^{\square
}_{(1,\l),(\lfloor{ns}\rfloor,\lfloor{nt}\rfloor)}
\nonumber\\[-8pt]\\[-8pt]
&&{} +\sum_{k=1}^{\lfloor{ns}\rfloor} \Biggl( \prod
_{j=1}^{\lfloor
{nt}\rfloor} V_{0,j}^{-1} \Biggr)
\Biggl( \prod_{i=1}^{k}U_{i,0}
\Biggr) Z^{\square}_{(k,1),(\lfloor{ns}\rfloor,\lfloor{nt}\rfloor
)}.
\nonumber
\end{eqnarray}
To compactify notation we use a convention where the $y$-axis is
labeled by negative
indices and
introduce these quantities:
%
\begin{equation}\label{sums}
\mbox{for $k\in\bZ$}\qquad \eta_k = \cases{ \displaystyle \prod
_{j=-k+1}^{\lfloor{nt}\rfloor} V_{0,j}^{-1}, &\quad $k
\le0$,
\vspace*{2pt}\cr
\displaystyle \Biggl( \prod_{j=1}^{\lfloor{nt}\rfloor}
V_{0,j}^{-1} \Biggr) \prod_{i=1}^{k}U_{i,0},
&\quad $k \ge 1$,}
\end{equation}
where an empty product equals 1 by definition, and
%
\begin{equation}\label{vk}
\mbox{for $z\in\bR$}\qquad \vvec(z)=\cases{ \bigl(1,\lfloor{-z}\rfloor\bigr),
&\quad $z
\le-1$,
\vspace*{1pt}\cr
(1,1), &\quad $-1<z<1$,
\vspace*{1pt}\cr
\bigl(\lfloor{z}\rfloor,1\bigr), &\quad $z\ge1$.}
\end{equation}
Then (\ref{deco1}) rewrites as
%
\begin{equation}
\label{L-Zdec} \prod_{i=1}^{\lfloor{ns}\rfloor}U_{i,\lfloor{nt}\rfloor}
=\mathop{\sum_{ k = -\lfloor{nt}\rfloor
}}_{ k \neq0}^{\lfloor{ns}\rfloor}
\eta_k Z^{\square}_{\vvec(k),(\lfloor
{ns}\rfloor,\lfloor{nt}\rfloor)}
\end{equation}
from which we extract these inequalities:
%
\begin{eqnarray}
\label{UB}
&&
\log\eta_k + \log Z^{\square}_{\vvec(k),(\lfloor{ns}\rfloor
,\lfloor{nt}\rfloor)} \nonumber\\
&&\qquad\le
\sum_{i=1}^{\lfloor{ns}\rfloor}\log U_{i,\lfloor
{nt}\rfloor}
\\
&&\qquad \le \mathop{\max_{-\lfloor{nt}\rfloor\le k
\le\lfloor{ns}\rfloor
}}_{ k\ne0} \bigl\{\log
\eta_k + \log Z^{\square}_{\vvec
(k),(\lfloor{ns}\rfloor,\lfloor{nt}\rfloor)} \bigr\}
+ \log\bigl(
n(s+t)\bigr).\nonumber
\end{eqnarray}
These inequalities will be the basis for proving Proposition \ref{ldp-pr1}.

We record the right tail rate functions for the random variables in
(\ref{UB}).

For the i.i.d. weights
$\{U_{i, \lfloor{nt}\rfloor}\}$ we have the right branch of the Cram\'
er rate
function
%
\begin{eqnarray}\label{JJ}
R_s(r) &=& -\lim_{n \rightarrow\infty} n^{-1}\log\bP \Biggl\{
\sum_{i=1}^{\lfloor{ns}\rfloor}\log U_{i,\lfloor{nt}\rfloor}\ge nr
\Biggr\}\nonumber\\[-8pt]\\[-8pt]
&=& \cases{ s I_{\theta}\bigl(rs^{-1}\bigr), &\quad $r\ge-s
\Psi_0(\theta)$,
\vspace*{1pt}\cr
0, &\quad $r< -s\Psi_0(\theta)$.}\nonumber
\end{eqnarray}
The rate function $I_{\theta}$ defined by (\ref{JJ}) is given by
%
\begin{equation}\label{logcram!!!}\qquad
I_{\theta}(r) 
=-r\Psi^{-1}_0(-r)-
\log\Gamma\bigl(\Psi^{-1}_0(-r)\bigr)+\theta r+ \log
\Gamma(\theta),\qquad r\in\bR.
\end{equation}

The convex dual of $R_s$ is given by
%
\begin{equation}\label{J*}
R^*_s(\xi) = \cases{ s\log\Gamma(\theta- \xi)-s\log\Gamma(
\theta),&\quad $0\le\xi< \theta$,
\cr
\infty, &\quad $\xi<0$ or $\xi\ge\theta$,}
\end{equation}
and we emphasize that it can be finite only when $\theta> \xi\ge0$.

For real $a \in[-t,s]$,
%
\begin{equation}\label{liadef}
\kappa_a(r)= -\lim_{n\rightarrow\infty}n^{-1}\log\bP\{\log
\eta_{\lfloor{na}\rfloor} \ge nr \}
\end{equation}
exists and is finite, convex and continuous in $r$. (For $a\le0$ it is
simply a Cram\'er rate
function for an i.i.d. sum, and for $a>0$ we can use Lemma~\ref{sumsind}.)
The convex dual is
%
\begin{eqnarray}
\label{l*} \kappa^*_a(\xi) &=& \sup_{r\in\bR}\bigl\{ \xi r -
\kappa_a(r)\bigr\}
\nonumber\\[-8pt]\\[-8pt]
&=& \cases{ (t+a) \bigl(\log\Gamma(\mu-\theta+ \xi) - \log\Gamma(\mu- \theta )
\bigr), \vspace*{2pt}\cr
\qquad\hspace*{22pt}-t\le a\le0, \xi\geq 0,
\vspace*{2pt}\cr
t \bigl(\log\Gamma(\mu-\theta+ \xi) - \log
\Gamma(\mu- \theta ) \bigr)\vspace*{2pt}\cr
\qquad{}+ a \bigl(\log\Gamma(\theta- \xi) - \log \Gamma(
\theta) \bigr),
\vspace*{2pt}\cr
\qquad\qquad\hspace*{0pt} 0< a \le s, 0 \le\xi< \theta,
\vspace*{2pt}\cr
\infty, \hspace*{6pt}\qquad\mbox{otherwise}.}\hspace*{-20pt}\nonumber
\end{eqnarray}
The derivation of (\ref{l*}) is similar to that of (\ref{J*}) from
(\ref{JJ}).
Note that there is a discontinuity in $\kappa_a$ and $\kappa^*_a$ as
$a$ passes through $0$.
The rightmost zero $m_{\kappa,a}$ of $\kappa_a$ is the law of large
numbers limit,
%
\begin{equation}\label{mka}\qquad
m_{\kappa,a}=\lim_{n\to\infty} \frac{\log\eta_{\lfloor
{na}\rfloor}}n =\cases{ (t+a)
\Psi_0(\mu-\theta), &\quad $-t\le a\le0$,
\cr
t\Psi_0(\mu-
\theta)-a\Psi_0(\theta), &\quad $0<a\le s$.}
\end{equation}
In contrast to the functions $\kappa_a$ and $\kappa_a^*$, $m_{\kappa
,a}$ is continuous at $a=0$.
Introduce the ``macroscopic'' version of (\ref{vk}): for real $a$,
%
\begin{equation}\label{vbara}
n^{-1}\vvec(na)\to\vvbar(a)=\cases{ (0,-a),&\quad $-t \le a \le0$,
\cr
(a, 0), &\quad $0\le a \le s$.}
\end{equation}
With this notation we have, again for real $a \in[-t,s]$,
for the partition functions that appear in (\ref{L-Zdec}), the
following large deviations:
%
\begin{equation}\label{J6}
\rf_{(s,t) - \vvbar(a)}(r)= - \lim_{n\to\infty}n^{-1}\log\bP\bigl\{
\log Z^{\square}_{\vvec(na), (\lfloor{ns}\rfloor,\lfloor
{nt}\rfloor)} \ge nr\bigr\}.
\end{equation}
We used Lemma \ref{lm-J10} to take care of the small discrepancy between
$(\lfloor{ns}\rfloor,\lfloor{nt}\rfloor)-\vvec(na)$ and
$\lfloor{n((s,t) - \vvbar(a))}\rfloor$,
unless $a=-t$ or $a=s$ when
this is a case of i.i.d. large deviations, and therefore simpler.

Let $m_{\kappa,a}$ and $m_{\rf, b}$ be the
rightmost zeroes of $\kappa_a$ and $\rf_{(s,t) - \vvbar(b)}$, respectively.
For $(a, b) \in[-t,s]^2$, let
%
\begin{eqnarray}
\label{Hdef} H^{a,b}_{s,t}(r) &=& \lim_{n\to\infty}n^{-1}
\log\bP\bigl\{ \log\eta_{\lfloor{na}\rfloor} + \log Z^{\square}_{\vvec(nb), (\lfloor
{ns}\rfloor,\lfloor{nt}\rfloor)} \ge
nr\bigr\}
\nonumber\\[-8pt]\\[-8pt]
& =&  \cases{ 0,\qquad r < m_{\kappa,a}+ m_{\rf, b},
\vspace*{2pt}\cr
\displaystyle
\inf_{m_{\kappa,a}\le x \le r- m_{\rf, b}}\bigl\{ \kappa_a(x) + \rf_{(s,t) -\vvbar(b)}(r-x)
\bigr\}, \vspace*{2pt}\cr
\hspace*{33.1pt}r \ge m_{\kappa,a}+ m_{\rf, b}.}\nonumber
\end{eqnarray}
The existence of $H^{a,b}_{s,t}(r)$ and the second equality follow
from Lemma \ref{sumsind}. We need some regularity:
%
\begin{lemma}\label{Hcont-lm} Fix $0<s,t<\infty$ and a compact set $K\subseteq\bR$. Then
$H^{a,b}_{s,t}(r)$ is uniformly continuous as a function of $(b,r)\in
[-t,s]\times K$, uniformly
in $a\in[-t,s]$. That is,
%
\begin{equation}\label{Hcont}
\lim_{\delta\searrow0} \mathop{\sup_{a,b,b'\in
[-t,s], r,x \in K: }}_{
\vert b-b'\vert\le\delta, \vert r-x\vert\le\delta} \bigl\vert
H^{a,b}_{s,t}(r) - H^{a,b'}_{s,t}(x)\bigr\vert=0.
\end{equation}
\end{lemma}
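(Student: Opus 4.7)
My plan is to reformulate $H^{a,b}_{s,t}(r)$ as an unconstrained inf-convolution on $\bR$ and then use joint continuity of $\rf$ to absorb perturbations of $(b,r)$ uniformly in $a$. First, extend $\kappa_a$ to $\bR$ by setting $\kappa_a(x)=0$ for $x<m_{\kappa,a}$, and similarly extend $\rf_{(s,t)-\vvbar(b)}$. Since each extended function is nondecreasing past its LLN value (the underlying right-tail probabilities are decreasing in the threshold), a short check shows that the constrained infimum in \eqref{Hdef} equals
\[ H^{a,b}_{s,t}(r)=\inf_{x\in\bR}\bigl\{\kappa_a(x)+\rf_{(s,t)-\vvbar(b)}(r-x)\bigr\}, \]
and when $r\ge m_{\kappa,a}+m_{\rf,b}$ the infimum is attained at some $x^*(a,b,r)\in[m_{\kappa,a},r-m_{\rf,b}]$. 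In the opposite case I will fix $x^*=m_{\kappa,a}$, which makes both summands vanish and matches $H^{a,b}_{s,t}(r)=0$.

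Next I would confine $x^*$ to a compact interval independent of all parameters. By \eqref{mka} the map $a\mapsto m_{\kappa,a}$ is continuous on $[-t,s]$, and by Theorem \ref{substd} together with the continuity of $b\mapsto\vvbar(b)$, so is $b\mapsto m_{\rf,b}=p((s,t)-\vvbar(b))$; both are therefore bounded on $[-t,s]$. Consequently there exist constants $A<B$ and $C<D$, depending only on $s,t,\theta,\mu,K$, such that $x^*\in[A,B]$ and $r-x^*\in[C,D]$ uniformly in $a,b\in[-t,s]$ and $r\in K$. Moreover, since $s,t>0$, the set $\{(s,t)-\vvbar(b):b\in[-t,s]\}$ is a compact subset of $\bR^2_+\setminus\{\mathbf 0\}$.

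The core comparison then reads: using $x^*=x^*(a,b,r)$ as a test point in the inf-convolution at parameters $(a,b',r')$ gives
\[ H^{a,b'}_{s,t}(r')-H^{a,b}_{s,t}(r)\le\rf_{(s,t)-\vvbar(b')}(r'-x^*)-\rf_{(s,t)-\vvbar(b)}(r-x^*), \]
and the symmetric test-point argument yields the opposite one-sided bound. The crucial point is that the $\kappa_a(x^*)$ terms cancel, so $a$ has disappeared from the right-hand side. By Theorem \ref{J-thm1}, $\rf$ is jointly continuous on $(\bR^2_+\setminus\{\mathbf 0\})\times\bR$, hence uniformly continuous on the compact product $\{(s,t)-\vvbar(b):b\in[-t,s]\}\times[C,D]$. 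Combined with uniform continuity of $\vvbar$, this sends the right-hand side to $0$ as $|b-b'|+|r-r'|\to 0$ at a rate independent of $a$ and of $x^*\in[A,B]$, which is precisely \eqref{Hcont}.

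I expect the main obstacle to be the transition across the threshold $r=m_{\kappa,a}+m_{\rf,b}$, where the interval of optimizers collapses and $x^*$ could in principle jump as $(b,r)$ is perturbed. Committing to the single choice specified above (constrained optimizer when the interval is nonempty, $x^*=m_{\kappa,a}$ otherwise) and verifying by inspection that both $H^{a,b}_{s,t}(r)$ and its test-point upper bound vanish at the threshold eliminates this discontinuity cleanly, so the final estimate proceeds without any additional case analysis.
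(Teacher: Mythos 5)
Your proof is correct and follows essentially the same route as the paper: exploit the explicit inf-convolution formula \eqref{Hdef}, confine the variable $x$ to a compact interval uniformly over $(a,b,r)$ by bounding $m_{\kappa,a}$ and $m_{\rf,b}$, and invoke joint (hence uniform on compacts) continuity of $\rf$. The test-point cancellation of $\kappa_a$ that you spell out is exactly what makes the estimate uniform in $a$, and is the step the paper leaves implicit; your observation that the unconstrained inf-convolution form removes any case analysis at the threshold $r=m_{\kappa,a}+m_{\rf,b}$ is a clean and accurate way to close that gap.
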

\begin{pf} This follows from the explicit formula in (\ref{Hdef}).
First, we have the joint
continuity $(b,r)\mapsto\rf_{(s,t) -\vvbar(b)}(r)$ from Theorem \ref
{J-thm2}.
Second, we argue that $x$ in the infimum can be restricted to a single
compact set
simultaneously for $(a,b,r)\in[-t,s]^2\times K$. That $m_{\kappa,a}$
is bounded
is evident from (\ref{mka}). To show that the upper bound $r- m_{\rf
,b}$ of $x$
is bounded above, we need to show a lower bound on $m_{\rf,b}=\pmu
((s,t)-\vvec(b))$.
A lower bound on the free energy is easy: by discarding all but a
single path,
\[
\pmu\bigl((s,t)-\vvec(b)\bigr) = \lim_{n\to\infty}n^{-1} \log
Z^{\square
}_{\lfloor{n((s,t) - \vvbar(b))}\rfloor} \ge-\bigl(s+t-\vert b\vert\bigr)\Psi_0(\mu).
\]
\upqed
\end{pf}

We abbreviate $H^a_{s,t}(r)= H^{a,a}_{s,t}(r)$.

The unknown rate functions $\rf_{s,t}$ are now inside (\ref{Hdef}), while
the other rates $R_s$ and $\kappa_a$ we know explicitly. The next
lemma is the
counterpart of (\ref{UB}) in terms of rate functions.
%
\begin{lemma}\label{decomp}
Let $s,t > 0$ and $r\in\bR$. Then
%
\begin{equation}\label{Jdefi}
R_s(r) =\inf_{-t\le a \le s }H^a_{s,t}(r).
\end{equation}
\end{lemma}
\begin{pf}
For any $a \in[-t,s]$, by the first inequality of (\ref{UB}),
%
\begin{eqnarray}\label{bo}
-R_s(r)&=&\lim_{n \rightarrow\infty}n^{-1}\log\bP \Biggl\{ \sum
_{i=1}^{\lfloor{ns}\rfloor} \log U_{i,\lfloor{nt}\rfloor} \ge nr
\Biggr\}
\nonumber
\\
&\ge&\lim_{n \rightarrow\infty} n^{-1} \log\bP \bigl\{ \log
\eta_{\lfloor{na}\rfloor} + \log Z^{\square}_{\vvec(na),(\lfloor
{ns}\rfloor,\lfloor{nt}\rfloor)} \ge nr \bigr\}
\\
&\ge& - H^{a}_{s,t}(r).
\nonumber
\end{eqnarray}
Supremum over $a\in[-t,s]$ on the right gives $\le$ in (\ref{Jdefi}).

To get $\ge$ in (\ref{Jdefi}) we use the second inequality of (\ref
{UB}) together
with a partitioning argument. Let $\e> 0$.
Note this technical point about handling the errors of the
partitioning. With $B, \delta>0$,
Chebyshev's inequality and
the l.m.g.f. of (\ref{lmgfom}) give the bound
%
\begin{equation}\label{Ybd1}\qquad
\bP \Biggl\{ \sum_{i=1}^{\lfloor{n\delta}\rfloor} \log
Y_{i,1} \le -n\e \Biggr\} \le e^{ -nB
(\e- B^{-1}\delta\log({\Gamma(\mu+B)}/{\Gamma
(\mu)})  )} \le e^{ -B\e n/2},
\end{equation}
where the second inequality comes from choosing $\delta=\delta(\e,
B)$ small enough.
The right tail for $\log Y$ does not give such a bound with an
arbitrarily large $B$.
Consequently we arrange the errors so that they can be bounded as above.

Given $B>0$, fix a small enough $\delta> 0$ and let $-t = a_0 < a_1 <
\cdots< a_q=0 < \cdots< a_m = s$ be a partition of the interval
$-[t,s]$ so that $|a_{i+1}-a_i| < \delta$.
We illustrate how a term with index $k$ from the right-hand side of
(\ref{UB}) is reduced to
a term involving only partition points.
Consider the
case $a_{i} \ge0$ and let $\lfloor{na_i}\rfloor\le k\le\lfloor
{na_{i+1}}\rfloor$:
%
\begin{eqnarray}\label{tiny}
&&
\bP \bigl\{ \log\eta_{k} + \log Z^{\square}_{\vvec(k),(\lfloor
{ns}\rfloor,\lfloor{nt}\rfloor)} \ge
nr \bigr\}
\nonumber
\\
&&\qquad\le\bP \Biggl\{ \log\eta_{\lfloor{na_{i+1}}\rfloor} + \log Z^{\square
}_{\vvec(na_i),(\lfloor{ns}\rfloor,\lfloor{nt}\rfloor)}
\nonumber
\\
&&\hspace*{12pt}\qquad\quad{} -\sum_{j=k+1}^{\lfloor{na_{i+1}}\rfloor
}\log U_{j,0} -
\sum_{j=\lfloor{na_i}\rfloor}^{k-1}\log Y_{j,1} \ge nr
\Biggr\}
\nonumber\\
&&\qquad\le\bP \bigl\{ \log\eta_{\lfloor{na_{i+1}}\rfloor} + \log Z^{\square}_{\vvec
(na_i),(\lfloor{ns}\rfloor,\lfloor{nt}\rfloor)}
\ge n(r-\e) \bigr\}
\\
&&\qquad\quad{} + \bP \Biggl\{ -\sum_{j=k+1}^{\lfloor
{na_{i+1}}\rfloor}\log
U_{j,0} -\sum_{j=\lfloor{na_i}\rfloor}^{k-1} \log
Y_{j,1} \ge n\e \Biggr\}
\nonumber
\\
&&\qquad\le\bP \bigl\{ \log\eta_{\lfloor{na_{i+1}}\rfloor} + \log Z^{\square}_{\vvec
(na_i),(\lfloor{ns}\rfloor,\lfloor{nt}\rfloor)}
\ge n(r-\e) \bigr\}\nonumber\\
&&\qquad\quad{} + e^{-B \e n/2} .\nonumber
\end{eqnarray}

On the other hand, if
$a_i<0$ and $\lfloor{-na_{i+1}}\rfloor< -k\le\lfloor{-na_i}\rfloor
$, then we would develop
as follows:
\begin{eqnarray*}
&&\log\eta_k + \log Z^{\square}_{\vvec(k),(\lfloor{ns}\rfloor
,\lfloor{nt}\rfloor)}
\\
&&\qquad\le\log\eta_{\lfloor{na_i}\rfloor} -\sum_{j=-k+1}^{-\lfloor
{na_i}\rfloor}
\log V_{0,j} + \log Z^{\square}_{\vvec(na_{i+1}),(\lfloor{ns}\rfloor,\lfloor
{nt}\rfloor)} \\
&&\qquad\quad{}- \sum
_{j=\lfloor{-na_{i+1}}\rfloor\vee1}^{-k-1} \log Y_{1,j}
\end{eqnarray*}
and get the same bound as on line (\ref{tiny}) but with $a_{i}$ and
$a_{i+1}$ switched
around.

Now for $\ge$ in (\ref{Jdefi}). Assume $n$ is large enough so that
$n\e> \log(ns+nt)$. Starting
from (\ref{UB}),
\begin{eqnarray*}
&&
n^{-1}\log\bP \Biggl\{ \sum_{i=1}^{\lfloor{ns}\rfloor}
\log U_{i,\lfloor{nt}\rfloor} \ge nr \Biggr\}
\\
&&\qquad\le\mathop{\max_{-\lfloor{nt}\rfloor\le k \le\lfloor
{ns}\rfloor}}_{ k\ne0} n^{-1} \log\bP \bigl
\{ \log\eta_{k} + \log Z^{\square}_{\vvec(k),(\lfloor
{ns}\rfloor,\lfloor{nt}\rfloor)} \ge n(r - \e)
\bigr\}
\\
&&\qquad\quad{} +n^{-1}\log(ns+nt)
\\
&&\qquad\le \max_{0\le i \le q-1}n^{-1} \log \bigl(\bP \bigl\{ \log
\eta_{\lfloor
{na_{i}}\rfloor} + \log Z^{\square}_{\vvec(na_{i+1}),(\lfloor{ns}\rfloor,\lfloor
{nt}\rfloor)} \ge n(r-2\e ) \bigr\}
\\
&&\hspace*{248.5pt}\qquad\quad{} + e^{-B \e n/2} \bigr)
\\
&&\qquad\quad{} \vee\max_{q\le i \le m-1}n^{-1} \log \bigl(\bP \bigl\{
\log \eta_{\lfloor{na_{i+1}}\rfloor} + \log Z^{\square}_{\vvec
(na_i),(\lfloor{ns}\rfloor,\lfloor{nt}\rfloor)}\\
&&\qquad\quad\hspace*{196pt} \ge n(r-2\e)
\bigr\}
\\
&&\hspace*{207.2pt}\qquad\quad{} + e^{-B \e n/2} \bigr) + \e.
\end{eqnarray*}
Take $n \to\infty$ above to obtain
\begin{eqnarray*}
-R_s(r)&\le& \Bigl\{ \max_{0\le i \le q-1} \bigl(-H^{a_{i}, a_{i+1}}_{s,t}(r-2
\e ) \bigr) \vee (-B\e/2 ) \Bigr\}
\\
&&{} \vee \Bigl\{ \max_{q\le i \le m-1} \bigl(-H^{a_{i+1},
a_i}_{s,t}(r-2
\e) \bigr) \vee (-B\e/2 ) \Bigr\} + \e
\\
&\le&\sup_{a,b\in[-t,s]\dvtx \vert a-b\vert\le\delta} \bigl(-H^{a,b}_{s,t}(r-2\e) \bigr)
\vee (-B\e/2 ) + \e.
\end{eqnarray*}
We first let $\delta\searrow0$, and by Lemma \ref{Hcont-lm} the
bound above becomes
\[
-R_s(r)\le\sup_{a\in[-t,s]} \bigl(-H^{a,a}_{s,t}(r-2
\e) \bigr) \vee (-B\e/2 ) + \e.
\]
Next we take $B\nearrow\infty$, and finally $\e\searrow0$ with
another application of
Lemma \ref{Hcont-lm}. This establishes $\ge$ in (\ref{Jdefi}).
\end{pf}

A key analytic trick will be to look at the dual $ \rf_{(t,t)-\vvbar
(a)}^*(\xi)$
of the right tail rate
as a function of $a$. This lemma will be helpful.
%
\begin{lemma}\label{Gisconvex}
For a fixed $\xi\in[0, \mu)$, the function
%
\begin{equation}
G_{\xi}(a) = \cases{ - \rf_{(t,t)-\vvbar(a)}^*(\xi),&\quad $a\in[0,t]$,
\cr
\infty, &\quad$a<0$ or $a>t$,}
\end{equation}
is continuous on $[0,t]$, and convex and lower semi-continuous on $\bR
$. In particular, $G^{**}_{\xi}(a) = G_{\xi}(a)$
for $a\in\bR$.
\end{lemma}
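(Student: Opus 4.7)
The plan is to exploit the conjugation rewriting
\[
G_\xi(a) = \inf_{r\in\bR}\bigl\{\rf_{(t-a,t)}(r) - r\xi\bigr\}, \qquad a\in[0,t].
\]
Once convexity and lower semi-continuity on $\bR$ are verified, the identity $G_\xi^{**}=G_\xi$ is immediate from the Fenchel--Moreau theorem (see \cite{rock-ca}), so the substantive claims are (i) convexity of $G_\xi$ on $[0,t]$ and (ii) continuity of $G_\xi$ on $[0,t]$, after which lsc on $\bR$ is automatic via the $+\infty$ extension outside $[0,t]$.

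Convexity on $[0,t]$ is immediate from Theorem \ref{J-thm1}: $(\uvec,r)\mapsto\rf_\uvec(r)$ is jointly convex on $(\bR_+^2\setminus\{\mathbf 0\})\times\bR$, and the affine substitution $a\mapsto(t-a,t)$ makes $(a,r)\mapsto\rf_{(t-a,t)}(r)-r\xi$ jointly convex on $[0,t]\times\bR$, so the marginal (infimum over $r$) is convex in $a$. The dual form of the same computation yields the parallel statement that $\uvec\mapsto\rf^*_\uvec(\xi)$ is \emph{concave} on $\bR_+^2\setminus\{\mathbf 0\}$: applying joint convexity to $\rf_{\lambda\uvec_1+(1-\lambda)\uvec_2}(\lambda r_1+(1-\lambda)r_2)$ and supremising over $r_1,r_2$ gives
\[
\rf^*_{\lambda\uvec_1+(1-\lambda)\uvec_2}(\xi) \ge \lambda\rf^*_{\uvec_1}(\xi)+(1-\lambda)\rf^*_{\uvec_2}(\xi).
\]

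For continuity, the open interval $(0,t)$ is handled by convexity alone, leaving only the endpoints. Upper semi-continuity at both endpoints is easy: $\rf^*_\uvec(\xi)=\sup_r\{r\xi-\rf_\uvec(r)\}$ is a supremum of jointly continuous functions of $\uvec$ (Theorem \ref{J-thm1}), hence lsc in $\uvec$, so $G_\xi=-\rf^*$ is usc in $a$. At $a=0$ the matching lower bound is automatic because $(t,t)$ is an interior point of $\bR_+^2$ and a concave function is continuous on the interior of its effective domain.

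The main obstacle is lsc at $a=t$, where $\uvec=(t-a,t)\to(0,t)$ approaches the boundary. I would establish $\limsup_{s\downarrow 0}\rf^*_{(s,t)}(\xi)\le\rf^*_{(0,t)}(\xi)$ through a direct moment bound. Applying the power--mean inequality (or subadditivity of $x\mapsto x^\xi$ in the case $\xi\le 1$) to $Z_{(\fl{ns},\fl{nt})}^\xi$, viewed as an $|\Pi_{(\fl{ns},\fl{nt})}|$--term sum raised to the power $\xi$,
\[
\bE\bigl[Z_{(\fl{ns},\fl{nt})}^\xi\bigr]\le |\Pi_{(\fl{ns},\fl{nt})}|^{\xi\vee 1}\cdot\bE[Y^\xi]^{\fl{ns}+\fl{nt}}.
\]
Taking $n^{-1}\log$ and passing to the limit, the path--count contributes the asymptotic entropy $(s+t)\log(s+t) - s\log s - t\log t$, which vanishes as $s\downarrow 0$, while the moment contribution tends to $tM_\mu(\xi)=\rf^*_{(0,t)}(\xi)$. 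This supplies the desired upper bound, yielding continuity of $G_\xi$ on $[0,t]$; lsc on $\bR$ and the biconjugation identity $G_\xi^{**}=G_\xi$ then follow immediately from the $+\infty$ extension and Fenchel--Moreau.
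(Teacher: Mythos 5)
Your proposal is correct and shares the same global architecture as the paper's argument (convexity of $G_\xi$ from joint convexity of $\rf$, continuity on $(0,t)$ from convexity, the $+\infty$ extension giving lsc on $\bR$, Fenchel--Moreau for $G_\xi^{**}=G_\xi$, and a path-counting bound at the endpoint $a=t$), but it differs in a genuine way at the endpoint $a=0$. Where the paper invokes Varadhan's theorem to write $\rf^*_{t,t}(\xi)$ as a limiting l.m.g.f.\ and splits off a horizontal segment to get $\rf^*_{t,t}(\xi)\ge\rf^*_{t-a,t}(\xi)+a\log\bE Y^\xi$, you instead observe that $\uvec\mapsto\rf^*_\uvec(\xi)$ is concave (the dual of the paper's own convexity computation) and that $(t,t)$ is an interior point of $\bR_+^2$, so continuity there is automatic. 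This is a cleaner and more self-contained route, since it bypasses Varadhan at that endpoint. Two points you should make explicit to close small gaps: (i) the concavity argument at $a=0$ requires $\rf^*_\uvec(\xi)<\infty$ in a neighborhood of $(t,t)$ so that the concave function is proper; your path-counting moment bound does supply this for every $(s,t)$, but you only invoke it in the $a=t$ paragraph, so note that the same bound (together with the Chebyshev inequality $\rf^*_{s,t}(\xi)\le\Lambda_{s,t}(\xi)$) furnishes the needed finiteness at $(t,t)$; and (ii) at $a=t$ the passage from the moment bound to an upper bound on $\rf^*_{(s,t)}(\xi)$ again uses the inequality $\rf^*_{s,t}(\xi)\le\Lambda_{s,t}(\xi)$, which follows from exponential Chebyshev and does not need the full Varadhan equality, so it is worth saying this explicitly rather than writing ``passing to the limit''.
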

\begin{pf} To show convexity on $[0,t]$, let $\lambda\in(0,1)$ and
$a = \lambda a_1+(1-\lambda)a_2$:
%
\begin{eqnarray}\label{Gconv}\quad
&&
- \rf_{(t,t)-\vvbar(a)}^*(\xi) \nonumber\\
&&\qquad= -\sup_{r\in\bR}\bigl\{\xi r -
\rf_{(t,t)-\vvbar(a)}(r)\bigr\}
\nonumber
\\
&&\qquad=\inf_{r\in\bR}\bigl\{ \rf_{t-a,t}(r)-\xi r\bigr\}
\nonumber
\\
&&\qquad\le\inf_{r\in\bR} \mathop{\inf_{(r_1,r_2): }}_{ \lambda r_1 +
(1-\lambda)r_2=r} \bigl\{
\lambda\bigl( \rf_{t-a_1,t}(r_1)-\xi r_1\bigr)+(1-
\lambda) \bigl( \rf_{t-a_2,t}(r_2)-\xi r_2\bigr)
\bigr\}
\\
&&\qquad= \inf_{(r_1,r_2)\in\bR^2}\bigl\{\lambda\bigl( \rf_{t-a_1,t}(r_1)-
\xi r_1\bigr)+(1-\lambda) \bigl( \rf_{t-a_2,t}(r_2)-
\xi r_2\bigr)\bigr\}
\nonumber
\\
&&\qquad=\lambda\inf_{r_1 \in\bR}\bigl\{ \rf_{t-a_1,t}(r_1)-\xi
r_1\bigr\} +(1-\lambda) \inf_{r_2\in\bR}\bigl\{
\rf_{t-a_2,t}(r_2)-\xi r_2\bigr\}
\nonumber
\\
&&\qquad=-\lambda\rf_{t-a_1,t}^*(\xi)-(1-\lambda)
\rf_{t-a_2,t}^*(\xi).\nonumber
\end{eqnarray}
The inequality comes from the convexity of $ \rf$ in the variable $(t-a,t,r)$.

For finiteness on $[0,t]$ it is now enough to show that $G_{\xi}(a)$
is finite at the endpoints.
Continuity then follows in the interior $(0,t)$.
First take $a=t$. Then $\rf_{0,t}^*$ is the dual of a Cram{\'e}r rate
function, and for $\xi\ge0$
%
\begin{equation}\label{va}
G_{\xi}(t) = - \rf_{0,t}^*(\xi) = - t\log\bE
e^{ \xi\log Y_{1,0} },
\end{equation}
which is finite for $\xi<\mu$.\eject

Convexity of $ \rf_{s,t}(r)$ and symmetry $ \rf_{s,t}(r)= \rf_{t,s}(r)$ imply $ \rf_{t,t}(r) \le\rf_{0,2t}(r)$.

From this
%
\begin{eqnarray}
G_{\xi}(0) &=& - \rf_{t,t}^*(\xi) = \inf_{r\in\bR}\bigl
\{ \rf_{t,t}(r) - \xi r\bigr\}
\nonumber\\[-8pt]\\[-8pt]
&\le&\inf_{r\in\bR}\bigl\{ \rf_{0,2t}(r)-\xi r \bigr\} = -
\rf_{0,2t}^*(\xi ) < \infty.\nonumber
\end{eqnarray}

\textit{Continuity at $a=0$}.
To show that $G_{\xi}$ is also continuous at the endpoints, we first
obtain a lower bound. For any $r \in\bR$,
\[
\rf^{*}_{t-a,t}(\xi) \ge r\xi- \rf_{t-a,t}(r)
\]
hence, by continuity of $\rf_{s,t}$ in the $(s,t)$ argument,
%
\begin{equation}
\varliminf_{a\to0}\rf^{*}_{t-a,t}(\xi) \ge r\xi-
\rf_{t,t}(r).
\end{equation}
Supremum over $r$ gives $\varliminf_{a\to0}\rf^{*}_{t-a,t}(\xi) \ge
\rf^{*}_{t,t}(\xi)$.

For the upper bound, let $0<a<t$.
Varadhan's theorem (Theorem 4.3.1 in \cite{demb-zeit}) applies in the
present setting.
This is justified in the proof of Corollary \ref{lmgf-thm1} below and
another similar
justification is given for (\ref{supcond}) below.
Consequently,
%
\begin{eqnarray}
\rf^{*}_{t,t}(\xi)&=& \lim_{n\to\infty}n^{-1}
\log\bE e^{\xi\log
Z_{\lfloor{nt}\rfloor,\lfloor{nt}\rfloor}}
\nonumber
\\
&\ge&\lim_{n\to\infty}n^{-1}\log\bE e^{\xi\log Z_{\lfloor
{n(t-a)}\rfloor,\lfloor{nt}\rfloor}}
\nonumber\\[-8pt]\\[-8pt]
&&{} + \lim_{n\to\infty}n^{-1}\log\bE e^{\xi\sum_{i=\lfloor{n(t-a)}\rfloor+1}^{\lfloor{nt}\rfloor}\log
Y_{i, \lfloor{nt}\rfloor}
}
\nonumber
\\
&=&\rf_{t-a,t}^*(\xi)+ a \log\bE Y^{\xi}.\nonumber
\end{eqnarray}
Taking $a \searrow0$ yields continuity at $a=0$.\vspace*{8pt}

\textit{Continuity at $a=t$}.
The lower bound follows as in the previous case. For the upper bound we
use a path counting argument. Let $e^{n F(s,t)}$ be an upper bound on
the number of paths in
$\Pi_{\lfloor{ns}\rfloor, \lfloor{nt}\rfloor} $ such that $F(0+,t)=0$.
Consider first the case where $0\le\xi<1$.
Then
%
\begin{eqnarray}
\label{aux411} \rf_{t-a,t}^*(\xi) &=& \lim_{n\to\infty}n^{-1}
\log\bE \Biggl(\sum_{x_\centerdot\in\Pi_{(\lfloor{n(t-a)}\rfloor, \lfloor{nt}\rfloor
)} }\prod
_{i=1}^{\lfloor{nt}\rfloor+\lfloor{n(t-a)}\rfloor} Y_{x_i} \Biggr)^\xi
\nonumber
\\
&\le&\lim_{n\to\infty}n^{-1}\log\sum_{x_\centerdot\in\Pi
_{(\lfloor{n(t-a)}\rfloor, \lfloor{nt}\rfloor)} }
\prod_{i=1}^{\lfloor{nt}\rfloor+\lfloor{n(t-a)}\rfloor}\bE (Y)^\xi
\\
&=& F(t-a,t) + (2-a/t)\rf_{0,t}^*(\xi).
\nonumber
\end{eqnarray}
For $1\le\xi<\mu$, Jensen's inequality yields
%
\begin{equation}\label{aux412}
\rf_{t-a,t}^*(\xi)\le\xi F(t-a,t) +(2-a/t)\rf_{0,t}^*(\xi).
\end{equation}
Let $a\nearrow t$ to get the continuity.

$G^{**}_{\xi} = G_{\xi}$ is a consequence of convexity and lower
semicontinuity, by
\cite{rock-ca}, Theorem 12.2.
\end{pf}
\begin{pf*}{Proof of Proposition \ref{ldp-pr1}}
The remainder of the proof is
convex analysis. The goal is to derive the following formula for
the right tail rate function $\rf_{s,t}$:
%
\begin{equation}\label{J17}
\rf_{s,t}(r) 
= \sup_{\xi\in[0, \mu)} \Bigl\{ r
\xi-\inf_{\theta\in(\xi, \mu
)} \bigl( t\lmgf_{\theta}(\xi)-s\lmgf_{\mu-\theta}(-
\xi) \bigr) \Bigr\}.
\end{equation}

We begin by expressing the
explicitly known dual $R^{*}_s(\xi)$ from (\ref{J*}) in terms of the
unknown function
$\rf_{(s,t)-\vvbar(a) }$.
Equation (\ref{Hdef}) says that $H^a_{s,t}$ is the infimal convolution
of $\kappa_a$
and $\rf_{(s,t)-\vvbar(a)}$, in symbols
$
H^a_{s,t}= \kappa_{a}\square\rf_{(s,t)-\vvbar(a) }.
$
By Theorem~16.4 in \cite{rock-ca} addition is dual to infimal convolution.
Starting with (\ref{Jdefi}) we have
%
\begin{eqnarray}
\label{aux69} R^{*}_s(\xi) 
& = &
\sup_{-t\le a\le s}\sup_{r\in\bR} \bigl\{ r\xi- ( \kappa_{a}
\square\rf_{(s,t)-\vvbar(a) }) (r) \bigr\}
\nonumber
\\
& = &\sup_{-t\le a\le s} ( \kappa_{a}\square\rf_{(s,t)-\vvbar(a)
})^*(
\xi)
\\
& = &\sup_{-t\le a\le s} \bigl\{ \kappa_{a}^*(\xi)+
\rf_{(s,t)-\vvbar(a)}^*(\xi) \bigr\}.
\nonumber
\end{eqnarray}
Combining this with (\ref{J*}) gives, for $ 0 \le\xi< \theta$,
%
\begin{equation}\label{aux7}
s\log\Gamma(\theta- \xi)-s\log\Gamma(\theta) = \sup_{-t\le a\le
s} \bigl\{
\kappa_{a}^*(\xi)+ \rf_{(s,t)-\vvbar(a) }^*(\xi) \bigr\}.
\end{equation}

Now regard $\xi\in[0, \mu)$ fixed, and let $\theta\in(\xi, \mu)$
vary. Introduce
temporary definitions
%
\begin{equation}\label{wow}
u_a(\theta) = \cases{ -\hks{(\theta)}=\lmgf_{\mu-\theta}(-\xi) =
\log\Gamma(\mu -\theta+ \xi) - \log\Gamma(\mu- \theta),
\vspace*{2pt}\cr
\hspace*{213pt}-t\le a\le0,
\vspace*{2pt}\cr
d_{\xi}{(\theta)}=\lmgf_{\theta}(\xi) = \log\Gamma(\theta- \xi )
- \log\Gamma(\theta), \qquad 0< a\le s.}\hspace*{-28pt}
\end{equation}
Substitute (\ref{l*}) and (\ref{wow}) into equation (\ref{aux7}) to get
%
\begin{equation}\label{almost}
s\log\frac{\Gamma(\theta- \xi)}{ \Gamma(\theta)} -t\log\frac
{\Gamma(\mu-\theta+ \xi) }{ \Gamma(\mu- \theta)}= \sup_{-t\le
a\le s} \bigl\{
au_a(\theta)+ \rf_{(s,t)-\vvbar(a) }^*(\xi) \bigr\}.\hspace*{-34pt}
\end{equation}
The right-hand side begins to resemble a convex dual, and will allow us
to solve for $\rf_{s,t}$.
We can specialize to the case $s=t$ because $(t,t)-\vvbar(a)$ gives
all the pairs
$(s,t)$ with $0\le s\le t$.
When $s=t$, the $\rf_{s,t}=\rf_{t,s}$ symmetry allows us to write
(\ref{almost}) as
\[
t\bigl(\dks(\theta)+\hks(\theta)\bigr)= \sup_{0\le a\le t} \bigl\{
a \bigl(\hks(\theta)\vee\dks(\theta) \bigr)+ \rf_{t-a,t}^*(\xi) \bigr\},
\]
and it splits into cases as follows:
\[
t\bigl(\dks(\theta)+\hks(\theta)\bigr)=
\cases{ \displaystyle \sup_{0\le a\le t} \bigl\{ a
\hks(\theta)+ \rf_{t-a,t}^*(\xi) \bigr\}, &\quad$\theta\in\bigl[(\mu+\xi)/2,
\mu\bigr)$,
\vspace*{2pt}\cr
\displaystyle \sup_{0\le a\le t} \bigl\{ a \dks(\theta)+ \rf_{t-a,t}^*(\xi)
\bigr\}, &\quad$\theta\in\bigl(\xi, (\mu+\xi)/2\bigr]$.}
\]
We can discard one of the branches above. For if $\theta'=\mu+\xi
-\theta$, then
$\dks(\theta')=\hks(\theta)$, and we see that the two equations
given by the two branches
are in fact equivalent. So we restrict to the case
$\theta\in[(\mu+\xi)/2,\mu)$ and continue with
%
\begin{equation}\label{almost2}
t\bigl(\dks(\theta)+\hks(\theta)\bigr)= \sup_{0\le a\le t} \bigl\{ a \hks (
\theta)+ \rf_{t-a,t}^*(\xi) \bigr\}.
\end{equation}
The function $\hks$ is strictly increasing, so we can change variables via
$v=\hks(\theta)$ between the intervals $\theta\in[(\mu+\xi)/2,\mu
)$ and
$v\in[\hks((\mu+ \xi)/2),\infty)$.
Recall also $G_{\xi}(a) = - \rf_{t-a,t}^*(\xi)$ from Lemma \ref
{Gisconvex}. This turns (\ref{almost2}) into
%
\begin{eqnarray}\label{almost3}
t\bigl(\bigl(d_{\xi}\circ\hks^{-1}\bigr) (v)+ v\bigr)&=&
\sup_{0\le a\le t} \bigl\{ av- G_{\xi
}(a) \bigr\} \nonumber\\[-8pt]\\[-8pt]
&=&
G^*_{\xi}(v),\qquad \hks\biggl(\frac{\mu+ \xi}2\biggr)\le v <
\infty.\nonumber
\end{eqnarray}

Utilizing $G_\xi=G_\xi^{**}$, we get the following expression for the
rate function~$\rf$:
%
\begin{eqnarray}\label{aux10}\qquad
\rf_{t-a,t}(r) &=& \sup_{\xi\in[0,\mu)}\bigl\{r\xi- \rf_{t-a,t}^*(
\xi )\bigr\} 
= \sup_{\xi\in[0,\mu)}\bigl\{r\xi+ G_{\xi}(a)
\bigr\}
\\
&=& \sup_{\xi\in[0,\mu)} \Bigl\{r\xi+ \sup_{v\in\bR} \bigl[ av-
G^*_{\xi}(v) \bigr] \Bigr\}
\nonumber
\\
&=& \sup_{\xi\in[0,\mu)} \Bigl\{r\xi+
\sup_{v\in[\hks((\mu+ \xi
)/2),\infty)} \bigl[ av- G^*_{\xi}(v) \bigr] \Bigr\}
\nonumber
\\
\label{aux11}
&=& \sup_{\xi\in[0,\mu)} \Bigl\{r\xi+ \sup_{v\in[\hks((\mu+ \xi
)/2),\infty)} \bigl[ (a-t)v- t\dks
\bigl(\hks^{-1}(v)\bigr) \bigr] \Bigr\}.
\end{eqnarray}
In the next to last equality above, we restricted the supremum over $v$
to the interval
$v\in[\hks((\mu+ \xi)/2),\infty)$. This is justified because
$G_\xi^*$ is convex,
$a\ge0$ and from (\ref{almost3}) we can compute the right derivative
$(G^{*}_{\xi})' (\hks(\frac{\mu+ \xi}2)+ )=0$. The
restriction of the
supremum then allows us to replace $G^*_{\xi}(v)$ with (\ref{almost3}).

The proof is complete. In the case $0<s\le t$, take $a=t-s$ on line
(\ref{aux10}).
Line (\ref{aux11})
is the desired representation for $\rf_{s,t}$. It turns into (\ref
{J17}) by the
$v$ to $\theta$ change of
variable. The case $s>t$ follows from the symmetry
$ \rf_{s,t}(r)= \rf_{t,s}(r)$.
\end{pf*}

The next lemma makes explicit the formula(s) for $\rf_{s,t}^*$ that
were implicit
in the proof of Proposition \ref{ldp-pr1}.
%
\begin{lemma}\label{critical}
Let $s, t\ge0$ and $\xi\in[0,\mu)$. Then
%
\begin{eqnarray}\label{cd1}
\rf_{s,t}^*(\xi) &=& \inf_{\rho\in(\xi,\mu)} \bigl\{ t
\lmgf_{\rho
}(\xi)-s\lmgf_{\mu-\rho}(-\xi) \bigr\}
\\
\label{cd2}
&=& \inf_{\theta\in(\xi,\mu)} \bigl\{ s\lmgf_{\theta}(\xi)-t
\lmgf_{\mu-\theta}(-\xi) \bigr\}.
\end{eqnarray}
\end{lemma}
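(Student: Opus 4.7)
My plan is to extract \eqref{cd1} directly from the derivation of $\rf_{s,t}$ already carried out in the proof of Proposition \ref{ldp-pr1}, and then to obtain \eqref{cd2} and the complementary range $s>t$ from the elementary symmetry $\rf_{s,t}^*(\xi)=\rf_{t,s}^*(\xi)$ (which follows from reflecting lattice paths across the diagonal, using that the weights are i.i.d.).

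First, I would read \eqref{aux11} as a Fenchel representation $\rf_{s,t}(r)=\sup_\xi\{r\xi-\rf_{s,t}^*(\xi)\}$, with $a=t-s\ge 0$ so that $(a-t)=-s$. The inner supremum over $v$ then identifies
\[
\rf_{s,t}^*(\xi)=\inf_{v\ge \hks((\mu+\xi)/2)}\bigl\{sv+t\,\dks(\hks^{-1}(v))\bigr\}\qquad(s\le t).
\]
Substituting $v=\hks(\theta)$ with $\theta\in[(\mu+\xi)/2,\mu)$ and using $\hks(\theta)=-\lmgf_{\mu-\theta}(-\xi)$ together with $\dks(\theta)=\lmgf_\theta(\xi)$ from \eqref{wow}, this rewrites as
\[
\rf_{s,t}^*(\xi)=\inf_{\theta\in[(\mu+\xi)/2,\,\mu)}\bigl\{t\lmgf_\theta(\xi)-s\lmgf_{\mu-\theta}(-\xi)\bigr\}\qquad(s\le t).
\]
To extend the infimum to the full interval $(\xi,\mu)$ claimed in \eqref{cd1}, I note that the integrand $f(\theta)=t\lmgf_\theta(\xi)-s\lmgf_{\mu-\theta}(-\xi)$ tends to $+\infty$ at both endpoints (since $\log\Gamma$ blows up at $0^+$), so the minimum is interior, and a short computation gives
\[
f'\bigl((\mu+\xi)/2\bigr)=(s-t)\bigl[\Psi_0((\mu+\xi)/2)-\Psi_0((\mu-\xi)/2)\bigr],
\]
which is $\le 0$ when $s\le t$ since $\Psi_0$ is strictly increasing. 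Hence the minimum of $f$ on $(\xi,\mu)$ lies in $[(\mu+\xi)/2,\mu)$ and extending the range does not change the infimum, giving \eqref{cd1} for $s\le t$.

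For the remaining case $s>t$ and for formula \eqref{cd2}, I would apply the symmetry $\rf_{s,t}^*(\xi)=\rf_{t,s}^*(\xi)$: using the display above with the roles of $s$ and $t$ swapped handles $s>t$ in \eqref{cd1} and simultaneously yields \eqref{cd2} for both orderings. The derivative computation at the midpoint then has the opposite sign, so for $s>t$ the infimum is attained in $(\xi,(\mu+\xi)/2]$, still consistent with taking the infimum in \eqref{cd1} over the full range $(\xi,\mu)$. I expect no real obstacle: all analytic work has been done in the proof of Proposition \ref{ldp-pr1}, and the only bookkeeping point is precisely the midpoint derivative check that lets us upgrade the restricted range $[(\mu+\xi)/2,\mu)$ coming from the symmetrization step \eqref{almost2} to the full interval $(\xi,\mu)$ appearing in the statement.
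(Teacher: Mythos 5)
Your proposal is correct and follows essentially the paper's route: both arguments extract $\rf^*_{s,t}$ from the convex-duality machinery already set up in the proof of Proposition~\ref{ldp-pr1}, specifically the identity \eqref{almost3} together with the restriction of the supremum to $v\ge\hks((\mu+\xi)/2)$, which after the change of variables $v=\hks(\theta)$ yields an infimum over $\theta\in[(\mu+\xi)/2,\mu)$. The paper starts from $G_\xi=G^{**}_\xi$ supplied by Lemma~\ref{Gisconvex} rather than reading the Fenchel representation off \eqref{aux11} directly, but this is the same computation in slightly different clothing. Both reduce to $s\le t$: you by the path-reflection symmetry $\rf^*_{s,t}=\rf^*_{t,s}$, the paper by first noting that \eqref{cd1} and \eqref{cd2} are interchanged under $\rho=\mu+\xi-\theta$, which makes the right-hand side visibly symmetric in $(s,t)$ --- equivalent bookkeeping.

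The one genuinely useful thing you add is the explicit verification that the infimum over the restricted range $[(\mu+\xi)/2,\mu)$ coincides with the infimum over the full interval $(\xi,\mu)$ appearing in \eqref{cd1}; the paper's proof stops at the change of variables and leaves this step silent. Your midpoint derivative computation
$f'\bigl((\mu+\xi)/2\bigr)=(s-t)\bigl[\Psi_0((\mu+\xi)/2)-\Psi_0((\mu-\xi)/2)\bigr]\le 0$ for $s\le t$
is correct. One small point to tighten: coercivity at the endpoints plus $f'\le 0$ at one interior point does not by itself force the minimizer into the right half --- a non-unimodal $f$ could dip lower on the left. What rescues the argument is that $f(\theta)=t\lmgf_\theta(\xi)-s\lmgf_{\mu-\theta}(-\xi)$ is convex in $\theta$: indeed
\[
f''(\theta)=t\bigl[\Psi_1(\theta-\xi)-\Psi_1(\theta)\bigr]+s\bigl[\Psi_1(\mu-\theta)-\Psi_1(\mu-\theta+\xi)\bigr]\ge 0
\]
since $\Psi_1$ is decreasing. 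Convexity together with $f'((\mu+\xi)/2)\le 0$ forces $f$ to be non-increasing on $(\xi,(\mu+\xi)/2]$, which is exactly what you need. Make the convexity explicit and the argument is airtight.
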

\begin{pf}
(\ref{cd2}) comes from (\ref{cd1})
by the change of variable $\rho= \mu+ \xi-\theta$.
Comparison of the two shows that we can assume $s\le t$. To
prove (\ref{cd1}) for $s\le t$, start from Lemma \ref{Gisconvex}:
\[
\rf^*_{s,t}(\xi) = - G_{\xi}(t-s) = - G_{\xi}^{**}(t-s)
= - \sup_{v\in\bR} \bigl\{ (t-s)v- G^*_{\xi}(v)\bigr\}.
\]
Restrict the supremum as in (\ref{aux10}) and (\ref{aux11}), substitute
in (\ref{almost3}) and change variables from $v$ to $\theta=\hks^{-1}(v)$.
\end{pf}
\begin{pf*}{Proof of Corollary \ref{lmgf-thm1}}
If $\xi\ge\mu$,
\[
\xi\log Z_{\lfloor{ns}\rfloor, \lfloor{nt}\rfloor} \ge\sum_j \xi \log
Y_{x_j}
\]
for any particlar path
$x_\centerdot\in\Pi_{\lfloor{ns}\rfloor,\lfloor{nt}\rfloor} $,
and then $\Lambda_{s,t}(\xi)=\infty$ comes from $M_\mu(\xi)=\infty
$ from (\ref{lmgfom}).

Let $\xi<\mu$. Pick $\gamma>1$ such that $\gamma\xi<\mu$. Then the
bound
\[
\sup_n n^{-1} \log\bE e^{ \gamma\xi\log Z_{\lfloor{ns}\rfloor,
\lfloor{nt}\rfloor}} < \infty
\]
follows from path counting, as in (\ref{aux411}) and (\ref{aux412}).
This bound is sufficient for Varadhan's theorem
(Theorem 4.3.1 in \cite{demb-zeit}) which gives
\begin{eqnarray*}
\lim_{n\to\infty} \Lambda_{s,t}(\xi)&=&n^{-1} \log\bE
e^{ \xi\log Z_{\lfloor
{ns}\rfloor, \lfloor{nt}\rfloor}} = I^*_{s,t}(\xi) = \sup_{r\in\bR} \bigl\{ r\xi-
I_{s,t}(r) \bigr\}
\\
& = & \sup_{r\ge\pmu(s,t)} \bigl\{ r\xi- I_{s,t}(r) \bigr\}=
\sup_{r\ge\pmu
(s,t)} \bigl\{ r\xi- \rf_{s,t}(r) \bigr\}.
\end{eqnarray*}
We discarded $\{I_{s,t}=\infty\}=\{r< \pmu(s,t)\}$ from the supremum.
Since $I_{s,t}$ increases for $r\ge\pmu(s,t)$, the case $\xi\le0$ of
(\ref{finpieces}) follows. For $\xi\ge0$ the values $\rf_{s,t}(r)=0$ for
$r< \pmu(s,t)$ can be put back in because they do not alter the supremum.
Consequently
$\Lambda_{s,t}(\xi)=\rf_{s,t}^*(\xi)$ for $\xi\ge0$. Lemma \ref
{critical} completes
the proof of this corollary.
\end{pf*}

There is nothing new in the proof of Corollary \ref{free-rate},
so we omit it.

\section{Proofs for the stationary log-gamma model}
\label{secpf2}

In this section we prove the results of Section \ref{secstat-lg}.
\begin{pf*}{Proof of Theorem \ref{hor-thm}}
Coarse-graining arguments and simple error bounds
readily give the following limit:
\begin{eqnarray*}
p^{{(\theta), \mathrm{hor}}}(s,t)&=&\lim_{n\to\infty} n^{-1} \log
Z^{(\theta), \mathrm{hor}}_{\lfloor{ns}\rfloor, \lfloor{nt}\rfloor
}
\\
&=& \lim_{n\to\infty} \max_{1\le k\le\lfloor{ns}\rfloor} \Biggl( n^{-1} \sum
_{i=1}^{k}\log U_{i,0} +
n^{-1}\log Z^{\square}_{(k,1),(\lfloor{ns}\rfloor,\lfloor
{nt}\rfloor)} \Biggr)
\\
&=& \sup_{0\le a\le s}\bigl\{ -a\Psi_0(\theta) +\pmu(s-a,t) \bigr
\}
\\
&=& \sup_{0\le a\le s} \inf_{0 < \rho< \mu}\bigl\{ -a\Psi_0(
\theta) +(a-s)\Psi_0(\rho) - t\Psi_0(\mu-\rho)\bigr\}.
\end{eqnarray*}
In the last step we substituted in (\ref{pmu}).
Formula (\ref{pres-hor1}) follows from this by some calculus.

From the definition (\ref{Zhor}) of $Z^{(\theta), \mathrm
{hor}}_{\lfloor{ns}\rfloor,\lfloor{nt}\rfloor}$, follow\vspace*{2pt}
inequalities analogous
to (\ref{UB}), and then with arguments like those in the proof of
Lemma \ref{decomp},
we derive a right tail LDP
%
\begin{eqnarray}\label{hor-ldp1}
&&
\lim_{n\to\infty} n^{-1}\log\bP\bigl\{ Z^{(\theta), \mathrm
{hor}}_{\lfloor{ns}\rfloor,\lfloor{nt}\rfloor}
\ge nr\bigr\}\nonumber\\[-8pt]\\[-8pt]
&&\qquad = -\rf_{\theta,\mathrm{hor}}(r)=-\inf_{a\in[0,s]} (R_a
\square\rf_{s-a,t}) (r),\nonumber
\end{eqnarray}
where $R_a$ is the rate function from (\ref{JJ}).
For $\xi\ge0$ the l.m.g.f. in (\ref{Lhor})
satisfies $\Lambda^{\mathrm{hor}}_{\theta, (s,t)}(\xi) = \rf_{\theta,\mathrm{hor}}^*(\xi)$.
This would be a consequence of Varadhan's theorem if we had a full LDP,
but now
we have to justify this separately, and we do so in Lem\-ma~\ref
{var-aux-lm} below. 1
Proceeding as in (\ref{aux69}) and using (\ref{cd2}),
\begin{eqnarray*}
\Lambda^{\mathrm{hor}}_{\theta, (s,t)}(\xi) &=& \sup_{a\in[0,s]}
\bigl(R_a^*(\xi) +\rf_{s-a,t}^*(\xi) \bigr)
\\
&=& \sup_{a\in[0,s]} \inf_{\rho\in(\xi,\mu)} \bigl\{ a\lmgf_\theta(
\xi)+ (s-a)\lmgf_{\rho}(\xi)-t\lmgf_{\mu-\rho}(-\xi) \bigr\}.
\end{eqnarray*}
Formula (\ref{pres-hor2}) follows from some calculus. The sup and inf
can be interchanged
by a minimax theorem (see, e.g., \cite{kass}), and this makes the
calculus easier.
\end{pf*}
%
\begin{lemma}\label{var-aux-lm}
Let $Z^{(\theta),\mathrm{hor}}_{\lfloor{ns}\rfloor,\lfloor
{nt}\rfloor}$ the partition\vspace*{1pt}
function given by (\ref{Zhor}), and let $\rf_{\theta,\mathrm{
hor}}(r)$ as given by (\ref{hor-ldp1}). Then for $0\le\xi< \theta$,
\[
\lim_{n\to\infty} n^{-1}\log\bE e^{\xi\log Z^{(\theta),\mathrm
{hor}}_{\lfloor{ns}\rfloor,\lfloor{nt}\rfloor}} =
\sup_{r\in
\bR}\bigl\{ r\xi- \rf_{\theta,\mathrm{hor}}(r) \bigr\} =
\rf^*_{\theta,\mathrm{hor}}(\xi).
\]
\end{lemma}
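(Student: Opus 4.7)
The second equality is simply the definition of the convex dual, so the content is the first. Since $\xi \ge 0$, the test function $r\mapsto r\xi$ is non-decreasing, so asymptotically only the right tail of $n^{-1}\log Z^{(\theta),\text{\rm hor}}_{\fl{ns},\fl{nt}}$ matters, and the right-tail LDP from \eqref{hor-ldp1} should suffice, with a modification of the standard Varadhan argument. The plan is a two-sided estimate.

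\textbf{Lower bound.} For each $r\in\bR$,
\be
\bE e^{\xi \log Z^{(\theta),\text{\rm hor}}_{\fl{ns},\fl{nt}}} \ge e^{n\xi r}\,\bP\{\log Z^{(\theta),\text{\rm hor}}_{\fl{ns},\fl{nt}}\ge nr\},
\nn\ee
so \eqref{hor-ldp1} gives $\varliminf_n n^{-1}\log\bE e^{\xi \log Z^{(\theta),\text{\rm hor}}_{\fl{ns},\fl{nt}}}\ge r\xi-\rf_{\theta,\text{\rm hor}}(r)$. Taking the supremum over $r$ produces $\ge\rf^{*}_{\theta,\text{\rm hor}}(\xi)$.

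\textbf{Exponential moment control.} The crucial preparatory step for the upper bound: since $\xi<\theta$, pick $\gamma>1$ with $\gamma\xi<\theta$. Expand $(Z^{(\theta),\text{\rm hor}}_{\fl{ns},\fl{nt}})^{\gamma\xi}$ by the same path-counting bound used in \eqref{aux4.11}--\eqref{aux4.12}, treating the case $\gamma\xi\ge 1$ by Jensen as in \eqref{aux4.12} and the case $\gamma\xi<1$ by subadditivity of $x\mapsto x^{\gamma\xi}$. The boundary weights $U_{i,0}$ have $U^{-1}\sim$ Gamma$(\theta)$ and the bulk weights $Y_{i,j}$ have $Y^{-1}\sim$ Gamma$(\mu)$ with $\mu>\theta>\gamma\xi$, so both $\bE U^{\gamma\xi}$ and $\bE Y^{\gamma\xi}$ are finite. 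This yields a constant $C<\infty$ with
\be
\sup_{n} n^{-1}\log \bE e^{\gamma\xi \log Z^{(\theta),\text{\rm hor}}_{\fl{ns},\fl{nt}}} \le C.
\label{momplan}
\nn\ee

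\textbf{Upper bound via partitioning.} Fix $\eps>0$ and choose $M$ large enough that $C/\gamma-(\gamma-1)\xi M/\gamma<\rf^{*}_{\theta,\text{\rm hor}}(\xi)-1$. Partition $[-M,M]$ into intervals $[r_i,r_{i+1})$ of length $\eps$ and write
\be
\bE e^{\xi \log Z^{(\theta),\text{\rm hor}}_{\fl{ns},\fl{nt}}} = A_{-}+ \sum_i A_i + A_{+},
\nn\ee
where $A_{-}=\bE[e^{\xi\log Z};\, \log Z< -nM]$, $A_{+}=\bE[e^{\xi\log Z};\, \log Z\ge nM]$, and each $A_i$ restricts to $nr_i\le \log Z<nr_{i+1}$. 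Bound $A_{-}\le e^{-n\xi M}$ and (by Hölder or the one-line Markov estimate $e^{\xi L}\mathbf 1\{L\ge nM\}\le e^{-n(\gamma-1)\xi M}e^{\gamma\xi L}$) $A_{+}\le e^{-n(\gamma-1)\xi M}\bE e^{\gamma\xi\log Z}$; the moment bound then forces $n^{-1}\log A_{+}\le -(\gamma-1)\xi M+C$, which is strictly less than $\rf^{*}_{\theta,\text{\rm hor}}(\xi)$ by the choice of $M$. For the middle pieces,
\be
A_i \le e^{n\xi r_{i+1}}\bP\{\log Z^{(\theta),\text{\rm hor}}_{\fl{ns},\fl{nt}}\ge nr_i\},
\nn\ee
and \eqref{hor-ldp1} yields $\varlimsup_n n^{-1}\log A_i\le \xi r_{i+1}-\rf_{\theta,\text{\rm hor}}(r_i) \le \rf^{*}_{\theta,\text{\rm hor}}(\xi)+\xi\eps$. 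Combining the three pieces and letting $n\to\infty$, then $\eps\searrow 0$, then $M\nearrow\infty$ gives $\varlimsup_n n^{-1}\log\bE e^{\xi\log Z^{(\theta),\text{\rm hor}}_{\fl{ns},\fl{nt}}}\le \rf^{*}_{\theta,\text{\rm hor}}(\xi)$.

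The main obstacle is the exponential moment bound: one must check that the path-counting argument of \eqref{aux4.11}--\eqref{aux4.12} goes through for the horizontal stationary partition function, which hinges on the fact that the boundary $U$-weights (the ones with the smaller parameter $\theta$) still admit finite $\gamma\xi$-moments whenever $\gamma\xi<\theta$. Once this is in hand, the rest is the standard Laplace-principle argument adapted to a one-sided LDP, made legitimate by the monotonicity of $r\mapsto r\xi$ for $\xi\ge 0$.
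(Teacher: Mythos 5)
Your plan is correct and follows essentially the same route as the paper: a one-line probability estimate for the lower bound, a uniform exponential moment bound $\sup_n n^{-1}\log\bE\, e^{\gamma\xi\log Z^{(\theta),\text{hor}}_{\fl{ns},\fl{nt}}}<\infty$ for some $\gamma>1$ with $\gamma\xi<\theta$ via the path-counting argument of \eqref{aux4.11}--\eqref{aux4.12} (crucially using that $\gamma\xi<\theta<\mu$ keeps both the boundary $U$-moments and the bulk $Y$-moments finite), and a Laplace partitioning of the range for the upper bound. Your only cosmetic variation is controlling the upper tail by the pointwise Markov domination $e^{\xi L}\mathbf 1\{L\ge nM\}\le e^{-n(\gamma-1)\xi M}e^{\gamma\xi L}$, whereas the paper applies H\"older's inequality to combine the moment bound with the right-tail LDP; both yield the required tail-negligibility \eqref{T-F}.
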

\begin{pf} Let $0<\xi<\theta$.
Set
\[
\underline\gamma= \varliminf_{n\to\infty} n^{-1}\log\bE
e^{\xi
\log Z^{(\theta),\mathrm{hor}}_{\lfloor{ns}\rfloor,\lfloor
{nt}\rfloor} } \quad\mbox{and}\quad \overline\gamma= \varlimsup_{n\to\infty}
n^{-1}\log\bE e^{\xi
\log Z^{(\theta),\mathrm{hor}}_{\lfloor{ns}\rfloor,\lfloor
{nt}\rfloor} }.
\]
First we have an exponential Chebyshev argument for a lower bound:
\[
n^{-1}\log\bP\bigl\{ \log Z^{(\theta), \mathrm{hor}}_{\lfloor
{ns}\rfloor,\lfloor{nt}\rfloor} \ge nr
\bigr\} \le-\xi r + n^{-1}\log\bE e^{\xi\log Z^{(\theta), \mathrm
{hor}}_{\lfloor{ns}\rfloor,\lfloor{nt}\rfloor} }.
\]
Letting $n\to\infty$ along a suitable subsequence gives
$
\underline{\gamma} \ge\xi r - \rf_{\theta,\mathrm{hor}}(r)
$
for all $r\in\bR$. Thus $\underline{\gamma}\ge\rf_{\theta
,\mathrm{hor}}^*(\xi)$ holds.

For the upper bound we claim that
%
\begin{equation}\label{T-F}
\lim_{r \to\infty}\varlimsup_{n\to\infty}n^{-1}\log\bE
\bigl(e^{\xi\log Z^{(\theta), \mathrm{hor}}_{\lfloor{ns}\rfloor,\lfloor
{nt}\rfloor} }\mathbf {1}\bigl\{ \log Z^{(\theta), \mathrm{hor}}_{\lfloor{ns}\rfloor,\lfloor
{nt}\rfloor}
\ge nr\bigr\} \bigr)= - \infty.
\end{equation}

Assume for a moment that (\ref{T-F}) holds. To establish the upper
bound let $\delta>0$ and partition $\bR$ with $r_{i}= i\delta$,
$i\in\bZ$:
%
\begin{eqnarray}\label{v1}
&&
n^{-1}\log\bE \bigl( e^{\xi\log Z^{(\theta), \mathrm
{hor}}_{\lfloor{ns}\rfloor,\lfloor{nt}\rfloor} } \bigr) 
\nonumber\\
&&\qquad\le n^{-1}\log \Biggl[ \sum
_{i=-m}^{m} e^{n \xi r_{i+1}}\bP\bigl\{ \log
Z^{(\theta), \mathrm{hor}}_{\lfloor{ns}\rfloor,\lfloor{nt}\rfloor
} \ge nr_i\bigr\}
\\
&&\qquad\quad\hspace*{37pt}{} + e^{n\xi r_{-m}} + \bE \bigl(e^{\xi\log
Z^{(\theta), \mathrm{hor}}_{\lfloor{ns}\rfloor,\lfloor{nt}\rfloor
} }\mathbf{1}\bigl\{ \log
Z^{(\theta), \mathrm{hor}}_{\lfloor{ns}\rfloor,\lfloor{nt}\rfloor
} \ge nr_m\bigr\} \bigr) \Biggr].\nonumber
\end{eqnarray}
By (\ref{T-F}), for each $M>0$ there exists $m=m(M)$ so that
\[
n^{-1}\log\bE \bigl(e^{\xi\log Z^{(\theta), \mathrm{hor}}_{\lfloor
{ns}\rfloor,\lfloor{nt}\rfloor} }\mathbf{1}\bigl\{ \log
Z^{(\theta),
\mathrm{hor}}_{\lfloor{ns}\rfloor,\lfloor{nt}\rfloor} \ge nr_{m}\bigr\} \bigr) < - M.
\]
%
A limit along a suitable subsequence in (\ref{v1}) yields
\begin{eqnarray*}
\overline{\gamma} &\le& \max_{-m\le i \le m}\bigl\{ \xi r_{i+1} -
\rf_{\theta,\mathrm{hor}}(r_i)\bigr\}\vee\xi r_{-m}\vee(-M)
\\
&\le& \Bigl(\sup_{r\in\bR}\bigl\{ \xi r - \rf_{\theta,\mathrm{hor}}(r)\bigr\} +
\xi\delta \Bigr)\vee\xi r_{-m}\vee(-M).
\end{eqnarray*}
The proof of the lemma follows by letting $\delta\to0$, $m\to\infty
$ and $M\to\infty$.

Now to show (\ref{T-F}). Note that there exists $\alpha>1$ such that
$\alpha\xi< \theta$,
%
\begin{equation}\label{supcond}
\sup_{n} \bigl( \bE e^{\alpha\xi\log Z^{(\theta), \mathrm
{hor}}_{\lfloor{ns}\rfloor,\lfloor{nt}\rfloor} } \bigr)^{1/n} <
\infty.
\end{equation}
To see this, distinguish cases where $\alpha\xi<1$ or otherwise. Let
$N$ denote the number of paths, and recall that $N \le e^{cn}$ for some $c>0$:
For $\alpha\xi<1$,
\begin{eqnarray*}
\bigl(\bE e^{\alpha\xi\log Z^{(\theta), \mathrm{hor}}_{\lfloor
{ns}\rfloor,\lfloor{nt}\rfloor} } \bigr)^{1/n} &=& \Biggl(\bE \Biggl[ \Biggl(
\sum_{x \in\Pi_{(\lfloor{ns}\rfloor
,\lfloor{nt}\rfloor)}} \prod_{i=1}^{\lfloor{ns}\rfloor+\lfloor{nt}\rfloor}
Y_{x_j} \Biggr)^{\alpha\xi} \Biggr] \Biggr)^{1/n}
\\
&\le& \Biggl(N\prod_{i=1}^{\lfloor{nt}\rfloor+\lfloor{ns}\rfloor}\bE
Y^{\alpha\xi} \Biggr)^{1/n} \le e^{c}
\lmgf_{\theta}(\alpha\xi)^{t+s}.
\end{eqnarray*}
For $\alpha\xi\ge1$, Jensen's inequality gives
\begin{eqnarray*}
\bigl(\bE e^{\alpha\xi\log Z^{(\theta), \mathrm{hor}}_{\lfloor
{ns}\rfloor,\lfloor{nt}\rfloor} } \bigr)^{1/n} &=& \Biggl(\bE \Biggl[ \Biggl(
\sum_{x \in\Pi_{(\lfloor{ns}\rfloor
,\lfloor{nt}\rfloor)}} \prod_{i=1}^{\lfloor{ns}\rfloor+\lfloor{nt}\rfloor}
Y_{x_j} \Biggr)^{\alpha\xi} \Biggr] \Biggr)^{1/n}
\\
&\le& \Biggl(N^{\alpha\xi}\prod_{i=1}^{\lfloor{nt}\rfloor+\lfloor
{ns}\rfloor}
\bE Y^{\alpha
\xi} \Biggr)^{1/n}\e e^{c\alpha\xi}
\lmgf_{\theta}(\alpha\xi )^{t+s}.
\end{eqnarray*}
To show (\ref{T-F}), use H{\"o}lder's inequality,
\begin{eqnarray*}
&&
n^{-1}\log\bE \bigl(e^{\xi\log Z^{(\theta), \mathrm{hor}}_{\lfloor
{ns}\rfloor,\lfloor{nt}\rfloor}}\mathbf{1}\bigl\{ \log
Z^{(\theta),
\mathrm{hor}}_{\lfloor{ns}\rfloor,\lfloor{nt}\rfloor} \ge nr\bigr\} \bigr)
\\
&&\qquad\le\alpha^{-1}\log\sup_{n} \bigl(\bE e^{\alpha\xi\log Z^{(\theta),
\mathrm{hor}}_{\lfloor{ns}\rfloor,\lfloor{nt}\rfloor} }
\bigr)^{1/n}
\\
&&\qquad\quad{} + {(\alpha-1)} {\alpha^{-1}}n^{-1}\log\bP\bigl\{ \log
Z^{(\theta), \mathrm{hor}}_{\lfloor{ns}\rfloor,\lfloor
{nt}\rfloor} \ge nr\bigr\} .
\end{eqnarray*}
Taking a limit $n\to\infty$, we conclude
%
\begin{equation}
\varlimsup_{n\to\infty}n^{-1}\log\bE \bigl(e^{\xi\log Z^{(\theta
), \mathrm{hor}}_{\lfloor{ns}\rfloor,\lfloor{nt}\rfloor} }\mathbf
{1}\bigl\{ \log Z^{(\theta),
\mathrm{hor}}_{\lfloor{ns}\rfloor,\lfloor{nt}\rfloor} \ge nr\bigr\} \bigr) \le
C_1-C_2\rf_{\theta,\mathrm{hor}}(r)\hspace*{-25pt}
\end{equation}
for positive constants $C_1, C_2$. Letting $r\to\infty$ finishes the proof
because
\[
\lim_{r\to\infty} \rf_{\theta,\mathrm{hor}}(r)=\infty.
\]
%
\upqed\end{pf}
\begin{pf*}{Proof of Theorem \ref{La-th-thm}} We can assume
$0<\xi<\theta\wedge(\mu-\theta)$ because otherwise the boundary
variables alone
force the l.m.g.f. to blow up.

Let us record the counterpart of
(\ref{pres-hor2}) for $Z^{(\theta), \mathrm{hor}}_{\lfloor
{ns}\rfloor,\lfloor{nt}\rfloor}$. Condition (\ref{trans2}) becomes
%
\begin{equation}\label{trans3}
t \bigl(\Psi_0(\mu-\theta)-\Psi_0(\mu-\theta-\xi)
\bigr)\ge s \bigl(\Psi_0(\theta+\xi)-\Psi_0(\theta)
\bigr).
\end{equation}
The conclusion becomes that the limit in (\ref{Lver}) exists and is
given by
%
\begin{equation}\label{pres-ver2}
\Lambda^{\mathrm{ver}}_{\theta, (s,t)}(\xi) = \cases{ t\lmgf_{\mu-\theta}(
\xi)- s\lmgf_{\theta}(-\xi), &\quad if (\ref{trans3}) holds,
\cr
\Lambda_{t,s}(\xi)= \Lambda_{s,t}(\xi), &\quad if (\ref{trans3})
fails.}
\end{equation}

The logarithmic limits lead to the formula
%
\begin{equation}\label{aux27}
\Lambda_{\theta, (s,t)}(\xi) = \Lambda^{\mathrm
{hor}}_{\theta, (s,t)}(\xi) \vee
\Lambda^{\mathrm{ver}}_{\theta, (s,t)}(\xi),
\end{equation}
and we need to justify that
this is the same as
the maximum in (\ref{La-th}). This comes from several observations:

\begin{enumerate}[(iii)]
\item[(i)] $\Lambda_{s,t}(\xi)=\rf_{s,t}^*(\xi)$ is always bounded above
by the first branches of both
(\ref{pres-hor2}) and (\ref{pres-ver2}). This is evident from
equations (\ref{cd1}) and (\ref{cd2}).

\item[(ii)]
Conditions (\ref{trans2}) and (\ref{trans3}) together define three
ranges for $(s,t)$:
\begin{enumerate}[(a)]
\item[(a)] (\ref{trans2}) and (\ref{trans3}) both hold if and only
if $\alpha_1 t\le s\le\alpha_2 t$;
\item[(b)] (\ref{trans2}) holds and (\ref{trans3}) fails if and only
if $s> \alpha_2 t$;
\item[(c)] (\ref{trans2}) fails and (\ref{trans3}) holds if and only
if $s< \alpha_1 t$.
\end{enumerate}
The constants $0<\alpha_1<\alpha_2$ can be read off (\ref{trans2})
and (\ref{trans3}),
and the strict inequalities are justified by the strict concavity of
$\Psi_0$.

\item[(iii)] In the maximum in (\ref{La-th}), we have
%
\begin{equation}\label{aux28}
s\lmgf_{\theta}(\xi)- t\lmgf_{\mu-\theta}(-\xi) \ge t
\lmgf_{\mu-\theta}(\xi)- s\lmgf_{\theta}(-\xi)
\end{equation}
if and only if $s\ge\alpha_3 t$ for a constant $\alpha_3>0$ that can
be read off from above.
Strict concavity of $\Psi_0$ implies that $0<\alpha_1<\alpha_3<\alpha_2$.

Now we argue that
%
\begin{equation}\label{aux29}
\Lambda_{\theta, (s,t)}(\xi) = \max \bigl\{ s\lmgf_{\theta}(\xi)- t
\lmgf_{\mu-\theta}(-\xi), t\lmgf_{\mu-\theta}(\xi)- s\lmgf_{\theta}(-
\xi) \bigr\}.
\end{equation}
This is clear in case (a) as this maximum is exactly $ \Lambda^{\mathrm{hor}}_{\theta, (s,t)}(\xi) \vee
\Lambda^{\mathrm{ver}}_{\theta, (s,t)}(\xi)$. In case (b), $\Lambda^{\mathrm{hor}}_{\theta, (s,t)}(\xi)$
equals\vspace*{1pt} the left-hand side of (\ref{aux28}) which dominates both the
right-hand side
of (\ref{aux28}) and $\Lambda_{s,t}(\xi)$. Consequently in case (b) also
(\ref{aux27}) is the same as (\ref{aux29}).
Case (c) is symmetric to (b). This completes the proof of (\ref{aux29}).

With one additional observation we can verify Remark \ref{La-rem5}.
Namely, $\Lambda_{s,t}(\xi)$ is in fact
\textit{strictly} bounded above by the first branch of either
(\ref{pres-hor2}) or (\ref{pres-ver2}).
The claim is easily verifiable when either of conditions (b) or (c) are
in effect. To see the strict domination when (a) holds, note that
the unique minimizers in formulas (\ref{cd1}) and (\ref{cd2})
are linked by $\rho= \mu+ \xi-\theta$.
But if these formulas matched both first branches in (\ref{pres-hor2})
and (\ref{pres-ver2}),
the connection would have to be $\rho= \mu-\theta$.
This together with (\ref{aux29}) implies that $\Lambda_{s,t}(\xi)<
\Lambda_{\theta, (s,t)}(\xi)$
for all $\theta\in(0,\mu)$.\qed
\end{enumerate}
\noqed\end{pf*}

\section*{Acknowledgments}

We would like to thank an anonymous referee for a thorough reading of
the original manuscript and suggestions that improved the presentation
of this article.



\printaddresses

\end{document}